\NewDocumentCommand{\mgn}{}{\mathcal M_{g,n}}
\NewDocumentCommand{\tgn}{}{\mathcal T_{g,n}}
\NewDocumentCommand{\mgnb}{}{\overline{\mathcal M}_{g,n}}
\NewDocumentCommand{\teich}{}{\text{Teichm\"uller}}
\DeclareMathOperator{\cp}{Cap}
\DeclareMathOperator{\kiss}{Kiss}
\DeclareMathOperator{\ssh}{SSH}
\DeclareMathOperator{\ind}{ind}
\DeclareMathOperator{\sys}{sys}
\DeclareMathOperator{\spn}{Span}
\DeclareMathOperator{\rank}{rank}
\DeclareMathOperator{\syst}{sys_T}
\newtheorem{theorem}{Theorem}[section]
\newtheorem*{theorem*}{Theorem}
\newtheorem*{convention*}{Convention}
\newtheorem{lemma}[theorem]{Lemma}
\newtheorem{corollary}[theorem]{Corollary}
\theoremstyle{definition}
\newtheorem{definition}[theorem]{Definition}
\newtheorem*{definition*}{Definition}
\theoremstyle{remark}
\newtheorem{remark}[theorem]{Remark}
\numberwithin{equation}{section}
\title{Index gap of the systole function}
\author{Changjie Chen}
\begin{document}

\begin{abstract}
    It is known that the systole function is topologically Morse on the moduli space $\mgn$ and the $\syst$ functions are $C^2$-Morse on the Deligne-Mumford compactification $\mgnb$. In this paper, We show that these Morse functions admit an index gap on $\mgn$. Specifically, there exists a universal constant $C>0$ such that any critical point in $\mgn$ has Morse index at least $C\log\log(g+n)$. This implies by Morse theory that the low degree homology of the Deligne-Mumford compactification $\mgnb$ comes from the boundary $\partial\mgn$.
\end{abstract}

\maketitle

\section{Introduction}

The moduli space $\mgn$ of Riemann surfaces of genus $g$ with $n$ punctures is a fundamental and popular object in many areas to study. It encodes information in algebraic geometry, differential geometry, and topology, bridging several mathematical branches, and has rich algebraic, topological, and geometric structures.

The \textit{systole} function takes the value on a hyperbolic surface $X$ of the length of a shortest closed geodesic. Akrout showed in \cite{akrout2003singularites} that the systole function is topologically Morse on any moduli space $\mgn$. This gives hope for the use of Morse theory on moduli spaces. However, this idea is not realizable, because the function is not differentiable and the base space is not compact. See \cite{milnor2016morse} for Morse theory.

In \cite{chen2023c}, the author introduces a family of functions: $$\syst(X):=-T\log\sum_{\gamma \text{ s.c.g. on } X} e^{-\frac1Tl_\gamma(X)},$$ where s.c.g. stands for simple closed geodesic. The $\syst$ functions are Morse on $\mgnb$ and behave well with the Weil-Petersson metric, cf. Theorem \ref{syst}, allowing one to apply Morse theory to study $\mgn$ and $\mgnb$.

It has also been shown that in $\mgn$, the set of $\sys$-critical points, the set of $\syst$-critical points, and the set of \textit{eutactic} points, are canonically isomorphic. Therefore, one can unify the term \textit{index} for critical points for either function or for a eutactic point.

In \cite{schaller1999systoles}, Schmutz Scaller constructed a critical point of index $2g-1$ in every $\mathcal M_{g,0}$ and took the guess that the lowest index of the systole function on $\mathcal M_{g,0}$ is $2g-1$. This was disproved by Fortier Bourque in \cite{bourque2020hyperbolic}, by showing the existence of a genus $g>0$, and a critical point in $\mathcal M_{g,0}$, of index at most $\epsilon g$ for arbitrarily small $\epsilon>0$. It remains unknown whether critical points of the Morse functions on $\mgn$ all have Morse indices above some positive constant. The existence of such a positive lower bound would provide insight into the topological relationship between strata of $\mgnb$.

The goal of this paper is to prove that the lowest index can be bounded from below by a positive number that goes to infinity as the dimension of the moduli space increases. This appears to be the first method in the literature to study the lowest Morse index of critical points, providing the first bound on the growth rate.

\begin{theorem}[= \textbf{Theorem \ref{main theorem down}}]
    For any $k\ge0$, with finitely many exceptions on $(g,n)$, all critical points have Morse index greater than $k$ in $\mgn$, for the systole function or the $\syst$ functions.
\end{theorem}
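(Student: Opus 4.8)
The plan is to convert the Morse-theoretic hypothesis into a purely combinatorial constraint on the systoles and then defeat that constraint with a counting argument. By Akrout's theory \cite{akrout2003singularites}, together with the identification of $\sys$-critical, $\syst$-critical and eutactic points recalled above (so that it suffices to treat a single notion of index), a critical point $X$ is eutactic: the differentials $d\ell_\gamma$ of the systoles $\gamma$ surround the origin, in the sense that $0$ lies in the relative interior of their convex hull, and the index is governed by the convex geometry of this configuration together with the Hessians of the length functions. The first step is to show that a bound $\ind(X)\le k$ forces the systoles to be combinatorially simple: any two of them cross at most $j$ times, where $j=j(k)$ depends only on $k$ and not on $(g,n)$. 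Equivalently, the systoles of a low-index critical point form a $j$-system.

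Second, I would establish that the systoles of any critical point fill $X$. This is where nondegeneracy enters: if the systoles left a complementary subsurface carrying moduli, deforming that piece would fix every systole length to all orders and produce a flat direction, contradicting that $\sys$ is topologically Morse at $X$. Filling has a quantitative payoff. Since each systole has length at most $2\log(-\chi)+O(1)$ (the universal upper bound on the systole of a hyperbolic $(g,n)$-surface) while the systoles must sweep out the entire area $2\pi(-\chi)$, the number $N(X)$ of systoles must grow with $-\chi$; concretely an Euler-characteristic count on the $4$-valent graph $\bigcup_\gamma\gamma$ gives $\sum_{\alpha\neq\beta} i(\alpha,\beta)\gtrsim -\chi$, which together with a collar-lemma bound on each $i(\alpha,\beta)$ forces $N(X)$ to grow at least like a positive power of $-\chi$ up to logarithmic factors.

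Third comes the $j$-system cardinality bound announced in the abstract: on a $(g,n)$-surface a family of simple closed curves pairwise intersecting at most $j$ times has cardinality at most an explicit $M(g,n,j)$. Applied to the systoles of a critical point with $\ind(X)\le k$, this gives $N(X)\le M\bigl(g,n,j(k)\bigr)$. Comparing with the lower bound on $N(X)$ coming from filling yields an inequality that fails once $-\chi$ is large, which is exactly the statement that only finitely many $(g,n)$ can carry a critical point of index at most $k$. Tracking how $M$ depends on $j$ and how $j(k)$ depends on $k$ — both grow very fast — is what converts this clean threshold into the quantitative estimate $\ind\ge C\log\log(-\chi+1)$.

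The main obstacle is the first step: extracting a hard intersection-number bound from the soft analytic fact that the index is small. The index is not merely $\dim\spn\{d\ell_\gamma\}$, since the Hessians of the length functions intervene, so I expect to need a careful local model for $\sys$ near a eutactic point of the kind Akrout constructs, and to argue that a pair of systoles crossing too many times would create an independent descending direction and hence raise the index. Pinning down the constant $j(k)$ explicitly, and proving the $j$-system bound $M(g,n,j)$ with sufficiently good dependence on $j$, are the two quantitatively delicate points; the remainder is bookkeeping with the collar lemma and Gauss--Bonnet.
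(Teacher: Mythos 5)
Your proposal has a genuine gap at its core: nothing in it connects the hypothesis $\ind(X)\le k$ to a bound on the \emph{number} of systoles that is uniform in $(g,n)$, and without that link your final counting step cannot close. Concretely, in your third step you bound $N(X)$ above by the $j$-system capacity of a $(g,n)$-surface and below by a filling estimate; but the capacity grows with $-\chi$ (a pants decomposition already gives $3g-3+n$ pairwise disjoint curves, so any capacity bound is at least linear in $-\chi$), while the filling lower bound is only of order $\sqrt{-\chi}$. Both sides grow with $-\chi$, so the inequality never fails and no finiteness follows. The missing ingredient is exactly the paper's Theorem \ref{induction}: for each $k$ there is an $r_k$, \emph{independent of} $(g,n)$, such that outside finitely many $(g,n)$ every $j$-system with at least $r_k$ curves satisfies $\rank\{\nabla l_\gamma\}\ge k$; this is the geometric heart of the paper, proved by induction using subsurface hulls, essential closures, and Kerckhoff's earthquake monotonicity (Lemma \ref{essential}). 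Combined with Akrout's identity $\ind(X)=\rank\{\nabla l_\gamma\}_{\gamma\in S(X)}$ (Theorem \ref{Akrout's theorem}) it runs the logic in the opposite direction from yours: a critical point of index at most $k$ can carry fewer than $r_{k+1}$ systoles, and since the systoles of a critical point fill, Lemma \ref{minimalcardinality} forces $-\chi$ to be bounded. Note in this connection that your worry that ``the Hessians intervene'' is unfounded: the cited form of Akrout's theorem says the index \emph{equals} the rank of the gradients, and this identity is what makes the whole approach work. It also makes your first step both unnecessary and misdirected: the systoles of \emph{any} hyperbolic surface form a $2$-system (Lemma \ref{intersection=2}), a classical surgery fact requiring no index hypothesis and no local model of $\sys$ near a eutactic point.

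Two further problems. Your filling argument in step two is invalid as stated: a path through a critical point along which $\sys$ is constant does not contradict topological Morse-ness, because the level set through a nondegenerate critical point of intermediate index is a cone and contains such flat paths. The standard argument goes through eutacticity instead: if the systoles do not fill, there is an essential closed geodesic $\delta$ disjoint from all of them, and Riera's formula (Lemma \ref{lengthgradientdisjoint}) gives $\langle\nabla l_\gamma,\nabla l_\delta\rangle>0$ for every $\gamma\in S(X)$, so the origin cannot lie in the convex hull of $\{\nabla l_\gamma\}_{\gamma\in S(X)}$, contradicting criticality. Finally, the quantitative claim $\ind\ge C\log\log(-\chi+1)$ cannot be recovered from ``bookkeeping with the collar lemma and Gauss--Bonnet'': in the paper it comes from iterating the capacity and kissing-number bounds along the induction of Theorem \ref{induction} (giving $r_{k+1}\sim Cr_k^6$, hence doubly exponential growth of $r_k$), which again presupposes the rank-versus-cardinality theorem your outline lacks.
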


Equivalently, by properties of the $\syst$ functions, all low index critical points of $\syst$ live in the Deligne-Mumford boundary. This implies that the low degree (co)homology of $\mgnb$ comes from the boundary $\partial\mgn$. More precisely, the inclusion $i\colon\partial\mgn\to\mgnb$ induces an isomorphism $i_*\colon H_k(\partial \mgn;\mathbb Q) \xrightarrow{\cong} H_k(\mgnb;\mathbb Q)$ between low degree (co)homology groups, cf. \cite{chen2023c}.

More effectively, we are able to estimate the growth rate of the index gap.

\begin{theorem}[= \textbf{Theorem \ref{kissingnumbersubsurface}}]
    There exists a universal constant $C$ such that the lowest index of the systole function and $\syst$ functions on $\mgn$ is at least $C\log\log(g+n)$.
\end{theorem}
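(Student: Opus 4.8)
The plan is to reduce the theorem to a lower bound on a single combinatorial–geometric quantity attached to a critical point $X$, namely the dimension $r=r(X)$ of the subspace $W=\spn\{d\ell_\gamma : \gamma \text{ a systole of } X\}$ of $T_X^*\mgn$ spanned by the length differentials of the systoles. By Akrout's theory and the identification of indices recalled in the introduction (together with Theorem \ref{syst}), the Morse index at $X$ is at least this span dimension: to first order $\sys$ is locally maximal along every line in $W$, so the index dominates $r$. It therefore suffices to prove $r\ge C\log\log(-\chi+1)$. Two facts drive the argument. First, at any eutactic (critical) point the systoles \emph{fill} $X$, since otherwise a Fenchel--Nielsen twist along a curve disjoint from every systole would be an infinitesimal direction annihilating all of the $d\ell_\gamma$, contradicting the isolated, nondegenerate nature of a Morse critical point. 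Second, a filling family whose differentials span only an $r$-dimensional space cannot be large unless $-\chi$ is enormous.

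I would carry this out in three steps. \textbf{Step 1 (geometry of the systoles).} The systoles share the common length $\ell=\sys(X)$, and the collar lemma forces any two of them to intersect at most $j$ times with $j\le \ell/(2w(\ell))$, where $w(\ell)$ is the collar half-width; since $\ell$ is bounded above by the Bers-type maximal-systole estimate $\ell=O(\log(-\chi))$, the systoles form a $j$-system with $j=O(\log(-\chi))$. \textbf{Step 2 (filling forces many systoles).} Because the systoles fill, the $1$-complex they trace cuts $X$ into disks and once-punctured disks, and an Euler-characteristic count gives $\sum_{i<j} i(\gamma_i,\gamma_j)\ge -\chi+1$. Combined with the pairwise bound $i(\gamma_i,\gamma_j)\le j$, this yields a lower bound on the number $m$ of systoles of the form $m\gtrsim \sqrt{(-\chi)/j}$, that is, a fixed power of $-\chi$ up to logarithmic factors.

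\textbf{Step 3 (the span caps the cardinality).} The new input is the explicit upper bound on the cardinality of a $j$-system alluded to in the abstract, applied to the systoles, all of which lie in the $r$-dimensional space $W=\spn\{d\ell_\gamma\}$. The key point is that a filling $j$-system constrained in this way can be organized along a subsurface decomposition on whose pieces the kissing number---the maximal number of systoles---is controlled, so that assembling these local counts bounds $m$ by a doubly exponential function of $r$ times a factor polynomial in $j$. Feeding the lower bound $m\gtrsim\sqrt{(-\chi)/j}$ of Step 2 into this upper bound and inverting the two nested exponentials produces $r\ge C\log\log(-\chi+1)$ for a universal constant $C$, which is the desired estimate.

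The main obstacle is Step 3: establishing the \emph{explicit} $j$-system cardinality bound with the correct doubly exponential dependence on the span dimension $r$, rather than on the full dimension of $\mgn$. This is where the passage to subsurfaces is essential---one must show that a small $r$, together with the filling constraint, confines the systoles to pieces on which the kissing number is bounded, and then recombine the local estimates without losing quantitative control. A secondary, bookkeeping difficulty is to track the constants through the collar estimate, the Bers-type bound on $\sys(X)$, and the two exponential inversions, so that the final constant $C$ is genuinely universal and independent of the pair $(g,n)$.
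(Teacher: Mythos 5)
Your overall skeleton --- index equals the rank of the systole gradients at a critical point, the systoles fill so there are at least a power of $-\chi$ of them, then cap the cardinality of a systole set of bounded rank and invert --- is exactly the paper's strategy, but the step you yourself flag as the main obstacle (Step 3) fails as proposed, and for a quantitative reason. The tool you invoke there is the general $j$-system cardinality bound (Theorem \ref{maxcap}), which is \emph{exponential} in the complexity of the supporting subsurface and in $j$: $\#S\le M+(2jM(M-1))^{jM}$. To make ``small rank confines the systoles to a subsurface'' precise one runs the rank-building induction of Theorem \ref{induction}: if a systole escapes the essential closure $\overline{\ssh}(S_k)$ of the hull of a rank-$k$ subset $S_k$, the rank jumps by one (Lemma \ref{essential}), so one must choose $r_{k+1}$ larger than the maximal number of systoles that can live on $\overline{\ssh}(S_k)$. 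Since $\#^p\overline{\ssh}(S_k)$ is polynomial in $r_k$, feeding in Theorem \ref{maxcap} gives a recursion of the shape $r_{k+1}\approx\exp(\mathrm{poly}(r_k))$, i.e.\ $r_k$ grows like a tower of exponentials of height $k$; inverting that yields only $\ind(X)\gtrsim\log^{*}(-\chi)$ (iterated logarithm), not $C\log\log(-\chi+1)$. The paper's essential move, which your proposal is missing, is to replace the capacity bound by a \emph{polynomial} bound on the number of systoles supported on a subsurface: systoles form a $2$-system (Lemma \ref{intersection=2}), any pair meeting twice must bound the same two cusps' worth of data --- so there are at most $\binom{n}{2}$ such curves (Corollary \ref{same two cusps}) --- and the remaining curves form a $1$-system, to which Przytycki's polynomial bound (Theorem \ref{prz}) applies. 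This is Lemma \ref{kissingnumbersubsurface}, and it turns the recursion into $r_{k+1}\le Cr_k^{6}$, hence $r_k$ is only \emph{doubly} exponential in $k$, which is precisely what inversion needs to produce $\log\log$.

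Two secondary points. First, your Step 1 bound $j=O(\log(-\chi))$ from the collar lemma is far weaker than the truth ($j\le2$, Lemma \ref{intersection=2}); beyond losing constants, a $j$ growing with $\chi$ sits in the \emph{exponent} of the capacity bound you rely on, so it is not a ``factor polynomial in $j$'' and would further degrade the final estimate. Second, your argument that systoles fill at a critical point is not correct as stated: a Fenchel--Nielsen twist along a curve disjoint from all systoles is indeed orthogonal to every $\nabla l_\gamma$, but such directions exist at any critical point of index less than $\dim\mgn$ and contradict nothing. The correct argument uses Riera's formula (Lemma \ref{lengthgradientdisjoint}): a simple closed geodesic $\delta$ disjoint from all systoles satisfies $\langle\nabla l_\delta,\nabla l_\gamma\rangle>0$ for every systole $\gamma$, which is incompatible with $0$ lying in the interior of the convex hull of $\{\nabla l_\gamma\}_{\gamma\in S(X)}$, i.e.\ with eutacticity. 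This part is fixable; the quantitative gap in Step 3 is the real obstruction.
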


Following these results, in the last section, we will give a full classification of critical points of low Morse indices.

We will show a general result on the rank of a set of (Weil-Petersson) gradient vectors of geodesic-length functions, which eventually implies the first theorem above by eutacticity and is also interesting in its own right.

Let a $j$-\textit{system} on a hyperbolic surface be a set of simple closed geodesics that pairwise intersect each other at most $j$ times.

\begin{theorem}[= \textbf{Theorem \ref{induction}}]
Given $j\ge0$, for any $k\ge0$, there exists $r_k>0$ such that, with finitely many exceptions on $(g,n)$, for any $j$-system $S$ with $\#S=r_k$, for any $\teich$ space $\tgn$ and on any surface $X\in\tgn$, at $T_X\tgn$, there is
$$\rank\{\nabla l_\gamma\}_{\gamma\in S}\ge k.$$
\end{theorem}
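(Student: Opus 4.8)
The plan is to prove the statement by induction on $k$, the crux being a uniform (in $(g,n)$) bound on how many geodesics in a $j$-system can have nearly parallel Weil--Petersson length gradients. Throughout I work with the normalized gradients $v_\gamma:=\nabla l_\gamma/\norm{\nabla l_\gamma}$, which are well defined since geodesic-length functions have nowhere-vanishing gradient; as normalization does not affect linear (in)dependence, it suffices to control $\rank\{v_\gamma\}_{\gamma\in S}$. The elementary linear-algebra input is this: if $v_1,\dots,v_k$ are unit vectors with $|\langle v_i,v_j\rangle|\le\varepsilon$ for all $i\ne j$ and $(k-1)\varepsilon<1$, then their Gram matrix is $I+E$ with $\norm{E}\le(k-1)\varepsilon<1$, hence positive definite, so the $v_i$ are independent and contribute rank $k$. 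Thus the theorem reduces to producing, inside any sufficiently large $j$-system, a sub-family of $k$ geodesics whose normalized gradients are pairwise $\varepsilon_k$-orthogonal for a threshold $\varepsilon_k<1/(k-1)$.

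I would strengthen the inductive hypothesis accordingly: for each $k$ there are constants $r_k$ and $\varepsilon_k\in(0,1/k)$, decreasing in $k$, such that, outside finitely many $(g,n)$, every $j$-system of cardinality $r_k$ contains $k$ geodesics with pairwise correlations $|\langle v_{\gamma_i},v_{\gamma_j}\rangle|\le\varepsilon_k$. The base case $k=1$ is immediate. For the inductive step, take a $j$-system $S$ with $\#S=r_{k+1}$. The key estimate is a \emph{locality/packing bound}: there is $P=P(\varepsilon,j)$, independent of $(g,n)$, so that for any fixed $\alpha$ the number of $\beta$ in a $j$-system with $|\langle v_\alpha,v_\beta\rangle|>\varepsilon$ is at most $P$. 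Granting this, the inductive hypothesis gives $\gamma_1,\dots,\gamma_k\in S$ with pairwise correlations $\le\varepsilon_k$; the curves correlated above threshold $\varepsilon_{k+1}$ with some $\gamma_i$ number at most $kP(\varepsilon_{k+1},j)$, so if $r_{k+1}>r_k+kP(\varepsilon_{k+1},j)$ there remains a $\gamma_{k+1}\in S$ whose normalized gradient is $\varepsilon_{k+1}$-orthogonal to every $v_{\gamma_i}$. Taking $\varepsilon_{k+1}\le\varepsilon_k$ preserves the earlier correlations below threshold, completing the step.

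The geometric heart is the locality/packing bound, which I would establish in two parts. First, a \emph{decay} estimate: using Riera's formula for $\langle\nabla l_\alpha,\nabla l_\beta\rangle_{WP}$ together with the lower bound $\norm{\nabla l_\alpha}^2\ge\frac{2}{\pi}l_\alpha$, the correlation $\langle v_\alpha,v_\beta\rangle$ is dominated by a sum over pairs of lifts of a function of their separation decaying like $e^{-2d}$; hence $|\langle v_\alpha,v_\beta\rangle|>\varepsilon$ forces $\beta$ to spend a definite amount of length inside a neighborhood $\mathcal N$ of $\alpha$ whose radius depends only on $\varepsilon$ and $j$. The collar lemma disposes of short geodesics, which are well separated and contribute negligibly. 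Second, a \emph{counting} estimate: since $\mathcal N$ has area bounded independently of the surface (Gauss--Bonnet bounds the topological complexity of a bounded-radius neighborhood), and the members of a $j$-system crossing $\mathcal N$ pairwise intersect at most $j$ times, the number of such $\beta$ is bounded by a ``kissing number of a subsurface'' uniform in $(g,n)$ --- precisely the explicit $j$-system cardinality bound advertised in the introduction. Combining decay and counting yields $P(\varepsilon,j)$.

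The main obstacle I expect is the counting estimate for \emph{long} geodesics: a single $\beta$ may re-enter $\mathcal N$ many times, or run alongside $\alpha$ for a long stretch, while staying in the $j$-system, so one cannot simply count curves by their length inside $\mathcal N$. Controlling this requires the bounded-intersection hypothesis to limit how a $j$-system curve winds through $\mathcal N$, and separating genuinely disjoint curves clustering near $\alpha$ (governed by the bounded complexity of $\mathcal N$) from curves crossing $\alpha$ (limited by $j$). Finally, the finitely many excepted $(g,n)$ are exactly the small-dimensional ones, where $\dim\tgn=6g-6+2n<k$ makes rank $k$ impossible and the uniform geometric estimates degenerate.
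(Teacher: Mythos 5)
Your reduction to pairwise near-orthogonal gradients (Gram matrix argument) and the induction scheme built on it are sound linear algebra and combinatorics, but the entire proof stands on the ``locality/packing bound'', which you do not prove --- and which is in fact \emph{false}, even for $j=0$. Here is a counterexample. Fix $L$ large and let $X_N$ be a hyperbolic surface of genus $N+1$ containing a nonseparating geodesic $\alpha$ of length $L$ with an embedded collar of width $1$, and $N$ handles of bounded geometry attached at bounded distance from $\alpha$ at $N$ points spread along it. Let $\beta_i$ be the geodesic representative of the band sum of $\alpha$ with the core of the $i$-th handle; running the $\beta_i$ at distinct heights in the collar, with the $i$-th handle attached inside the $i$-th height band, realizes $\{\alpha,\beta_1,\dots,\beta_N\}$ as pairwise \emph{disjoint} simple closed geodesics, i.e.\ a $0$-system of size $N+1$. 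Now each $\beta_i$ wraps once around the collar of $\alpha$, so a lift of $\beta_i$ stays within distance $O(1)$ of the axis of $\alpha$ over a parameter range of length about $L$; by convexity of the distance function between disjoint geodesics ($\sinh d(t)=\sinh d_{\min}\cosh(t-t_0)$), the minimal distance satisfies $d_{\min}\lesssim e^{-L/2}$. The corresponding single term $u\log\frac{u+1}{u-1}-2$, $u=\cosh d_{\min}$, in Riera's formula is then at least $2\log(1/d_{\min})-O(1)\ge L-O(1)$, while under the bounded-geometry normalization one has $\|\nabla l_\alpha\|^2\le CL$ and $\|\nabla l_{\beta_i}\|^2\le CL$ (the Riera self-sums are linear in length when the curve has a definite tube). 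Hence $\langle v_\alpha,v_{\beta_i}\rangle\ge \varepsilon_0$ for a universal $\varepsilon_0>0$ independent of $N$ and $L$, so no finite $P(\varepsilon_0,0)$ exists. Note this does not contradict the theorem itself: disjoint curves always have linearly independent gradients (the vectors here are nearly parallel yet independent); it contradicts only your packing lemma. The failure is exactly the ``curves running alongside $\alpha$'' obstacle you flagged: the $j$-system hypothesis prevents one curve from winding many times around $\alpha$ (twist powers have growing pairwise intersection), but it does not prevent arbitrarily many distinct curves from each winding \emph{once}, and one wrap already forces correlation bounded below.

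Two further ingredients of your sketch also break down: the radius-$R$ neighborhood $\mathcal N$ of $\alpha$ has area $\approx 2l_\alpha\sinh R$ and complexity growing with $l_\alpha$, so it is not ``bounded independently of the surface''; and the $j$-capacity bound (Theorem \ref{maxcap}) counts curves \emph{contained in} a subsurface, not curves merely passing through it, so it cannot bound the curves crossing $\mathcal N$. For contrast, the paper's proof avoids all quantitative correlation estimates. Its induction step is topological-plus-soft-geometric: by Lemma \ref{maxhull}, Lemma \ref{essential closure upper bound} and Theorem \ref{maxcap}, the essential closure $\overline{\ssh}(S_k)$ of the hull of any $r_k$-subset has uniformly bounded $j$-capacity, so once $\#S\ge r_{k+1}$ some curve of $S$ leaves $\overline{\ssh}(S_k)$; then Lemma \ref{essential} and Lemma \ref{extra rank} produce a twist vector $\tau_{\delta}$ (after finitely many Dehn twists along an auxiliary curve, using Kerckhoff's formula $\langle\nabla l_\alpha,\tau_\delta\rangle=\sum\cos\theta_i$) that annihilates every gradient from $S_k$ but pairs nontrivially with the new curve's gradient, giving a strict rank increase. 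In your framework that rank increase is certified by near-orthogonality, which, as the example shows, the geometry simply does not provide; any repair would have to replace the packing bound with an input of this non-quantitative, twist-based kind.
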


We also give an effective upper bound on the cardinality of a $j$-system on a $(g,n)$-surface, although the growth rate is weaker than known results. This was first studied by Juvan, Malni\v{c} and Mohar in \cite{juvan1996systems}, where it was shown that the maximum is finite. Malestein, Rivin and Theran gave explicit bounds for $j=1$ in \cite{malestein2014topological}. Przytycki significantly improved the bound by giving a sharp upper bound for 1-system of arcs in \cite{przytycki2015arcs} and studied the asymptotic growth for general $j$. Aougab, Biringer, Gaster improved the estimate on the asymptotic growth in \cite{aougab2019packing}. Greene improved the bound for 1-system on closed surfaces in \cite{greene2018curves}.

Let $M(g,n)$ be the maximum cardinality of a minimal filling set on a $(g,n)$-surface.

\begin{theorem}[= \textbf{Theorem \ref{maxm} and \ref{maxcap}}]
    We have the following:
    
    (1) $M(0,2)=1.$

    (2) $M(g,n)\le 3g+n.$

    (3) Let $S$ be a $j$-system on a $(g,n)$-surface, then $$\#S\le M(g,n)+(2jM(g,n)(M(g,n)-1))^{jM(g,n)}.$$
\end{theorem}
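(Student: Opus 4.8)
The plan is to handle the three parts in order, building the general $j$-system bound (3) on top of the filling-set bounds (1) and (2). For (1), the twice-punctured sphere carries a unique isotopy class of essential simple closed curve, namely the one separating its two punctures, and this single curve already fills, cutting $S_{0,2}$ into two once-punctured disks. Since it is the only available curve, every filling set equals this singleton, so $M(0,2)=1$.

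For (2) I would bound the size of an arbitrary inclusion-minimal filling set $S$ by $3g+n$. Put $S$ in minimal position so that $\bigcup_{\gamma\in S}\gamma$ is a $4$-valent graph with $V$ crossings and $E=2V$ edges, and whose complementary regions are $d$ disks together with $n$ once-punctured disks (each puncture lies in its own face because $S$ fills). The Euler-characteristic relation $V-E+(d+n)=2-2g-n$ then gives $V=d+2g+2n-2$. The remaining input is minimality: deleting any single $\gamma$ must destroy the filling property, which simultaneously forces each curve to carry crossings and constrains the number $d$ of disk regions. I would organize this as an induction on $-\chi$, cutting along one curve of $S$ (separating or not), verifying that the pieces inherit minimal filling sets on lower-complexity surfaces, and using that the target quantity $3g+n$ is subadditive under such cuts, with the small surfaces (including (1)) as base cases.

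For (3), let $S$ be a $j$-system and let $Y\subseteq S_{g,n}$ be the subsurface it fills. Choose $S_0\subseteq S$ a minimal filling subset of $Y$; by (2) together with monotonicity of $M$ under passing to filled subsurfaces, $\#S_0\le M(g,n)=:M$. Every $\gamma\in S\setminus S_0$ is then forced to cross $S_0$, with $i(\gamma,S_0)\le jM$ since $\gamma$ meets each of the at most $M$ curves of $S_0$ at most $j$ times. Cutting along $S_0$ yields disks and once-punctured disks whose sides are the arcs into which the crossings of $S_0$ subdivide its curves; there are at most $2\cdot j\binom{M}{2}=jM(M-1)$ such sides. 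A curve $\gamma$ crossing $S_0$ at most $jM$ times is cut into at most $jM$ arcs, each lying in one complementary region. The key reconstruction (rigidity) lemma is that an arc in a disk is determined up to isotopy rel endpoints by its two endpoints, while in a once-punctured disk there is only the extra binary choice of which side of the puncture it passes; hence the isotopy class of $\gamma$ is recovered from the cyclic sequence of sides it crosses. Specifying this sequence costs at most $2\cdot jM(M-1)$ choices per arc (sides, with the factor $2$ for the puncture ambiguity) over at most $jM$ arcs, so the number of admissible isotopy classes, and therefore $\#(S\setminus S_0)$, is at most $(2jM(M-1))^{jM}$. Adding $\#S_0\le M$ gives the stated bound.

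The main obstacle I expect is twofold. First is making the rigidity lemma airtight: proving that distinct elements of the $j$-system genuinely produce distinct combinatorial patterns relative to $S_0$, and that windings in the once-punctured disks are confined to exactly the claimed binary ambiguity rather than contributing an unbounded factor. Second is pinning down the sharp constant $3g+n$ in (2), where a crude Euler-characteristic count alone is not sufficient and minimality must be exploited carefully, together with establishing the monotonicity $M(Y)\le M(g,n)$ that feeds (2) into the reduction in (3).
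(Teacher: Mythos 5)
Your parts (1) and (3) are correct and essentially reproduce the paper's own arguments. For (1), the point is exactly that a $[0,2]$-surface (in the paper's convention an annulus, in yours a twice-punctured sphere) carries a unique essential simple closed curve, so $M(0,2)=1$. For (3), your reduction is the paper's proof of Theorem \ref{maxcap} almost verbatim: a minimal filling subset $S_0$ with $\#S_0\le M$, the bound $l\le jM$ on the number of intersections of an extra curve $\gamma$ with $\bigcup S_0$, the count of at most $jM(M-1)$ edges of the graph $\bigcup S_0$, and a factor $2$ per arc; your resolution of the ``winding'' worry is also the paper's: a simple arc in a once-punctured (or once-holed) polygon cannot wind, so only the binary choice of side survives, giving $(2jM(M-1))^{jM}$ possibilities. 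Your remaining worry --- that one needs $M(\ssh(S))\le M(g,n)$ when $S$ does not fill --- is legitimate; the paper tacitly assumes $S$ fills, and otherwise one should run the argument inside $\ssh(S)$ and use that the bound $3g+n$ from (2) is monotone under passing to subsurfaces.

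The genuine gap is in (2), precisely the step you flag as uncertain. The induction you propose --- cut $X$ along one curve $\gamma\in S$ and let ``the pieces inherit minimal filling sets'' --- does not get off the ground: since $\bigcup S$ is connected (Lemma \ref{fillingconnected}), other curves of $S$ must cross $\gamma$, and after cutting along $\gamma$ those curves become arcs, not closed curves, so there is no filling set of closed geodesics on the pieces to which an inductive hypothesis about $M$ could apply. Moreover, the claimed subadditivity of $3g+n$ fails for a separating cut: the two pieces have types $[g_i,n_i+1]$ with $g_1+g_2=g$ and $n_1+n_2=n$, so the sum of $3g_i+(n_i+1)$ is $3g+n+2$, which exceeds $3g+n$. (Your Euler count also treats the once-punctured faces as contributing $1$ rather than $0$ to $\chi$, but that is secondary.) The paper's Lemma \ref{iteration} uses a different and cleaner reduction: from a minimal filling set $S$ of maximum cardinality, \emph{delete} (do not cut along) one curve $\gamma_r$ and pass to the subsurface hull $Y_1=\ssh(S\setminus\{\gamma_r\})$; minimality of $S$ forces $Y_1\subsetneqq X$, and a short argument shows $S\setminus\{\gamma_r\}$ is still minimal filling in $Y_1$, whence $M(g,n)\le 1+M(Y_1)$. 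Iterating, and observing that $f=3g+n$ strictly decreases when passing to any proper subsurface (every proper subsurface lies at the end of a chain of maximal ones, of types $[g-1,n+2]$ and $[g,n-1]$, each dropping $f$ by exactly $1$), one descends to a $[0,2]$-subsurface and concludes $M(g,n)\le 3g+n$ from the base case $M(0,2)=1$. This deletion-plus-hull step is the missing idea; with it, your outline of (2) becomes the paper's proof.
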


The organization of this paper is as follows. In Section \ref{Morse section}, we review the systole function and $\syst$ functions, as well as eutacticity. We study topological properties of $j$-systems in Section \ref{basicdefinitions}, and geometric properties in Section \ref{essentialsection}, where we show the rank statement. We apply the rank statement on sets of shortest geodesics in Section \ref{systole section} and obtain the first theorem above. We compute for a lower bound on the lowest index in Section \ref{logloglowerbound}. In Section \ref{classification}, we classify all critical points of index 0, 1 and 2.

\section{Morse functions}
\label{Morse section}
We review theorems on the systole function, and the $\syst$ functions. For background of hyperbolic geometry and $\teich$ theory, one may refer to \cite{buser2010geometry} or \cite{imayoshi2012introduction}.

\begin{definition}
Let $0<T<1$ be a parameter. For a hyperbolic surface $X\in\mgn$,
$$\sys(X):=\min_{\gamma\text{ closed geodesic on }X}l_\gamma(X),$$ and
$$\syst(X):=-T\log\left(\sum_{\gamma \text{ s.c.g. on } X} e^{-\frac1Tl_\gamma(X)}\right),$$
where s.c.g. stands for simple closed geodesic. For a nodal surface $X\in\partial\mgn$ with $s$ nodes, $$\syst(X):=-T\log\left(s+\sum_{\gamma \text{ s.c.g. on } X} e^{-\frac1Tl_\gamma(X)}\right).$$
\end{definition}

We let $S(X)$ denote the set of shortest closed geodesics on a hyperbolic surface $X$.

\begin{definition}[Eutacticity]
    A point $X\in\tgn$ is called \textit{eutactic}, if in the tangent space $T_X\tgn$, the origin is contained in the interior of the convex hull of $\{\nabla l_\gamma\}_{\gamma\in S(X)}$, the set of (Weil-Petersson) gradient vectors of the geodesic length functions associated to the shortest geodesics, with respect to the subspace topology on $\spn\{\nabla l_\gamma\}_{\gamma\in S(X)}$.
\end{definition}

\begin{definition}[Topological Morse function, cf. \cite{morse1959topologically}]
Let $M$ be an $n$-dimensional manifold and $f\colon M\to\mathbb R$ a continuous function.

(1) A point $x\in M$ is called ($C^0$-)\textit{regular} if on a $C^0$-chart around $x$, $f$ is a coordinate function, otherwise it is called ($C^0$-) critical.

(2) A \textit{critical point} $x$ is nondegenerate if on a $C^0$-chart $(x^i)$ around $x$ such that $$f-f(x)=(x^1)^2+\cdots+(x^r)^2-(x^{r+1})^2-\cdots(x^n)^2.$$ In this case the \textit{index} of $f$ at $x$ is defined to be $n-r$.

(3) A continuous function is called \textit{topologically Morse} if all the critical points are nondegenerate.
\end{definition}

\begin{theorem} [\cite{akrout2003singularites}]
\label{Akrout's theorem}
The systole function is topologically Morse on $\mgn$ for any $(g,n)$. $X\in\mgn$ is a critical point if and only if $X$ is eutactic, and in this case the index is equal to $\rank\{\nabla l_\gamma\}_{\gamma\in S(X)}$.
\end{theorem}

We have the following theorem on the $\syst$ functions.

\begin{theorem}[\cite{chen2023c}]
\label{syst}
    (I) For sufficiently small $T>0$, $\syst$ is a $C^2$-Morse function on the Deligne-Mumford compactification $\overline{\mathcal M}_{g,n}$.

    (II) In $\mgn$, $\syst$-critical points and eutactic points are in pairs, in each of which the two points, say $p_T$ and $p$, have Weil-Petersson distance $d_{WP}(p_T,p)<CT$, and the Morse index of $p_T$ is equal to the eutactic rank of $p$. Consequently, the former converges to the latter as $T\to0^+$.

    (III) Let $\mathcal S$ be a boundary stratum of $\mgnb$ that is canonically isomorphic to the product $\prod \mathcal M_i$ of moduli spaces, and $X=\coprod_{\{\text{nodes}\}} X_i\in\mathcal S$ be the decomposition with respect to the stratification such that $X_i\in\mathcal M_i$. The following are equivalent:

    (1) $X$ is a critical point of $\syst$ on $\mgnb$;
    
    (2) $X_i$ is a critical point of $\syst$ on $\mathcal M_i$, for all $i$.
    
    And in this case, $$\ind(X)=\sum\ind(X_i).$$

    (IV) The Weil-Petersson gradient flow of $\syst$ on $\overline{\mathcal M}_{g,n}$ is well defined. Each gradient flow line stays in a single stratum and flows down from a critical point to a critical point in the same stratum or converges to a critical point in its boundary stratum.
\end{theorem}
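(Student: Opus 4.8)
The plan is to treat $\syst=-T\log\Sigma$ (with $\Sigma:=\sum_\gamma e^{-l_\gamma/T}$, plus the $+s$ correction on the boundary) as a log-sum-exp soft-minimum smoothing of the systole, and to transfer Akrout's topological-Morse picture into a genuine $C^2$ picture by a small-$T$ perturbation analysis, treating the Deligne--Mumford boundary with Wolpert's pinching asymptotics.

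\textbf{Preliminaries and the two formulas.} I would first record that each $l_\gamma$ is real-analytic and strictly Weil--Petersson convex on $\tgn$ (Wolpert), and that the number of simple closed geodesics of length $\le L$ grows polynomially in $L$ (Mirzakhani); hence $\Sigma$ and all its termwise derivatives converge locally uniformly for every $T>0$, so $\syst$ is real-analytic on the interior. Differentiating, with Gibbs weights $w_\gamma:=e^{-l_\gamma/T}/\Sigma$, the Weil--Petersson gradient is a convex combination and, at a point where $\nabla\syst=0$, the Hessian simplifies:
\[
\nabla\syst=\sum_\gamma w_\gamma\,\nabla l_\gamma,\qquad \mathrm{Hess}\,\syst=\sum_\gamma w_\gamma\,\mathrm{Hess}\,l_\gamma-\frac1T\sum_\gamma w_\gamma\,\nabla l_\gamma\otimes\nabla l_\gamma.
\]
Every assertion of the theorem can be read off from these.

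\textbf{Parts (I), interior case, and (II).} A point is $\syst$-critical iff the barycenter $\sum_\gamma w_\gamma\nabla l_\gamma$ vanishes; as $T\to0^+$ the weights concentrate (Laplace) on the shortest geodesics $S(X)$, so this equation converges to the demand that $0$ lie in the convex hull of $\{\nabla l_\gamma\}_{\gamma\in S(X)}$, which is eutacticity. To obtain the stated bijection I would set $G(X,T):=\nabla\syst(X)$ and run an implicit-function/degree argument near each eutactic point $p$: the strict positivity of the barycentric weights guaranteed by eutacticity makes the limiting linearization transverse, yielding a unique zero $p_T$ with $d_{WP}(p_T,p)=O(T)$. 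For the index I would write the displayed Hessian as $A-\tfrac1T B$ with $A=\sum w_\gamma\,\mathrm{Hess}\,l_\gamma\succ0$ (strict convexity) and $B=\sum w_\gamma\,\nabla l_\gamma\otimes\nabla l_\gamma\succeq0$, the latter positive definite exactly on $V:=\spn\{\nabla l_\gamma\}_{\gamma\in S}$ and vanishing on $V^\perp$. For small $T$ the term $-\tfrac1T B$ forces the form negative on $V$ and leaves it positive on $V^\perp$, so the Hessian is nondegenerate with exactly $\dim V=\rank\{\nabla l_\gamma\}_{\gamma\in S(p)}$ negative directions; by Akrout's theorem this equals the systolic index, giving (II) and the interior Morse claim of (I).

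\textbf{Parts (III) and (IV).} On a product stratum $\mathcal S\cong\prod\mathcal M_i$ a simple closed geodesic of the nodal surface lies on a single piece, so $\nabla l_\gamma\in T_{X_i}\mathcal M_i$ and the gradient $\sum_\gamma(e^{-l_\gamma/T}/D)\nabla l_\gamma$ (here $D=s+\Sigma$) splits into blocks indexed by $i$; its vanishing is equivalent to the vanishing of each block, i.e.\ to criticality of each $X_i$. At such a point the Hessian is block-diagonal, the $i$-th block being the positive multiple $F_i/D$ of the intrinsic Hessian of $\syst$ on $\mathcal M_i$ (where $F_i=\sum_{\gamma\subset X_i}e^{-l_\gamma/T}$), so signatures add and $\ind(X)=\sum\ind(X_i)$. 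The normal node-opening directions are handled separately: opening a node turns a summand $1$ (inside the $+s$) into $e^{-l/T}<1$, so $\syst$ strictly increases, while a computation with $l_\alpha\sim 2\pi^2/\log(1/|t|)$ and the metric blow-up $g^{t\bar t}\sim|t|^2|\log|t||^3$ shows the Weil--Petersson gradient in these directions vanishes at the boundary. Thus normal directions are ascending and do not add to the index, and the gradient is tangent to each stratum; on the compact $\mgnb$, the $C^2$-Morse property then yields (IV) by standard stratified Morse flow theory, each line descending between critical points of its stratum or accumulating on a critical point of a boundary stratum.

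\textbf{The main obstacle.} The genuinely hard step is the $C^2$-extension across the boundary in (I). Near the pinching locus the individual terms $e^{-l_\gamma/T}$ and their coordinate derivatives are singular: for geodesics crossing a pinched curve $l_\gamma$ blows up like the collar width, while for the pinched curve itself $l_\alpha\sim 2\pi^2/\log(1/|t|)$ is not even $C^1$ in the plumbing parameter. The content of the claim is that, after the $+s$ correction and summation of the full Dehn-twist orbits of the crossing geodesics (a theta-like sum), these singular contributions cancel to second order in a chart adapted to the Weil--Petersson geometry. Making this uniform --- controlling the infinite sum while matching the boundary expansion --- is where Wolpert's precise pinching asymptotics must be combined with the polynomial counting, and it is the step I expect to absorb most of the work. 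By comparison, the remaining uniform-in-$T$ estimate needed to ensure that \emph{every} $\syst$-critical point, not only those produced by the implicit function theorem, lies within $O(T)$ of a eutactic point is a routine compactness argument over the finitely many combinatorial configurations of shortest geodesics.
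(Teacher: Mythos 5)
This theorem is imported verbatim from \cite{chen2023c}; the present paper cites it as a black box and contains no proof of it, so there is no internal argument to compare your proposal against. Judged on its own terms, your interior skeleton (Gibbs weights $w_\gamma$, the identity $\nabla\syst=\sum_\gamma w_\gamma\nabla l_\gamma$, Laplace concentration on $S(X)$, an implicit-function argument at eutactic points, and the decomposition $\mathrm{Hess}\,\syst=A-\tfrac1T B$) is the natural route and almost certainly the spirit of the cited proof, but two points need flagging.

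First, a genuine error in your index computation: the claim that $B=\sum_\gamma w_\gamma\,\nabla l_\gamma\otimes\nabla l_\gamma$ is ``positive definite exactly on $V=\spn\{\nabla l_\gamma\}_{\gamma\in S}$ and vanishing on $V^\perp$'' is false for any fixed $T>0$. The sum runs over \emph{all} simple closed geodesics, whose gradients generically span the whole tangent space, so $B$ has no kernel along what you call $V^\perp$. The correct statement is $B=B_0+E_T$ with $B_0$ supported on the shortest curves and $\lVert E_T\rVert=O(e^{-\delta/T})$ for a length gap $\delta>0$; you must then show that $\tfrac1T E_T$ is dominated by the uniformly positive $A$ on $V^\perp$ while $\tfrac1T B_0$ dominates $A$ on $V$, and control how the gap, the weights, and $S(p_T)$ vary along the family $p_T$. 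This is a standard quantitative Laplace argument and fixable, but as written the pivotal display is wrong. Relatedly, dismissing as ``routine compactness'' the step that \emph{every} $\syst$-critical point lies within $O(T)$ of a eutactic point is too quick: $\mgn$ is not compact, and you need to exclude critical points in the thin part (e.g.\ by noting that when a short curve $\alpha$ dominates the Gibbs weights, $\nabla\syst\approx\nabla l_\alpha\neq0$), not merely invoke finiteness of configurations.

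Second, you correctly identify the $C^2$ extension across the Deligne--Mumford boundary as the main obstacle, but you then leave it entirely unproved, and (III) and (IV) silently depend on it. Your treatment of the node-opening directions illustrates the problem: you argue $\syst$ increases when a node is opened and simultaneously that the Weil--Petersson gradient in the normal directions vanishes at the boundary, so the increase is degenerate at first order; to conclude that the normal block contributes no negative directions (hence $\ind(X)=\sum\ind(X_i)$) you need the normal Hessian block to be positive definite, i.e.\ precisely the second-order boundary expansion you have not carried out. Likewise the block-diagonality of the Hessian between tangential and normal coordinates, the nondegeneracy of boundary critical points, and the stratum-preservation of the gradient flow in (IV) all rest on that missing expansion. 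So the proposal establishes (plausibly, after repairing the $B$-matrix argument) the interior statements (II) and the interior half of (I), while the theorem's hardest and most distinctive claims --- boundary $C^2$-Morse regularity and the stratified flow picture --- remain conjectural in your write-up, as you yourself acknowledge.
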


\begin{remark}
\label{remark rank statement}
    The rank statement in both the theorem on $\sys$ and $\syst$ is what we will use to estimate the index at a critical point.
\end{remark}

In particular, (III) gives a description of the critical points in the Deligne-Mumford boundary $\partial\mathcal M_{g,n}$, according to which we can decompose such a critical point into smaller surfaces that are critical points in their respective moduli spaces. We can also construct a critical point by connecting critical points in any moduli space by nodes. As a result, the study of critical points in the Deligne-Mumford compactification can be reduced to each smaller moduli space.

Since the Morse index is compatible with the stratification, the result on low index critical points in $\mgn$ comes with the result on low index critical points in $\mgnb$, which by Morse theory, gives the consequent (co)homological conclusion on $\mgnb$, see \cite{chen2023c}.

\section{Curves on subsurfaces}
\label{basicdefinitions}

We investigate a set of curves that satisfy certain intersection conditions, on any subsurface of a hyperbolic surface.

\begin{convention*}
    In the following sections, we will utilize specific functions on the space of hyperbolic surfaces that are invariant under diffeomorphisms or hyperbolic isometries. If $P$ is such a function, then we may also use $P(g,n)$ for $P(X)$ for any $[g,n]$-surface $X$, by abuse of notation.
\end{convention*}

\begin{definition}
    A $(g,n)$-surface is a complete hyperbolic surface of genus $g$ with $n$ punctures. A $(g,n,b)$-surface is a hyperbolic surface of genus $g$ with $n$ punctures and $b$ geodesic boundary components. A $[g,n]$-surface is a hyperbolic surface of genus $g$ with a total number $n$ of punctures and geodesic boundary components. For convenience, we use $[0,2]$-surface to refer to a circle or an annulus.
\end{definition}

\begin{definition}
A \textit{subsurface} of a hyperbolic surface $X$ is some $[g,n]$-surface whose interior is isometrically embedded in $X$. The \textit{subsurface hull} $\ssh(S)$ of a set $S$ of simple closed geodesics on $X$ is the minimal subsurface of $X$ that contains $\bigcup S$.
\end{definition}

\begin{remark}
    The definition is valid in terms of uniqueness of such minimal subsurface. If two subsurfaces $X_1$ and $X_2$ intersect, there is the unique subsurface $X_0$ that `supports' $X_1\cap X_2$ by pulling straight the piecewise geodesic boundaries. More precisely, if a simple closed geodesic $\gamma\subset X_1\cap X_2$, then $\gamma\subset X_0$.
\end{remark}

\begin{definition}
    Let $S$ be a set of simple closed geodesics on a hyperbolic surface possibly with geodesic boundary, then $S$ is called
    
    (1) a $j$\textit{-system}, if curves in $S$ intersect pairwise at most $j$ times;

    (2) \textit{filling}, if each complementary region is a hyperbolic polygon, once-punctured polygon, or once-holed polygon, where the hole comes from a boundary component;

    (3) \textit{minimal filling}, if no proper subset is filling.
\end{definition}

\begin{remark}
    (1) $S$ is filling $X$, if and only if every non-peripheral curve on $X$, that is not free homotopic to a boundary component, can be homotoped onto $\bigcup S$.
    
    (2) $S$ is minimal filling if and only if for any $\gamma\in S$, $S\setminus\{\gamma\}$ is not filling.
\end{remark}

\begin{lemma}
\label{fillingconnected}
    Let $S$ be a filling set of simple closed geodesics on a (connected) hyperbolic surface, then $\bigcup S$ is connected.
\end{lemma}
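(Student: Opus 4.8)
The plan is to argue by contradiction. Suppose $\bigcup S$ is disconnected, and write $\bigcup S = A\sqcup B$ with $A,B$ nonempty, disjoint, and closed, where $A$ is one connected component and $B$ is the union of the remaining ones. Since $\bigcup S$ is a finite union of simple closed geodesics, it is a finite graph, hence locally connected, so $A$ and $B$ are both open and closed in $\bigcup S$. I will use this to manufacture a separation of the ambient connected surface $X$, contradicting its connectedness. Throughout I write $U=X\setminus\bigcup S$, an open set whose connected components are exactly the complementary regions, and for a region $R$ I set $\mathrm{Fr}_S(R):=\mathrm{Fr}(R)\cap\bigcup S$, the part of its frontier lying on the curves of $S$. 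Note first that $\mathrm{Fr}(R)\subseteq\bigcup S$ (a frontier point of a component of $U$ lies in $X\setminus U=\bigcup S$, since boundary points of $X$ off $\bigcup S$ are interior to $U$ relative to $X$), so $\mathrm{Fr}_S(R)=\mathrm{Fr}(R)$, and this is nonempty whenever $\bigcup S\neq\emptyset$, as otherwise $R$ would be clopen and equal to all of $X$.

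The crux is the claim that $\mathrm{Fr}_S(R)$ is connected for every complementary region $R$, and this is exactly where the filling hypothesis enters. By assumption $R$ is a hyperbolic polygon, a once-punctured polygon, or a once-holed polygon whose hole is a boundary component of $X$. In each case the frontier consists of a single polygonal cycle of geodesic arcs lying on $\bigcup S$: the cusp of a once-punctured polygon is an ideal point off $\bigcup S$, and the hole of a once-holed polygon lies on $\partial X$ and so does not contribute to $\mathrm{Fr}_S(R)$. Hence $\mathrm{Fr}_S(R)$ is a single connected cycle. Being connected, it lies entirely in $A$ or entirely in $B$; that is, each complementary region is incident to exactly one connected component of $\bigcup S$. (This is also where one must rule out, say, an annulus cobounded by two distinct curves of $S$, whose $S$-frontier would be disconnected — but such a region is not among the allowed types, which is precisely the content of filling.)

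With this in hand I would finish by sorting the regions. Let $V_A$ (resp. $V_B$) be the union of those complementary regions $R$ with $\mathrm{Fr}_S(R)\subseteq A$ (resp. $\subseteq B$), and set $X_A:=A\cup V_A$ and $X_B:=B\cup V_B$. Since every point of $X$ lies either on $\bigcup S$ or in exactly one region, $X=X_A\sqcup X_B$, and both pieces are nonempty because $A\subseteq X_A$ and $B\subseteq X_B$. Each of $X_A,X_B$ is open: a point of some $V_\bullet$ has a region-neighborhood, while for $x\in A$ one takes a small disk $D$; local connectedness of $\bigcup S$ gives $D\cap\bigcup S\subseteq A$, and every region meeting $D$ then has frontier points in $A$, hence by connectivity of $\mathrm{Fr}_S$ lies in $V_A$, so $D\subseteq X_A$. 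Thus $X_A,X_B$ form a separation of $X$, the desired contradiction. I expect the main obstacle to be the frontier-connectivity claim of the second paragraph: it requires a careful reading of the three allowed region types so that punctures and boundary-holes are correctly excluded from $\mathrm{Fr}_S(R)$, and one must verify that no complementary region with disconnected $S$-frontier can occur, which is exactly what the filling condition guarantees.
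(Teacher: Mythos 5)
Your proof is correct and follows essentially the same route as the paper's: the paper likewise observes that the filling hypothesis forces the boundary of each complementary region to be a connected path/cycle in the graph $\bigcup S$ (hence lying in a single component), and then concludes that regluing the regions to a disconnected $\bigcup S$ would disconnect the surface. Your write-up merely makes that final regluing step rigorous by exhibiting the explicit open separation $X_A\sqcup X_B$, which is a welcome elaboration but not a different argument.
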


\begin{proof}
    Note that the boundary of any complementary region is a path in the graph of the curves in $S$. If $\bigcup S$ is not connected, regluing the complementary regions, one gets a disconnected surface.
\end{proof}

\begin{definition}
    For a subsurface $Y$ of a hyperbolic surface, let $\#^p(Y)$ be the number of pairs of pants in a pants decomposition of $Y$.
\end{definition}

\begin{remark}
\label{pantscardinatlity}
    $\#^p(Y)=-\chi(Y)$, where $e$ is the Euler characteristic.
\end{remark}

\begin{lemma}
\label{maxhull}
Let $S$ be a $j$-system of $r$ curves, then
$$\#^p\ssh(S)\le j\binom{r}{2}.$$
\end{lemma}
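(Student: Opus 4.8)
The plan is to reduce the bound on $\#^p\ssh(S) = -\chi(\ssh(S))$ (Remark \ref{pantscardinatlity}) to a count of intersection points, by viewing $\bigcup S$ as an embedded graph and comparing its Euler characteristic with that of the hull.

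First I would form the graph $G = \bigcup S$ whose vertices are the intersection points of the curves and whose edges are the arcs of curves between consecutive intersection points. Since distinct simple closed geodesics meet transversally and are automatically in minimal position, the geometric intersection numbers realize the actual number of crossings; hence the total number of intersection points, counted as pairs of curves meeting there, is at most $j\binom{r}{2}$, as each of the $\binom{r}{2}$ pairs contributes at most $j$ points. In the generic situation every vertex is a transversal double point of degree $4$, so $E = 2V$ and $-\chi(G) = E - V = V \le j\binom{r}{2}$. If $k$ curves happen to pass through a common point, that vertex has degree $2k$, contributing $k-1$ to $E - V$ but $\binom{k}{2}$ to the pairwise count; since $\binom{k}{2} \ge k-1$ for $k \ge 2$, summing over all vertices still gives $-\chi(G) = \sum_v(k_v - 1) \le \sum_v \binom{k_v}{2} \le j\binom{r}{2}$.

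Next I would relate $\chi(\ssh(S))$ to $\chi(G)$. A regular neighborhood $N$ of $G$ is homotopy equivalent to $G$, so $\chi(N) = \chi(G)$. By minimality of the subsurface hull, $S$ fills $\ssh(S)$: were some complementary region of $\bigcup S$ an essential subsurface, one could excise it along the geodesics of $S$ to obtain a strictly smaller subsurface still containing $\bigcup S$, contradicting minimality. Hence every complementary region is a disk, a once-punctured disk, or a once-holed disk, and $\ssh(S)$ is obtained from $N$ by capping these regions. Capping a disk raises the Euler characteristic by $1$, while capping a once-punctured or once-holed disk leaves it unchanged; in every case $\chi(\ssh(S)) \ge \chi(N) = \chi(G)$. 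Combining the two steps, $-\chi(\ssh(S)) \le -\chi(G) \le j\binom{r}{2}$, which is the claim.

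The main obstacle I anticipate is the passage from the graph to the hull, namely justifying cleanly that the hull is recovered from a neighborhood of $\bigcup S$ by capping only disks and punctured or holed disks. This is precisely where the minimality in the definition of $\ssh(S)$ and the filling dichotomy must be invoked carefully, together with the bookkeeping for isolated curves (which contribute $0$ to $-\chi(G)$) and for non-generic vertices where more than two geodesics coincide.
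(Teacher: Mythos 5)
Your proof is correct and follows essentially the same route as the paper: both bound $-\chi(\ssh(S))$ by an Euler characteristic count on the graph $G=\bigcup S$, whose vertices are intersection points bounded in number by $j\binom{r}{2}$. In fact your write-up is more careful than the paper's one-line computation $-\chi = -V+E-F = V-F \le V$, since you explicitly handle the non-disk complementary regions (punctured and holed polygons, which cap off without changing $\chi$) and non-generic vertices where more than two geodesics meet.
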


\begin{proof}
    We calculate the Euler characteristic
    \begin{align*}
        \#^p\ssh(S)&=-\chi(\ssh(S))\\
        &=-V+E-F\\
        &=V-F\le V\le j\binom{r}{2}.
    \end{align*}
\end{proof}

\begin{definition}
    Suppose $[g,n]\neq[0,3]$. Let $M(g,n)$ be the maximum cardinality of a minimal filling set, and $m^j(g,n)$ the minimum cardinality of a filling $j$-system, on a $[g,n]$-surface, that is,
    $$M(g,n):=\max_{S \text{ minimal filling}}\#S,$$
    $$m^j(g,n):=\min_{S \text{ filling } j-\text{system}}\#S.$$
\end{definition}

\begin{lemma}
\label{minimalcardinality}
    We have the following estimate: $$m^j(g,n)>\sqrt{\frac{4g-4+2n}{j}}.$$
\end{lemma}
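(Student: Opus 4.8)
The plan is to combine the two preceding lemmas with the definition of filling. Let $S$ be a filling $j$-system on a $[g,n]$-surface $X$ that achieves the minimum, so that $r := \#S = m^j(g,n)$. The first step is to observe that, since $S$ is filling, its subsurface hull is the entire surface: $\ssh(S) = X$, and hence $\#^p\ssh(S) = \#^p(X)$. This is immediate from the definition of filling (every non-peripheral curve can be homotoped onto $\bigcup S$) together with the definition of the subsurface hull as the minimal subsurface containing $\bigcup S$.

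Next I would invoke Lemma \ref{maxhull}, which bounds the complexity of the hull of any $j$-system of $r$ curves, and then Remark \ref{pantscardinatlity} to rewrite $\#^p(X)$ in terms of the Euler characteristic. For a $[g,n]$-surface one has $-\chi(X) = 2g-2+n$, so chaining the estimates gives
$$
4g - 4 + 2n = 2\,(-\chi(X)) = 2\,\#^p\ssh(S) \le 2j\binom{r}{2} = j\,r(r-1) < j\,r^2,
$$
where the final strict inequality uses $r-1 < r$, valid since $r \ge 1$. Dividing by $j$ and taking square roots yields $r > \sqrt{(4g-4+2n)/j}$, which is exactly the claim because $r = m^j(g,n)$.

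There is no serious obstacle here; the argument is a direct chain of inequalities assembled from results already established. The only two points that require any care are the identification $\ssh(S) = X$ for a filling set, and the source of the strict inequality, which comes not from Lemma \ref{maxhull} but from the elementary estimate $r(r-1) < r^2$. I would emphasize that this is why the bound is strict rather than merely $\ge$, and that the whole content of the statement is really a reformulation of the quadratic growth $\#^p\ssh(S) = O(jr^2)$ against the linear complexity $-\chi = 2g-2+n$ of the surface being filled.
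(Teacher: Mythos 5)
Your proof is correct and follows essentially the same route as the paper: both identify $\ssh(S)=X$ for a filling $j$-system and combine Remark \ref{pantscardinatlity} with Lemma \ref{maxhull} to obtain $2g-2+n\le j\binom{r}{2}$, from which the bound follows. The only difference is that you make explicit the source of the strict inequality (namely $r(r-1)<r^2$ for $r\ge1$), which the paper leaves implicit.
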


\begin{proof}
    Let $S=\{\gamma_1,\cdots,\gamma_r\}$ be a $j$-system that is filling a $[g,n]$-surface $X$, then $\ssh(S)=X$. By Remark \ref{pantscardinatlity} and Lemma \ref{maxhull}, we have
    $$2g-2+n\le j\binom{r}{2}.$$ Therefore, $$r>\sqrt{\frac{4g-4+2n}{j}}.$$
\end{proof}

\begin{remark}
    For other estimates, one can refer to \cite{anderson2011small} and \cite{fanoni2015filling}.
\end{remark}

\begin{lemma}
\label{iteration}
Suppose $[g,n](X)\neq[0,2]$ or $[0,3]$, then there exists a proper $[g',n']$-subsurface of $X$, such that
$$M(g,n)\le 1+M(g',n').$$
\end{lemma}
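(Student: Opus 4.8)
The plan is to realize $M(g,n)$ by a minimal filling set, delete a single curve, and show that the remainder is minimal filling on its subsurface hull, which is necessarily a proper subsurface; the inequality then drops out by comparing cardinalities.

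First I would fix a $[g,n]$-surface $X$ and a minimal filling set $S$ on $X$ with $\#S=M(g,n)$; this is legitimate since $[g,n]\neq[0,2],[0,3]$ means $M(g,n)$ is defined. A single simple closed geodesic can never fill a hyperbolic surface (its complement is never a union of polygons), so $\#S\ge2$; fix any $\gamma\in S$. Put $Y:=\ssh(S\setminus\{\gamma\})$ and $[g',n']:=[g,n](Y)$. By the characterization of minimal filling (deleting any single curve destroys the filling property), $S\setminus\{\gamma\}$ does not fill $X$, and therefore $Y$ is a \emph{proper} subsurface of $X$.

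The heart of the argument is to show that $S\setminus\{\gamma\}$ is minimal filling in $Y$. It fills $Y$ directly from the definition of the subsurface hull. For minimality I would use the following reduction principle: if $T\subseteq S\setminus\{\gamma\}$ fills $Y$, then $\ssh(T)=Y$, so every subsurface of $X$ containing $\bigcup T$ contains $Y$; consequently the minimal subsurface of $X$ containing $\bigcup T\cup\gamma$ equals the minimal subsurface containing $Y\cup\gamma$, which, since $Y=\ssh(S\setminus\{\gamma\})$, is the minimal subsurface containing $\bigcup S$, namely $\ssh(S)=X$. Thus
$$\ssh(T\cup\{\gamma\})=X,$$
i.e.\ $T\cup\{\gamma\}$ fills $X$. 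Applying this with $T=(S\setminus\{\gamma\})\setminus\{\delta\}$ for an arbitrary $\delta\in S\setminus\{\gamma\}$: were $T$ to fill $Y$, then $T\cup\{\gamma\}=S\setminus\{\delta\}$ would fill $X$, contradicting the minimality of $S$. Hence deleting any single curve from $S\setminus\{\gamma\}$ leaves a non-filling subset of $Y$, which is exactly minimality of $S\setminus\{\gamma\}$ in $Y$. In particular $Y$ carries a filling set, so it is neither an annulus nor a pair of pants; thus $[g',n']\neq[0,2],[0,3]$ and $M(g',n')$ is defined.

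Finally, since $S\setminus\{\gamma\}$ is a minimal filling set on the $[g',n']$-surface $Y$, its cardinality is at most the maximum over such sets, so
$$M(g,n)-1=\#(S\setminus\{\gamma\})\le M(g',n'),$$
which rearranges to the claimed bound. I expect the delicate point to be the reduction principle — verifying, straight from the definition and minimality of the subsurface hull, that passing to the hull of a subset and then re-adding $\gamma$ recovers all of $X$ — since every other step is bookkeeping once that identity is established.
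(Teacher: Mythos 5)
Your proposal is correct and follows essentially the same route as the paper's proof: realize $M(g,n)$ by a minimal filling set $S$, delete one curve $\gamma$, note that $Y=\ssh(S\setminus\{\gamma\})$ is a proper subsurface by minimality, and show $S\setminus\{\gamma\}$ is minimal filling on $Y$ via the hull identity $\ssh(T\cup\{\gamma\})=\ssh(\ssh(T)\cup\gamma)$ --- your ``reduction principle'' is precisely the paper's chain of equalities $\ssh(S\setminus\{\gamma_{r-1}\})=\ssh(S\setminus\{\gamma_{r-1},\gamma_r\},\gamma_r)=\ssh(Y_2,\gamma_r)=\ssh(Y_1,\gamma_r)=X$, just spelled out from the minimality property of the hull. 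The only slip is your side claim that $[g',n']\neq[0,2]$: when $M(g,n)=2$ (e.g.\ $(g,n)=(1,1)$, where any two distinct simple closed geodesics intersect and hence fill), $S\setminus\{\gamma\}$ is a single geodesic whose hull is a $[0,2]$-surface under the paper's conventions; this is harmless, however, because the paper's definition of $M$ excludes only $[0,3]$ and sets $M(0,2)=1$, so the final inequality still holds in that case.
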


\begin{proof}
    Let $S=\{\gamma_1,\cdots,\gamma_r\}$ be a minimal filling set such that $r=M(g,n)$. Set $Y_1=\ssh(S\setminus\{\gamma_r\})$, then $Y_1\subsetneqq X$ by minimality. To show the minimality of $S\setminus\{\gamma_r\}$, we remove any curve, say $\gamma_{r-1}$, and set $Y_2=\ssh(S\setminus\{\gamma_{r-1},\gamma_r\})$. Note that $Y_2\subsetneqq Y_1$, otherwise we have
    \begin{align*}
        &\ssh(S\setminus\{\gamma_{r-1}\})=\ssh(S\setminus\{\gamma_{r-1},\gamma_r\},\gamma_r)\\
        =&\ssh(Y_2,\gamma_r)=\ssh(Y_1,\gamma_r)=X,
    \end{align*}
    which is contradictory to minimality of $S$. Let $[g',n']$ be the type of $Y_1$, then minimality of $S\setminus\{\gamma_r\}$ implies
    \begin{align*}
        M(g,n)=\#S=1+\#(S\setminus\{\gamma_r\})\le 1+M(g',n').
    \end{align*}
\end{proof}

\begin{remark}
    This process will not yield any $[0,3]$-subsurfaces.
\end{remark}

\begin{theorem}
\label{maxm}
    We have $$M(0,2)=1$$ and $$M(g,n)\le 3g+n.$$
\end{theorem}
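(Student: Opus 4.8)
The plan is to establish the first assertion $M(0,2)=1$ directly and then prove $M(g,n)\le 3g+n$ by strong induction on the complexity $c(g,n):=3g+n$, descending along the proper subsurface produced by Lemma \ref{iteration} and using the annulus $[0,2]$ as the base case.

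For the base case, note that on a $[0,2]$-surface (a circle or annulus) every essential simple closed curve is isotopic to the core, so the only simple closed geodesic is the core and any filling set must consist of that single curve. Hence $M(0,2)=1$, which settles the first claim and records $M(0,2)=1\le 2=3\cdot 0+2$ for the induction.

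The key step is to show that passing to a proper subsurface strictly decreases $c$. Writing $\kappa:=-\chi=2g-2+n$, one has by Gauss--Bonnet that $2\pi\kappa$ is the area, and a direct computation gives $3g+n=g+\kappa+2$. If $Y\subsetneqq X$ is a proper $[g',n']$-subsurface, then genus is monotone, $g'\le g$, and area is monotone, $\kappa'\le\kappa$, since $\intr(Y)$ is isometrically embedded in $X$. Therefore
$$c(g,n)-c(g',n')=(g-g')+(\kappa-\kappa')\ge 0,$$
with equality only if $g=g'$ and $\kappa=\kappa'$, that is, only if $Y$ and $X$ share the same type and the same area; as $\intr(Y)\subseteq\intr(X)$ this forces $\intr(Y)=\intr(X)$ and hence $Y=X$, contradicting properness. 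Since $c$ is integer-valued, we conclude $c(g',n')\le c(g,n)-1$.

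With these ingredients the induction closes quickly. For $[g,n]\neq[0,2],[0,3]$, Lemma \ref{iteration} supplies a proper subsurface of type $[g',n']$, which is not $[0,3]$ by the remark following that lemma, and satisfies $M(g,n)\le 1+M(g',n')$; since $c(g',n')<c(g,n)$ the induction hypothesis yields $M(g',n')\le 3g'+n'$, whence $M(g,n)\le 1+(3g'+n')\le 1+(3g+n-1)=3g+n$. I expect the only real obstacle to be the equality analysis in the complexity-decrease step---confirming that equal genus together with equal area forces the subsurface to coincide with the whole surface---since the rest is bookkeeping once Lemma \ref{iteration} is available.
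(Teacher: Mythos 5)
Your overall skeleton matches the paper's: both arguments iterate Lemma \ref{iteration} down to the $[0,2]$ base case, and both hinge on the key fact that the complexity $f(g,n)=3g+n$ strictly decreases when passing to a proper subsurface. Where you genuinely differ is in how that strict decrease is established. The paper classifies the maximal proper subsurfaces of a $[g,n]$-surface---they have type $[g-1,n+2]$ (cut along a non-separating geodesic) or $[g,n-1]$ (cut off a pair of pants), each dropping $f$ by exactly $1$---and then uses the fact that every proper subsurface lies at the end of a chain of maximal ones. You instead decompose $3g+n=g+\kappa+2$ with $\kappa=-\chi$ and invoke two monotonicity statements: genus monotonicity under embedding, and area monotonicity via Gauss--Bonnet. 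Your route is more intrinsic and avoids the classification-and-chain claim that the paper asserts without proof; on the other hand, the paper's bookkeeping along the full chain actually yields the slightly sharper bound $M(g,n)\le 3g+n-1$, whereas your one-step induction gives exactly $3g+n$.

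There is, however, one incomplete step, precisely at the point you yourself flagged. You assert that containment $\intr(Y)\subseteq\intr(X)$ together with equal area ``forces $\intr(Y)=\intr(X)$.'' As stated, that implication is false: if $Y$ is obtained by cutting $X$ along a non-separating simple closed geodesic $\gamma$, then $\intr(Y)=X\setminus\gamma$ is properly contained in $X$ yet has the same area, because the complement is a measure-zero union of geodesics. What rescues your argument is the equal-genus hypothesis, which your written justification never actually uses: equal area only tells you that the complement of $\intr(Y)$ is a finite union of closed geodesics, i.e.\ that $X$ is recovered from $Y$ by isometrically gluing boundary geodesics of $Y$ in pairs; each such gluing leaves $\chi$ unchanged while reducing the number of boundary circles by two, hence raises the genus by one. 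Equal genus therefore forces the number of gluings to be zero, so $Y=X$, contradicting properness. With that short argument inserted in place of the one-line claim, your proof is correct.
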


\begin{proof}
    For a $[g,n]$-surface $X$, there are two types of maximal proper subsurfaces: $$[g-1,n+2], [g,n-1],$$ as long as the numbers are nonnegative and the Euler characteristic is negative. They are obtained by cutting $X$ along a non-separating or separating simple closed geodesic. Note that any proper subsurface can be obtained through a chain of maximal proper subsurfaces. Let $f(g,n)=3g+n$, then $f(Y)<f(X)$ for any proper subsurface $Y\subset X$. By Lemma \ref{iteration}, let $$Y_k\subsetneqq Y_{k-1}\subsetneqq\cdots\subsetneqq Y_1\subsetneqq X$$ be a sequence of subsurfaces, such that $Y_i$ is maximal in $Y_{i-1}$, and $Y_k$ is a $[0,2]$-subsurface. Therefore, $$3g+n=f(X)\ge f(Y_1)+1\ge \cdots \ge f(Y_k)+k= 2+k$$ and $$M(X)\le 1+M(Y_1)\le\cdots\le k+M(0,2)=k+1\le 3g+n-1.$$
    Note that $[0,3]$ is skipped in the descending process so we have $$M(g,n)\le 3g+n.$$
\end{proof}

\begin{definition}
The \textit{j-capacity} $\cp^j(Y)$ of a subsurface $Y$ is the maximum cardinality of a $j$-system on $Y$, that is,
    $$\cp^j(Y):=\max_{S j\text{-system}}\#S.$$
\end{definition}

\begin{theorem}
\label{maxcap}
We have the following estimate on $j$-capacity:
    $$\cp^j(g,n)\le M(g,n)+(2jM(g,n)(M(g,n)-1))^{jM(g,n)}.$$
\end{theorem}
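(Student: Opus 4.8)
The plan is to bound the size of an arbitrary $j$-system $S$ on a $[g,n]$-surface by first extracting a \emph{minimal filling subset} $F\subseteq S$, and then bounding how many \emph{additional} curves of $S$ can be laid on top of the fixed subsurface $\ssh(F)$. Concretely, let $S$ be a maximal $j$-system, so $\#S=\cp^j(g,n)$. Since $S$ fills $X$ (otherwise we could enlarge it, or pass to the subsurface it fills), we can select a minimal filling subset $F\subseteq S$; by definition $\#F\le M(g,n)$. The curves in $F$ already fill $X$, so every remaining curve $\delta\in S\setminus F$ must intersect $\bigcup F$. The key idea is that such a $\delta$ is determined, up to the data we can bound, by its pattern of intersections with the curves of $F$ — i.e.\ by how $\delta$ is cut into arcs by the $1$-complex $\bigcup F$. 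If two distinct curves $\delta,\delta'\in S\setminus F$ induce the \emph{same} combinatorial arc-pattern relative to $F$, I expect one can derive a contradiction with the $j$-system condition (they would have to intersect each other too many times, or be freely homotopic). Hence $\#(S\setminus F)$ is at most the number of possible arc-patterns.

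The main counting step goes as follows. Each $\delta\in S\setminus F$ intersects each $\gamma\in F$ at most $j$ times, so the total number of intersection points of $\delta$ with $\bigcup F$ is at most $j\cdot\#F\le jM(g,n)$. These intersection points cut $\delta$ into at most $jM(g,n)$ arcs, each running through a single complementary region of $\bigcup F$ with endpoints on the edges of that region. I would encode $\delta$ by the cyclic sequence of arcs, where each arc is recorded by which boundary edges of its complementary region its two endpoints lie on. The number of complementary regions and their boundary edges is controlled by $F$: a minimal filling set of $m:=\#F\le M(g,n)$ curves has at most $\binom{m}{2}j$ (really at most $jm(m-1)$, counting orientations/sides) intersection points, and hence a bounded number of edges for the arcs to attach to. Counting the number of choices per arc as at most $2jM(g,n)(M(g,n)-1)$ and the number of arcs as at most $jM(g,n)$ gives the stated bound $(2jM(g,n)(M(g,n)-1))^{jM(g,n)}$ on $\#(S\setminus F)$, so that
\begin{equation*}
\cp^j(g,n)=\#S=\#F+\#(S\setminus F)\le M(g,n)+\bigl(2jM(g,n)(M(g,n)-1)\bigr)^{jM(g,n)}.
\end{equation*}

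The step I expect to be the main obstacle is proving that \emph{distinct arc-patterns are genuinely needed} — that is, that two curves in $S\setminus F$ with identical combinatorial intersection data with $F$ cannot both lie in a $j$-system. The naive hope is that identical arc-patterns force the two geodesics to be freely homotopic (hence equal as geodesics), but homotopic arcs within a complementary polygon can still be distinct geodesics that spiral differently, and bounding their mutual intersection number by $j$ requires care. I would try to handle this by working in the complementary regions, which are polygons, once-punctured polygons, or once-holed polygons (from the filling condition), and arguing that in such pieces the homotopy class of the arc rel its endpoints is fully pinned down by the boundary-edge data, up to finitely many choices already absorbed into the counting factor $2jM(g,n)(M(g,n)-1)$ (the factor $2$ and the ordered pair of edges providing exactly the slack needed to distinguish the finitely many homotopy classes of arcs in each region). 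Making this bookkeeping precise — in particular checking that the once-holed polygon case does not introduce unboundedly many arc classes — is where the real work lies; the global count is then a routine product over the at most $jM(g,n)$ arcs.
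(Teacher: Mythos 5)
Your proposal follows essentially the same route as the paper's proof: extract a minimal filling subset $S_0\subseteq S$ with $\#S_0\le M(g,n)$, cut each remaining curve into at most $jM$ arcs by $\bigcup S_0$, and count each arc by which of the at most $jM(M-1)$ edges of the graph $\bigcup S_0$ it ends on, times a factor of $2$ for its homotopy class, giving $(2jM(M-1))^{jM}$. The obstacle you flag is resolved in the paper exactly as you anticipate: because $\gamma$ is simple, each arc in a complementary (possibly once-punctured or once-holed) polygon admits at most two homotopy classes rel its endpoint data, so the once-holed case introduces no unbounded winding.
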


\begin{proof}
    Note that given a filling $j$-system, the subset that has the smallest cardinality while filling is always a minimal filling subset. Let $S$ be a $j$-system on a $[g,n]$-surface $X$, then there exists a minimal filling subset $S_0\subset S$. By definition, there is $\#S_0\le M=M(X)$. Any $\gamma\in S\setminus S_0$ can be obtained from $S_0$ in the following sense.
    
    Let
    $$\delta_1,\delta_2,\cdots,\delta_l,$$ be all the elements in $S_0$ that intersect $\gamma$, ordered by an orientation of $\gamma$, where $\delta_i$'s are not necessarily distinct but each element of $S_0$ appears at most $j$ times. Consequently, $l\le jM$. Let $\gamma\setminus\cup S_0=\cup\gamma_i$, where $\gamma_i$ is a segment of $\gamma$ that connects $\delta_i$ and $\delta_{i+1}$. The segment $\gamma_i$ lives in a convex polygon or once-punctured or once-holed convex polygon that has segments of $\delta_i$ and $\delta_{i+1}$ cut by $S_0$ as two sides. Note that there are at most $jM(M-1)$ edges of $S_0$ considered as a graph.

    Given the initial and final point of $\gamma_i$, there are at most two homotopy classes for $\gamma_i$ as $\gamma$ is simple, therefore we get an upper bound of all topological possibilities of such $\gamma$: $$(jM(M-1))^l\cdot 2^{l},$$
    and thus $$\cp^j(g,n)\le M+(2jM(M-1))^{jM}.$$
\end{proof}

For the case of $j=1$, we have the polynomial result due to Przytycki.
\begin{theorem}[\cite{przytycki2015arcs}]
\label{prz}
    $$\cp^{1}(g,n)\le g(4g+2n-3)^2+2g+n-3.$$
\end{theorem}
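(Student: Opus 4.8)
The plan is to induct on the genus, cutting the surface open along a single curve of the system at each step and converting the curves that cross it into \emph{arcs}, for which a polynomial bound is available. Throughout write $\xi:=-\chi=2g+n-2$, so that the quantity $4g+2n-3$ in the statement equals $2\xi+1$. Let $S$ be a $1$-system on a $(g,n)$-surface $X$ with $\#S=\cp^1(g,n)$.

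First the base case $g=0$. On a sphere with punctures every essential simple closed curve is separating, so any two of them have even geometric intersection number (their mod-$2$ intersection vanishes); being a $1$-system then forces the members of $S$ to be pairwise disjoint, and a disjoint system on a $(0,n)$-surface has at most $3\cdot 0-3+n=n-3$ curves. This matches the claimed bound with $g=0$. The same parity remark shows that at any stage, if \emph{every} curve of $S$ is separating then $S$ is a disjoint system and $\#S\le 3g-3+n$, which lies below $g(2\xi+1)^2+2g+n-3$; so we may assume $g\ge1$ and fix a non-separating $\alpha\in S$.

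Now cut $X$ along $\alpha$ and cap the two new boundary circles with punctures $p_1,p_2$, obtaining a $(g-1,n+2)$-surface $X'$ with $-\chi(X')=\xi$ unchanged. Each $\gamma\in S\setminus\{\alpha\}$ meets $\alpha$ in $0$ or $1$ points. Those disjoint from $\alpha$ descend to an essential $1$-system on $X'$ (none becomes inessential or peripheral, as either would force $\gamma$ isotopic to $\alpha$, and distinct curves stay distinct since disjointness from $\alpha$ rules out a Dehn-twist ambiguity), contributing at most $\cp^1(g-1,n+2)$. Each remaining curve, meeting $\alpha$ exactly once, becomes a single arc from $p_1$ to $p_2$; as all their crossings with $\alpha$ occur at distinct points, these arcs still pairwise intersect at most once. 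Hence
$$\cp^1(g,n)\;\le\;1+\cp^1(g-1,n+2)+A,$$
where $A$ counts arcs from $p_1$ to $p_2$ that pairwise intersect at most once.

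The crux, and the main obstacle, is bounding $A$. Here one invokes Przytycki's arc estimate \cite{przytycki2015arcs}: on a surface with $-\chi=\xi$ the number of pairwise non-isotopic arcs with endpoints at punctures that pairwise intersect at most once is at most $2\xi(\xi+1)$; in particular $A\le 2\xi(\xi+1)\le 4\xi(\xi+1)=(2\xi+1)^2-1$. The proof of this arc bound is the genuine work: put the arcs in minimal position and run a local count at each puncture, where arcs sharing a germ cut off bigons or once-punctured regions, so that an Euler-characteristic bookkeeping limits both the number of directions at a puncture and the multiplicity within each direction --- this is exactly where the ``at most one intersection'' hypothesis is used. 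Granting it, $1+A\le(2\xi+1)^2$, and unfolding the recursion $g$ times (lowering the genus to $0$ and raising the puncture count to $n+2g$) gives
$$\cp^1(g,n)\;\le\;\sum_{i=1}^{g}(1+A_i)+\cp^1(0,n+2g)\;\le\;g(2\xi+1)^2+(n+2g-3).$$
Since $2\xi+1=4g+2n-3$ and $n+2g-3=2g+n-3$, this is precisely the asserted bound $g(4g+2n-3)^2+2g+n-3$.
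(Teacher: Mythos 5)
The paper itself gives no proof of this statement --- Theorem \ref{prz} is quoted directly from Przytycki's paper, where the curve bound is derived from his arc theorem by essentially the reduction you outline (induct on genus, cut along a non-separating curve of the system, convert crossing curves into arcs). So your strategy is the standard one; note, though, that your write-up is not self-contained: the entire analytic core, the bound $2\xi(\xi+1)$ on essential arcs pairwise intersecting at most once, is invoked from that same source and only sketched in words, so what you have really proved is the implication ``arc theorem $\Rightarrow$ curve theorem.''

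Within that reduction there is one genuine gap: the inequality $\cp^1(g,n)\le 1+\cp^1(g-1,n+2)+A$ is false as written, because the map from curves crossing $\alpha$ once to isotopy classes of arcs from $p_1$ to $p_2$ need not be injective. If $\gamma$ crosses $\alpha$ exactly once, so does the Dehn twist $T_\alpha\gamma$, and $i(\gamma,T_\alpha\gamma)=1$ (a regular neighborhood of $\gamma\cup\alpha$ is a one-holed torus), so both curves can lie in the $1$-system $S$; yet after cutting along $\alpha$ and capping, $\gamma$ and $T_\alpha\gamma$ become \emph{isotopic} arcs, since $T_\alpha$ is supported in an annulus that becomes inessential in $X'$. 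Thus distinct curves can collapse to the same arc class. The collapsing is at most $2$-to-$1$: three curves over one arc class would force two of them to differ by $T_\alpha^{\pm2}$, hence to intersect twice. The correct recursion is therefore $\cp^1(g,n)\le 1+\cp^1(g-1,n+2)+2A$. Your final bound survives only because of the factor-of-two slack you noticed but did not use: $1+2A\le 1+4\xi(\xi+1)=(2\xi+1)^2$ exactly, with no room to spare --- which is a strong hint that this multiplicity is genuinely present in extremal configurations and must be accounted for, as it is in the original derivation.
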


\section{Essentiality of subsurfaces}
\label{essentialsection}

In the moduli space $\mathcal M_{1,1}$, the surface that is conformal to the Euclidean punctured square torus, as shown in Figure \ref{fig:(1,1)_1}, is a critical point for the systole function. There are two shortest geodesics on this surface, say $\alpha$ and $\beta$. By symmetry, one obtains $\nabla l_\alpha+\nabla l_\beta=0$. We may ask a simple question: Does this happen to any other surface where two curves have negative or collinear gradient vectors in the tangent space of its $\teich$ space?

A \textit{length gradient} is the (Weil-Petersson) gradient vector of the geodesic-length function associated to a (simple) closed geodesic on a surface (seen as a marking), that lives in the tangent space of the $\teich$ space.

On a hyperbolic surface $X$, take a pair of subsurfaces, say $Y_1$ properly contained in $Y_2$. Going from $Y_1$ to $Y_2$, it obviously increases the dimension of the tangent subspace of the Teichm\"uller space. Given a set $V$ of length gradients in $T_X\tgn$, one can restrict them to the tangent subspace $T_X^{Y_i}\tgn$ to get $V_i$, $i=1,2$. It is natural to expect $$\rank (V_1)<\rank (V_2),$$ for any, or at least for generic $X\in\tgn$.

Here is a slightly different setting. Start with a curve set $S_i$ on $Y_i$, $i=1,2$, with $S_1\subset S_2$, then possibly under the condition that $\ssh(S_i)=Y_i$, it should be expected that
\begin{equation*}
\label{rankleap}
    \rank\{\nabla l_\gamma\}_{\gamma\in S_1}<\rank\{\nabla l_\gamma\}_{\gamma\in S_2}. \tag{$\star$}
\end{equation*}

\begin{definition}
    (1) A subsurface is called \textit{essential} if each complementary region is a [0,3]-surface, otherwise it is called \textit{non-essential}. See Figure \ref{fig:Nonessential subsurface} for an example.
    
    (2) For a subsurface $Y\subset X$, the \textit{essential closure} $\overline Y$ of $Y$ is the maximal embedded subsurface of $X$ that $Y$ is essential in with $\partial\overline Y\subset\partial Y$. This should not be confused with topological closure. We write $\overline{\ssh}(\cdot)=\overline{\ssh(\cdot)}$.
\end{definition}

\begin{remark}
    Equivalently, a subsurface is essential if no complementary region contains a [1,1] or [0,4]-subsurface.
\end{remark}

\begin{lemma}
\label{essential closure upper bound}
    Let $Y$ be a subsurface, then $\#^p(\overline Y)<2\#^p(Y)$+2.
\end{lemma}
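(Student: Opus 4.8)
The plan is to identify $\overline Y$ concretely and then run an Euler-characteristic count. First I would unwind the definition. Since $\overline Y$ contains $Y$ as an essential subsurface, every component of $\overline Y\setminus Y$ is a $[0,3]$-surface. The condition $\partial\overline Y\subset\partial Y$ is the decisive constraint: it forbids the boundary of $\overline Y$ from containing any geodesic that is not already a boundary geodesic of $Y$. Consequently one cannot enlarge $Y$ by carving a $[0,3]$-piece out of the interior of a larger complementary region, since that would create a new boundary curve lying outside $\partial Y$; one may only absorb \emph{entire} complementary regions of $Y$ in $X$. Such a region can be absorbed exactly when it is itself a $[0,3]$-surface, for otherwise $Y$ would fail to be essential in $\overline Y$. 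By maximality, $\overline Y$ is therefore $Y$ together with precisely those complementary regions of $Y$ in $X$ that are $[0,3]$-surfaces.

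Next I would count. Let $t$ be the number of absorbed $[0,3]$-regions. Gluing along boundary circles is additive on Euler characteristic (each circle has $\chi=0$) and each $[0,3]$-region contributes $\chi=-1$, so
\[
\#^p(\overline Y)=-\chi(\overline Y)=-\chi(Y)+t=\#^p(Y)+t,
\]
and everything reduces to bounding $t$. Each absorbed region meets $Y$ along at least one boundary geodesic of $Y$, and distinct regions meet $Y$ along disjoint families of boundary geodesics (a boundary circle borders exactly one complementary region); hence $t\le b$, where $b$ is the number of boundary components of $Y$. If $Y$ has genus $g$ and $p$ punctures, then $\#^p(Y)=-\chi(Y)=2g-2+p+b$, so that $b=\#^p(Y)+2-2g-p$. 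Combining these gives $\#^p(\overline Y)=\#^p(Y)+t\le\#^p(Y)+b=2\#^p(Y)+2-2g-p\le 2\#^p(Y)+2$.

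I expect the main obstacle to be not the arithmetic but the structural step in the first paragraph: one must argue carefully, using $\partial\overline Y\subset\partial Y$ together with essentiality, that the closure absorbs complementary regions only in whole $[0,3]$-pieces, so that the additive Euler-characteristic bookkeeping is legitimate. The second delicate point is the strict inequality itself. It is immediate as soon as $Y$ carries any genus or any puncture, since each such feature strictly lowers the final bound. The borderline configuration is the planar case $g=p=0$ in which every boundary geodesic of $Y$ cuts off a twice-punctured disk (the only $[0,3]$-region meeting $Y$ in a single curve, forcing $t=b$); I would examine this case separately, as it is where the strict bound is tightest and where an additional standing hypothesis on the subsurfaces in play, or a more refined count, is most likely needed.
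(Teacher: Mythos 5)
Your argument is essentially the paper's own proof: both identify $\overline Y$ as $Y$ with the $[0,3]$-components of the complement attached, note that each attachment raises $\#^p$ by exactly $1$, and bound the number of attachments by the number of boundary curves of $Y$; your bookkeeping (tracking genus and punctures separately to get $\#^p(\overline Y)\le 2\#^p(Y)+2-2g-p$) is if anything slightly sharper than the paper's.

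Your worry about the borderline planar case is well founded, and no refined count will dispose of it: the strict inequality in the statement is genuinely false there. Take $Y$ a pair of pants with three geodesic boundary circles inside a $(0,6)$-surface $X$, each complementary region a twice-punctured disk. Then $Y$ is essential in $X$ and $\partial X=\emptyset\subset\partial Y$, so $\overline Y=X$, while
$$\#^p(\overline Y)=4=2\#^p(Y)+2.$$
More generally, any planar $Y$ without punctures, all of whose boundary circles cut off twice-punctured disks, gives equality. The paper's proof has exactly the same limitation: its final step $\#^p(Y)+n(Y)=2g(Y)+2n(Y)-2\le 2\#^p(Y)+2$ is an equality precisely when $g(Y)=0$, so it too only establishes the non-strict bound. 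So this is a defect of the statement rather than a gap in your argument: the lemma should be stated with $\le$, and the non-strict inequality is all that is used downstream (in Theorem \ref{induction} and in Section \ref{logloglowerbound}). Your only genuine imprecision is the claim that a complementary region can be absorbed ``exactly when'' it is a $[0,3]$-surface; if such a region has a boundary circle on $\partial X$, absorbing it would violate $\partial\overline Y\subset\partial Y$, so only the ``only if'' direction holds --- but since omitting such regions only decreases $\#^p(\overline Y)$, the upper bound is unaffected (and in the paper's setting $X$ is a complete surface, so $\partial X=\emptyset$ anyway).
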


\begin{proof}
    Consider the complement $Y^c$. To get $\overline Y$, one attaches all $[0,3]$-components of $Y^c$ to $Y$ along its geodesic boundary components. Every attaching operation decreases the number of boundary components by 1 or 2 and increases $\#^p$ by 1. Therefore, there can be at most $n(Y)$ attaching operations, and thus $$\#^p(\overline Y)\le \#^p(Y)+n(Y)=2g(Y)+2n(Y)-2\le2\#^p(Y)+2.$$
\end{proof}

We show there is a leap of the rank of sets of length gradients, as in \eqref{rankleap}, when expanding a subsurface non-essentially.
\begin{lemma}
\label{essential}
    Let $S_1\subset S_2$ be two sets of curves on $X$, and $Y_i=\ssh(S_i)$, $i=1,2$. Suppose we have the conditions that
    
    (1) $Y_1\subsetneqq Y_2$,
    
    (2) $Y_1$ is not essential in $Y_2$.
    
    Then $$\rank\{\nabla l_\gamma\}_{\gamma\in S_1}<\rank\{\nabla l_\gamma\}_{\gamma\in S_2}.$$
\end{lemma}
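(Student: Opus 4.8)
The plan is to pass to the cotangent picture and produce a single witness. Since $S_1\subset S_2$ we automatically have $\spn\{\nabla l_\gamma\}_{\gamma\in S_1}\subseteq\spn\{\nabla l_\gamma\}_{\gamma\in S_2}$, so it suffices to exhibit one curve $\gamma'\in S_2$ with $\nabla l_{\gamma'}\notin\spn\{\nabla l_\gamma\}_{\gamma\in S_1}$. By nondegeneracy of the Weil--Petersson metric this is equivalent to finding a tangent vector $v\in T_X\tgn$ that is orthogonal to every $\nabla l_\gamma$, $\gamma\in S_1$ (that is, $d l_\gamma(v)=0$ for all such $\gamma$), while $d l_{\gamma'}(v)=\langle\nabla l_{\gamma'},v\rangle\neq0$ for some $\gamma'\in S_2$. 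So the whole proof reduces to constructing such a test deformation $v$.

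To build the candidate vectors I would use hypothesis (2). Since $Y_1$ is not essential in $Y_2$, some complementary region $R$ of $Y_1$ inside $Y_2$ is not a $[0,3]$-surface, so $R$ carries nontrivial internal moduli: after fixing the lengths of $\partial R$, the \teich\ space of $R$ still has positive dimension, and $R$ contains an essential, non-peripheral simple closed geodesic $\delta$ together with at least one interior pants curve. Let $D_R\subset T_X\tgn$ be the space of deformations that alter only the internal hyperbolic structure of $R$, fixing $\partial R$ and everything outside $R$; then $\dim D_R>0$. Every $v\in D_R$ annihilates $S_1$: each $\gamma\in S_1$ lies in the interior of $Y_1$, hence is disjoint from $R$, so by Wolpert's twist formula (and because the induced metric on $Y_1$ does not depend on the coordinates internal to $R$) we get $d l_\gamma(v)=0$. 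Thus $D_R$ lies in exactly the orthogonal complement we need, with the twist $\partial_{\tau_\delta}$ as an explicit nonzero element.

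It then remains to show that some $\gamma'\in S_2$ is detected by $D_R$, i.e. $d l_{\gamma'}|_{D_R}\neq0$ at $X$. Because $\ssh(S_2)=Y_2\supseteq R$ and $S_2$ fills $Y_2$, the geodesic $\delta$ (essential and non-peripheral in $Y_2$) cannot be disjoint from $\bigcup S_2$; hence either some $\gamma'\in S_2$ is isotopic to an interior curve of $R$, in which case a length coordinate of $R$ already detects it, or some $\gamma'\in S_2$ crosses $R$ essentially, with $i(\gamma',\delta)>0$. I expect the \emph{main obstacle} to be precisely this pointwise non-vanishing at the given $X$: the naive twist derivative $d l_{\gamma'}(\partial_{\tau_\delta})=\sum_{p\in\gamma'\cap\delta}\cos\theta_p$ can cancel, and a single length derivative can vanish at a critical configuration. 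My plan to defeat this is to exploit that $\dim D_R>0$ together with strict Weil--Petersson convexity of $l_{\gamma'}$ along the internal deformations of $R$ (Wolpert, Kerckhoff): a geodesic crossing $R$ essentially is critical on the leaf $\mathcal T(R)$ only at its unique minimizer, so two essentially-crossing curves of $S_2$ with distinct minimizers at once furnish the witness $v$. The genuinely delicate case is when a single curve of $S_2$ accounts for all of $R$ and $X$ happens to be exactly its minimizer; ruling this out — either by showing that filling a non-pants region forces at least two independent detecting curves, or by a direct Gardiner-type analysis of the $R$-component of $\nabla l_{\gamma'}$ — is where the real work lies.
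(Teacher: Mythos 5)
Your overall strategy matches the paper's: reduce to exhibiting a test vector that kills every $\nabla l_\gamma$, $\gamma\in S_1$, take that vector to be a deformation supported in a complementary region $R$ of $Y_1$ in $Y_2$ that is not a $[0,3]$-surface (twists $\tau_\delta$ along curves $\delta$ interior to $R$ being the explicit candidates), and detect it against a curve $\alpha\in S_2$ crossing $R$, which exists because $\ssh(S_2)=Y_2$. You also correctly isolate the crux: the pairing $\langle\nabla l_\alpha,\tau_\delta\rangle=\sum_p\cos\theta_p$ may vanish at the given $X$. But your proposed fix does not close this gap, and you concede as much. Two problems. First, your key claim --- that a curve crossing $R$ has $dl_\alpha|_{D_R}=0$ only at a unique minimizer on the leaf --- is not justified by the convexity results you cite: Wolpert's strict convexity is along Weil--Petersson geodesics of the full Teichm\"uller space and Kerckhoff's is along earthquake paths, and the leaf $D_R$ (fixed structure outside $R$, fixed boundary lengths and gluings) is not known to be totally geodesic, nor is it clear that two of its points are joined by an earthquake supported in $R$; so criticality of $l_\alpha$ on the leaf at $X$ need not force $X$ to be a minimizer. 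Second, even granting that claim, nothing guarantees two crossing curves with distinct minimizers: $S_2\setminus S_1$ may consist of a single curve $\alpha$, and a priori $X$ can sit exactly at the critical configuration of $l_\alpha|_{D_R}$. That is exactly the case you admit you cannot rule out, and it is the entire difficulty of the lemma.

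The paper's resolution is to vary the test \emph{curve} rather than the surface: choose $\delta\subset Z$ (your $R$) meeting $\alpha$, and an auxiliary simple closed geodesic $\lambda\subset Z$ meeting $\delta$, and consider the Dehn-twist images $\delta(n)=T_\lambda^n\delta$, which all lie in $Z$ and hence all give twist vectors orthogonal to $\{\nabla l_\gamma\}_{\gamma\in S_1}$. By Kerckhoff's lemma (Lemma 3.6 of the Nielsen realization paper, restated as Lemma \ref{threecurves}), the intersection angles of $\alpha$ with $\delta(t)$ vary strictly monotonically along the earthquake path $\mathcal E_\lambda(t)$ through $X$, so $\sum_i\cos\theta_i(t)$ is monotone with only finitely many zeros; evaluating at integer times, where the earthquake is a full Dehn twist and one is back at the same point of moduli space with remarked curves, produces an $n$ with $\langle\nabla l_\alpha,\tau_{\delta(n)}\rangle\neq0$ at $X$ itself. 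This ``twist the test curve along an auxiliary curve and invoke angle monotonicity'' step is the idea missing from your proposal; without it, or a genuine substitute, the delicate case remains open and the proof is incomplete.
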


Suppose that there are two curves $\alpha\in S_2\setminus S_1$ and $\delta\subset Y_2\setminus Y_1$ such that they intersect each other exactly once and non-orthogonally. By Kerckhoff's geodesic length-twist formula in \cite{kerckhoff1983nielsen}, we have $$\langle\nabla l_\alpha,\tau_\delta\rangle=\cos\theta(\alpha,\delta)\neq 0,$$ where $\tau_\delta$ is the tangent vector of the twisting path along $\delta$ at $X$. Intuitively, $\nabla l_\alpha$ reflects an extra dimension on top of the space spanned by $S_1$.

However, a random selection of $\alpha$ and $\delta$ does not guarantee the intersection condition. To find such a pair of curves, we utilize an auxiliary curve $\lambda$ and consider doing Dehn twists on $\delta$ along $\lambda$.

\begin{figure}[h]
    \includegraphics[width=7cm]{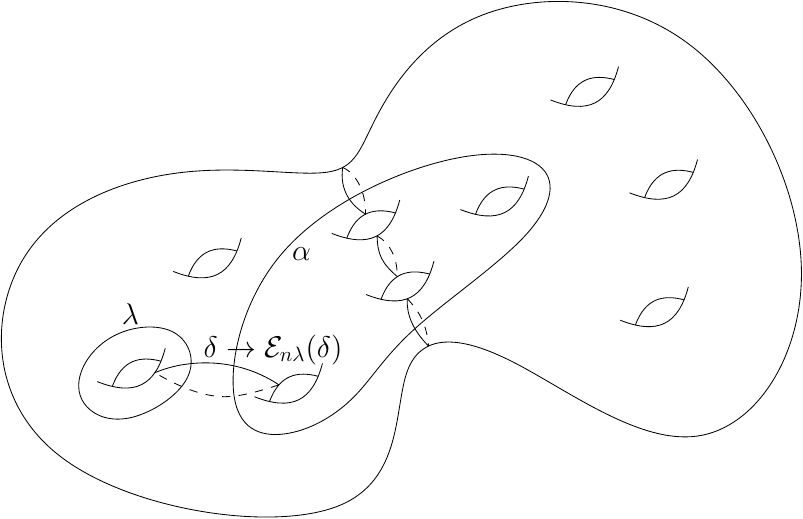}
    \caption{The right subsurface is non-essential; $\alpha$ and $\lambda$ are not necessarily disjoint}
    \label{fig:Nonessential subsurface}
\end{figure}

More precisely, we have the following lemma, where the conditions are shown in Figure \ref{fig:Nonessential subsurface}.

\begin{lemma}
\label{threecurves}
    Suppose $\alpha$, $\delta$, and $\lambda$ are three simple closed geodesics on a hyperbolic surface $X$, satisfying
    
    (1) $\delta$ and $\alpha$ intersect,
    
    (2) $\delta$ and $\lambda$ intersect.
    
    At time $t$ on the earthquake path $\mathcal E_\lambda$ through $X$, let $\alpha'(t)$ be the geodesic arc obtained from $\alpha$ by twisting the base surface along $\lambda$ by $t$; let $\delta(t)$ be equal to $\delta$ as markings; and let $\theta(t)$ be the angle of $\delta(t)$ and $\alpha'(t)$ at any given intersection. Then $\theta(t)$ is monotone along $\mathcal E_\lambda(t)$.
\end{lemma}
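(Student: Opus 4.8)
The plan is to lift the entire configuration to the universal cover $\mathbb{H}$ and track the motion of ideal endpoints of the relevant geodesics under the earthquake. I would normalize the developing map so that a fixed lift $\tilde\lambda$ of $\lambda$ is the imaginary axis and its holonomy $g=\rho(\lambda)$ is the translation $z\mapsto e^{\ell}z$ along $\tilde\lambda$; the earthquake $\mathcal E_\lambda(t)$ then acts by inserting the translation $g_t\colon z\mapsto e^{t}z$ along $\tilde\lambda$ at each crossing of a curve with $\lambda$. Fix a lift $\tilde q$ of the intersection point we are tracking; through it pass a lift of $\alpha'(t)$ and a lift of $\delta(t)$, and $\theta(t)$ is the angle between these two geodesics at $\tilde q$. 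Each such geodesic is the axis of a word into which finitely many factors $g_t$ have been inserted (one per crossing with $\lambda$), so its two ideal endpoints on $\partial\mathbb{H}$ are the fixed points of an explicit element of $\mathrm{PSL}_2(\mathbb R)$ whose matrix entries are analytic in $t$ through $e^{\pm t/2}$.

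The key reduction is conformal invariance. The cosine of the angle between two geodesics of $\mathbb{H}$ that cross is a M\"obius function of the cross-ratio of their four ideal endpoints; since a M\"obius function and $\arccos$ are each strictly monotone, $\theta(t)$ is a strictly monotone function of that cross-ratio on any interval avoiding the degenerate values. Hence it suffices to prove that the cross-ratio $\chi(t)$ of the four ideal endpoints — those of the lift of $\alpha'(t)$ and those of the lift of $\delta(t)$ — is strictly monotone in $t$ on the range of $t$ for which the tracked intersection persists. Writing $w=e^{t}$ and using Vieta's formulas, the symmetric functions of the two moving endpoints become rational in $w$, so $\chi$ is an explicit algebraic function of $w$ and the sign of $\tfrac{d}{dt}\chi$ is governed by a single auxiliary expression.

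In the model situation where exactly one of the two curves meets $\lambda$, and does so once, this auxiliary expression simplifies — after using $\det\rho=1$ — to an \emph{affine} function of $w$; in general it is the analogous Laurent-polynomial expression produced by the several inserted factors $g_t$. This is where the hypotheses enter. Condition (2), that $\delta$ meets $\lambda$, forces the ideal endpoints of the lift of $\delta(t)$ to straddle an endpoint of $\tilde\lambda$ on $\partial\mathbb{H}$, and condition (1), that $\delta$ meets $\alpha$, guarantees that the two lifts genuinely cross so that $\theta(t)$ is defined; together these translate into sign conditions on the endpoints that pin down the sign of the auxiliary expression on the relevant interval. A useful consistency check is the limiting behaviour: as $t\to\pm\infty$ one ideal endpoint of the moving geodesic converges to an endpoint of $\tilde\lambda$ (the curve wraps around $\lambda$), which fixes the direction of monotonicity.

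The main obstacle is this last step: verifying that the auxiliary expression has constant sign on the entire interval on which the intersection persists, so that $\chi$ has nonvanishing derivative there. This is exactly the point at which the transversality hypotheses are indispensable — for instance, were $\delta$ disjoint from $\lambda$ the angle could be constant — and it is also where the bookkeeping for a curve meeting $\lambda$ several times is most delicate, since the clean affine reduction available for a single crossing must be replaced by a sign analysis of the corresponding Laurent polynomial. I expect the cleanest route to be to isolate the affine (single-crossing) computation as the core case and to organize the general case so that the inserted factors $g_t$ contribute with a consistent sign dictated by the orientation of the crossings with $\lambda$.
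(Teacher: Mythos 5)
Your reduction is sound as far as it goes: lifting to $\mathbb{H}$, fixing the lift of $\alpha$, and observing that the angle is a monotone function of the cross-ratio of the four ideal endpoints is correct (with $\widetilde\alpha'$ normalized to have endpoints $0,\infty$ and $\overline\delta(t)$ endpoints $d_1<0<d_2$, one has $\cos\theta=\frac{d_1+d_2}{d_2-d_1}$, which is monotone in $d_1/d_2$). But the proposal then stalls exactly at the heart of the lemma. What must be shown is that the cross-ratio, equivalently the pair of ideal endpoints of $\overline\delta(t)$, moves monotonically in $t$; your plan is to extract this from the sign of a Laurent-polynomial expression in $w=e^{t}$ coming from the inserted shear matrices, and you yourself flag the constancy of that sign as ``the main obstacle.'' That obstacle is not a technicality to be organized away: when $\delta$ crosses $\lambda$ several times, the inserted factors are conjugates $h_i g_t h_i^{-1}$ along different leaves of the preimage $\widetilde\lambda$, the endpoint coordinates are roots of quadratics with high-degree Laurent-polynomial coefficients, and there is no evident algebraic mechanism forcing the derivative of the cross-ratio to have one sign for all $t$. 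So the proposal, as written, contains a genuine gap precisely where the lemma's content lies.

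The paper closes this gap not by computation but by quoting Kerckhoff's Lemma 3.6 (\cite{kerckhoff1983nielsen}): realizing the earthquake as shearing the complementary components of $\widetilde\lambda$, with the component meeting the tracked lift of $\alpha$ held fixed, \emph{both} ideal endpoints of the geodesic $\overline\delta(t)$ move strictly to the left as $t$ increases. The proof of that statement is geometric, not algebraic: every shear displaces points on the circle at infinity in the same direction, so their cumulative effect on the limit points of the broken lift of $\delta$ is monotone, uniformly over all crossing patterns — this is exactly the ``consistent sign dictated by the orientation of the crossings'' that your Laurent-polynomial bookkeeping is trying to capture, obtained for free from the one-sidedness of the earthquake. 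If you want to complete your argument, the efficient fix is to replace the polynomial sign analysis by this geometric monotone-motion statement (or simply cite Kerckhoff), after which your cross-ratio reduction finishes the proof as you describe.
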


This can be seen as a corollary of Lemma 3.6 in \cite{kerckhoff1983nielsen}. We restate it as follows with our notation. Figure \ref{fig:Intersection along earthquake} is modified on Kerckhoff's original picture in the same paper. Here, on the universal cover of $X$, $\widetilde\lambda$ is the preimage of $\lambda$, and the Thurston's earthquake is realized by shearing the components complementary to $\widetilde\lambda$ along $\widetilde\lambda$, where a given component is fixed (e.g. the one containing $\widetilde\delta_0$). In the picture, $\widetilde\delta_i$'s are segments that connect consecutive leaves of $\widetilde\lambda$ to give $\delta(t)$ on $\mathcal E_\lambda(t)$ through $X$ and $\widetilde\alpha'$ is a lift of $\alpha$. Let $\overline\delta(t)$ be the geodesic with endpoints $\lim_{n\to\pm\infty}{\widetilde\delta_n}$. $\theta$ is an intersection angle of $\overline\delta(t)$ and $\widetilde\alpha'$.

\begin{lemma}[\cite{kerckhoff1983nielsen}]
    The endpoints of $\overline\delta(t)$ move strictly to the left when $t$ increases.
\end{lemma}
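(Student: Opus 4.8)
The plan is to work entirely in the universal cover $\mathbb H$ with its boundary circle $\partial\mathbb H$, and to track the forward endpoint $p_+(t):=\lim_{n\to+\infty}\widetilde\delta_n$ and the backward endpoint $p_-(t):=\lim_{n\to-\infty}\widetilde\delta_n$ separately. Since the earthquake $\mathcal E_\lambda$ fixes the complementary component containing $\widetilde\delta_0$, the vertex of $\widetilde\delta(t)$ lying on the $n$-th leaf of $\widetilde\lambda$ crossed in the forward direction is obtained from its $t=0$ position by applying the earthquake cocycle $\Phi_n(t)=\sigma_1(t)\cdots\sigma_n(t)$, where each $\sigma_k(t)$ is the left hyperbolic translation of magnitude $t$ along the time-$t$ repositioned leaf $\ell_k$. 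The backward endpoint is governed symmetrically by the cocycle running through the leaves $\ell_0,\ell_{-1},\dots$, so it suffices to prove strict leftward motion of $p_+(t)$; the argument for $p_-(t)$ is identical after reversing the roles of the two sides of $\widetilde\delta_0$.

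First I would settle the single-leaf base case. Normalizing the relevant leaf $\ell$ to be the imaginary axis, a left shear is the hyperbolic translation $z\mapsto e^{s}z$ with $s$ of a definite sign dictated by the orientation convention for ``left'', and one computes directly that the forward endpoint of the broken path—the image under this translation of the portion of $\widetilde\delta$ beyond $\ell$—moves strictly monotonically along $\partial\mathbb H$ in the leftward sense. This is the elementary M\"obius computation underlying Kerckhoff's Lemma 3.6, and it pins down both the direction and the strictness for a single crossing.

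Next I would assemble the crossings. The forward endpoint is the limit of the truncated endpoints $p_+^{(N)}(t)$, where $p_+^{(N)}(t)$ is the forward endpoint of the broken geodesic obtained by shearing across only the first $N$ leaves $\ell_1,\dots,\ell_N$ and continuing geodesically thereafter. Differentiating $p_+^{(N)}(t)$ in $t$ produces a sum of contributions, one per leaf $\ell_1,\dots,\ell_N$, and the crucial point is that, because the earthquake is a \emph{left} earthquake and the leaves crossed by the simple path $\widetilde\delta$ are linearly ordered with coherently nested bounding half-planes, every one of these contributions has the same (leftward) sign. Thus each $p_+^{(N)}(t)$ moves strictly left, and the partial motions accumulate rather than cancel.

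Finally I would pass to the limit $N\to\infty$. The lifts of $\lambda$ form a discrete family of geodesics with a uniform separation between consecutive leaves crossed by $\widetilde\delta$, which forces exponential contraction of the nested intervals cut out on $\partial\mathbb H$ by the maps $\Phi_N(t)$; hence $p_+^{(N)}(t)\to p_+(t)$ uniformly on compact $t$-intervals and the termwise-positive derivative series converges to a strictly positive (leftward) limit. Interchanging the limit with the derivative then yields that $p_+(t)$—and symmetrically $p_-(t)$—moves strictly to the left as $t$ increases. I expect the main obstacle to be exactly this last step: justifying that the infinitely many leftward shear contributions neither cancel nor merely converge to a degenerate (zero) motion, which is where the coherent handedness of a left earthquake and the uniform discreteness of $\widetilde\lambda$ must be combined to guarantee strictness in the limit.
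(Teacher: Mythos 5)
The paper does not actually prove this lemma: it is quoted, in the paper's notation, from Lemma 3.6 of \cite{kerckhoff1983nielsen}, so your attempt has to be measured against Kerckhoff's argument. Your skeleton --- universal cover, the shear cocycle $\Phi_n(t)$ along the successive leaves of $\widetilde\lambda$ crossed by $\widetilde\delta$, the single-leaf M\"obius computation, and the coherence of the leftward direction coming from the nested half-planes --- is exactly the mechanism behind Kerckhoff's lemma, so your approach is essentially the standard one. Where you genuinely diverge is the finish: Kerckhoff's proof is order-theoretic, not differential. For $t'>t$, the endpoint $p_+(t')$ is obtained from $p_+(t)$ by an infinite composition of left translations of magnitude $t'-t$ along the repositioned leaves; each factor moves every point of the closed boundary interval beyond its leaf weakly to the left (strictly unless the point is fixed), the intervals are nested, so the limit moves weakly left, and strictness is already supplied by the first crossing alone, since no later factor can push the point back to the right. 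This bypasses the two steps you yourself flag as the obstacles --- termwise differentiation of the cocycle and the limit--derivative interchange --- which in your version do require the domination estimate you gesture at (the $k$-th contribution controlled by the length of the $k$-th nested interval, decaying geometrically). Two caveats if you keep your route: the uniform separation of consecutive leaves holds here because $\lambda$ and $\delta$ are simple \emph{closed} geodesics, so the crossing pattern is periodic with gaps bounded below --- it fails for general laminations, and you should state it as a consequence of closedness; and the ``same leftward sign'' of all contributions does not follow merely from M\"obius maps preserving orientation --- it is precisely the coherent nesting of the half-planes crossed by $\widetilde\delta$ from the same side that transports ``left along $\ell_k$'' to a consistent direction at $p_+$, and since that is the heart of the lemma it deserves an explicit argument rather than an aside.
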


\begin{figure}[h]
    \includegraphics[width=7cm]{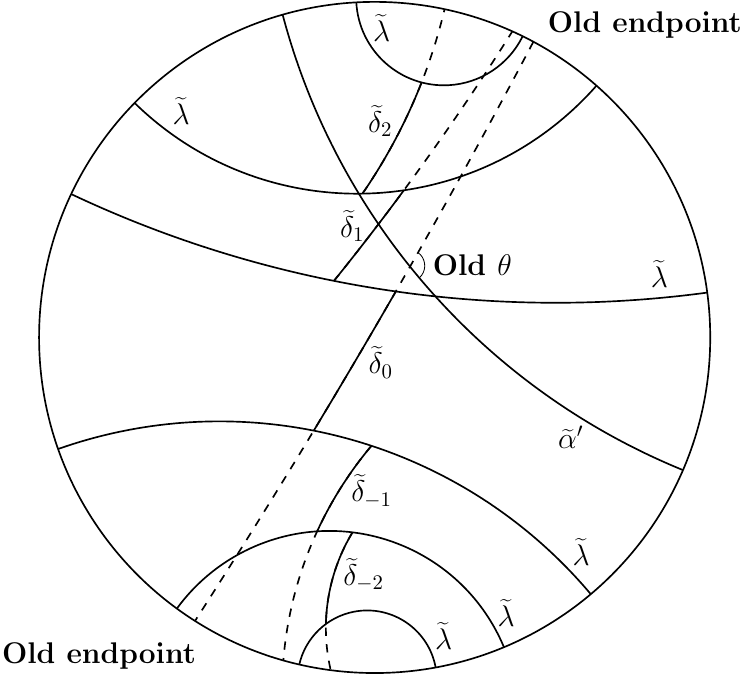}
    \caption{New endpoints are to the left to the old ones, meaning that $\overline\delta(t)$ rotates counterclockwise}
    \label{fig:Intersection along earthquake}
\end{figure}

\begin{proof}[Proof of Lemma \ref{essential}]
    Let $Z$ be a connected component of $Y_2\setminus Y_1$ that contains a $[1,1]$ or $[0,4]$-subsurface. Note that for any geodesic or orthogeodesic on $Z$, there exists a simple closed geodesic that intersects it. Since $Y_2\supsetneqq Y_1$, pick $\alpha\in S_2$ crossing $Z$, and let $\delta$ be a simple closed geodesic on $Z$ that intersects $\alpha$. Pick a simple closed geodesic $\lambda$ on $Z$ that intersects $\delta$. The conditions in Lemma \ref{threecurves} are satisfied.
    
    Let $\theta_i(t)\in(0,\pi)$ be the $i$-th intersection angle of $\alpha'(t)$ and $\delta(t)$, measured from $\alpha'(t)$ to $\delta(t)$, as in Lemma \ref{threecurves}, then the $\theta_i$'s have the same monotonicity in $t$ along the earthquake path $\mathcal E_\lambda(t)$ through $X$. Therefore $\sum\cos\theta_i(t)$ is monotone. Note that there are only finitely many zeros of $\sum\cos\theta_i(t)$, so there exists an integer $t=n$ such that $$\langle\nabla l_\alpha,\tau_{\delta(n)}\rangle=\sum\cos\theta_i\neq0$$ at $X$. On the other hand, $\langle\nabla l_\gamma,\tau_{\delta(n)}\rangle=0$ for any $\gamma\in S_1$. The lemma follows.
\end{proof}

The proof also implies the following statement.
\begin{lemma}
\label{extra rank}
    Let $Y$ be a subsurface of $X$, and $\gamma$ a simple closed geodesic on $X$. Suppose $\gamma\not\subset\overline Y$, then $\nabla l_\gamma\not\in T_X^Y\mathcal T$, the tangent subspace given by $Y$ of the Teichm\"uller space of $X$ at $X$.
\end{lemma}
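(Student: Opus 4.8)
The plan is to mimic the proof of Lemma \ref{essential}, producing a single twist direction that detects the component of $\nabla l_\gamma$ transverse to $T_X^Y\mathcal T$. Concretely, I will exhibit a simple closed geodesic $\delta'$ disjoint from $Y$ whose twist vector $\tau_{\delta'}$ is orthogonal to $T_X^Y\mathcal T$ yet pairs nontrivially with $\nabla l_\gamma$.

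First I would locate the right complementary region. Since $\gamma\not\subset\overline Y$, the geodesic $\gamma$ meets $X\setminus\overline Y$, so some arc of $\gamma$ (or all of it) lies in a connected component $Z$ of the complement of $\overline Y$. By maximality of the essential closure, $Z$ is not a $[0,3]$-surface, hence by the Remark following the definition of essentiality it contains a $[1,1]$ or $[0,4]$-subsurface. In particular there are simple closed geodesics supported in $Z$ that intersect one another, and, as noted in the proof of Lemma \ref{essential}, every geodesic or orthogeodesic in $Z$ is crossed by some simple closed geodesic of $Z$.

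Next I would set up the three curves and run the earthquake argument. Choose a simple closed geodesic $\delta\subset Z$ meeting the arc $\gamma\cap Z$, so that $\delta$ and $\gamma$ intersect, and a simple closed geodesic $\lambda\subset Z$ meeting $\delta$. Then Lemma \ref{threecurves} applies with $\alpha=\gamma$: the intersection angles $\theta_i(t)$ of $\gamma$ and $\delta$ along the earthquake path $\mathcal E_\lambda$ are monotone, so $\sum_i\cos\theta_i(t)$ has only finitely many zeros. Hence for some integer $t=n$ the twisted curve $\delta'=\delta(n)$ satisfies $\langle\nabla l_\gamma,\tau_{\delta'}\rangle=\sum_i\cos\theta_i\neq0$ at $X$, exactly as in Lemma \ref{essential}. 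Because $\delta$ and $\lambda$ both lie in $Z$, the Dehn-twisted curve $\delta'$ again lies in $Z$, and so $\delta'$ is disjoint from $Y$. For every simple closed geodesic $\eta\subset Y$ the curves $\delta'$ and $\eta$ are then disjoint, so Kerckhoff's formula gives $\langle\nabla l_\eta,\tau_{\delta'}\rangle=0$; since these length gradients span $T_X^Y\mathcal T$, we obtain $\tau_{\delta'}\perp T_X^Y\mathcal T$. As $\langle\nabla l_\gamma,\tau_{\delta'}\rangle\neq0$, the vector $\nabla l_\gamma$ has a nonzero component orthogonal to $T_X^Y\mathcal T$, whence $\nabla l_\gamma\notin T_X^Y\mathcal T$.

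The main obstacle is the orthogonality step: asserting $\tau_{\delta'}\perp T_X^Y\mathcal T$ requires knowing that $T_X^Y\mathcal T$ is exactly the span of $\{\nabla l_\eta:\eta\text{ a simple closed geodesic in }Y\}$, equivalently that deformations supported on the disjoint region $Z$ are Weil--Petersson orthogonal to those supported on $Y$. If one instead takes $T_X^Y\mathcal T$ to be the full deformation space of $Y$, one must separately check that the length gradients of curves in $Y$ span it, or invoke the locality of the Weil--Petersson pairing for disjoint subsurfaces; this is the one point where slightly more than a verbatim transcription of the proof of Lemma \ref{essential} is needed. I also note that the hypothesis is stated with $\overline Y$ rather than $Y$ precisely so that $Z$ is forced to be non-$[0,3]$, guaranteeing the intersecting curves $\delta,\lambda$ while leaving the tangent subspace unchanged, since attaching pairs of pants does not enlarge $T_X^Y\mathcal T$.
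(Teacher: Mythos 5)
Your proof is correct and follows the paper's own route: the paper establishes Lemma \ref{extra rank} simply by remarking that the proof of Lemma \ref{essential} carries over, and your argument---locating a non-pants component $Z$ of the complement of $\overline Y$ via maximality of the essential closure, producing $\delta,\lambda\subset Z$, applying Lemma \ref{threecurves} with $\alpha=\gamma$ to obtain a twisted curve $\delta'\subset Z$ whose twist vector pairs nontrivially with $\nabla l_\gamma$ yet annihilates the length gradients of all curves in $Y$---is precisely that adaptation written out, including an honest flag of the one genuine ambiguity (what exactly $T_X^Y\mathcal T$ denotes), which the paper itself leaves unresolved and which is harmless under the span-of-gradients reading that matches how the lemma is used in Theorem \ref{induction} and Lemma \ref{rank=2}. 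The only blemish is your closing parenthetical that attaching pairs of pants ``does not enlarge $T_X^Y\mathcal T$'' (attaching pants generally does enlarge the deformation space of the subsurface); but nothing in your argument relies on this remark, and your identification of the real role of $\overline Y$ in the hypothesis---forcing $Z$ to contain intersecting simple closed geodesics---is the right one.
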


\begin{theorem}
\label{induction}
Given $j\ge0$, for any $k\ge0$, there exists $r_k>0$ such that, with finitely many exceptions on $(g,n)$, for any $j$-system $S$ with $\#S\ge r_k$, on any surface $X$ in any $\teich$ space $\tgn$, there is
$$\rank\{\nabla\gamma\}_{\gamma\in S}\ge k.$$
\end{theorem}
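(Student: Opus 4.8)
The plan is to reverse the inequality: rather than bounding the rank from below directly, I will bound the cardinality $\#S$ from above by a function of $\rho:=\rank\{\nabla l_\gamma\}_{\gamma\in S}$ depending only on $j$ and $\rho$, never on $(g,n)$. Once a bound $\#S\le F(\rho)$ is in hand, setting $r_k:=F(k-1)+1$ forces $\rho\ge k$ whenever $\#S\ge r_k$, since otherwise $\#S\le F(\rho)\le F(k-1)<r_k$. The engine is Lemma \ref{extra rank}: a geodesic lying outside the essential closure of a subsurface contributes a length gradient that escapes the corresponding tangent subspace, hence an honest extra dimension of rank.

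First I would extract an economical subsystem. Choose $S^*\subseteq S$ minimal with respect to the property $\overline{\ssh(S^*)}=\overline{\ssh(S)}$. I claim the gradients $\{\nabla l_\gamma\}_{\gamma\in S^*}$ are linearly independent, so that $\#S^*=\rank\{\nabla l_\gamma\}_{\gamma\in S^*}\le\rho$. Fix $\gamma\in S^*$ and put $A=\ssh(S^*\setminus\{\gamma\})$. Minimality should give $\gamma\not\subset\overline A$: otherwise $\ssh(S^*)$ is trapped between $A$ and $\overline A$, the essential closure cannot separate the two, and $\overline{\ssh(S^*\setminus\{\gamma\})}=\overline{\ssh(S^*)}=\overline{\ssh(S)}$, contradicting minimality. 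With $\gamma\not\subset\overline A$, Lemma \ref{extra rank} places $\nabla l_\gamma$ outside $T_X^A\mathcal T\supseteq\spn\{\nabla l_\delta\}_{\delta\in S^*\setminus\{\gamma\}}$; as this holds for every $\gamma\in S^*$, the set is independent.

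Next I convert size into complexity. Since $S^*$ is a $j$-system of at most $\rho$ curves, Lemma \ref{maxhull} gives $\#^p\ssh(S^*)\le j\binom{\rho}{2}$, and Lemma \ref{essential closure upper bound} then yields $\#^p\overline{\ssh(S^*)}<j\rho(\rho-1)+2$. Because $\overline{\ssh(S^*)}=\overline{\ssh(S)}$ contains every curve of $S$, the whole system is a $j$-system on a surface of this bounded complexity, so Theorem \ref{maxcap}, together with the estimate $M\le 3g+n$ of Theorem \ref{maxm} applied to $\overline{\ssh(S)}$, gives $\#S\le\cp^j(\overline{\ssh(S)})\le F(\rho)$ with $F$ an explicit function of $j$ and $\rho$ alone. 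This is the promised bound, and $r_k:=F(k-1)+1$ finishes the argument. Note $r_k$ is independent of $(g,n)$ while $\rho\le\dim\tgn$; the only pairs on which the hypothesis $\#S\ge r_k$ is attainable already have $\dim\tgn$ large, so the conclusion holds on every $(g,n)$, the finitely many small pairs carrying no $j$-system of size $r_k$ being the (vacuous) exceptions.

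The main obstacle is the topological stability of the essential closure invoked in the extraction step, namely that $A\subseteq B\subseteq\overline A$ forces $\overline B=\overline A$. This is precisely what lets minimality of $S^*$ translate into the hypothesis $\gamma\not\subset\overline A$ of Lemma \ref{extra rank}, and it is the mechanism by which a non-essential enlargement (as in Lemma \ref{essential}) is detected one curve at a time. I expect it to follow from the construction of $\overline{(\cdot)}$ by attaching complementary pairs of pants — a surface wedged between $A$ and its pants-saturation has the same saturation — but it needs care, since the intermediate hull may cut through those complementary pants, and it is here that the hull/filling structure of $\ssh$, as opposed to that of an arbitrary subsurface, must be used.
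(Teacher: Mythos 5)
Your proposal is correct, but it takes a genuinely different route from the paper. The paper argues by induction on $k$: having fixed $r_k$, it chooses $r_{k+1}$ to exceed $\cp^j(\overline{\ssh}(S_k))$ for every $j$-system $S_k$ of $r_k$ curves, so that any $S_{k+1}$ of size $r_{k+1}$ contains a curve escaping $\overline{\ssh}(S_k)$, and Lemma \ref{extra rank} then adds one dimension to the rank supplied by the inductive hypothesis; the finitely many exceptional $(g,n)$ are carried along through the induction. You instead prove a single, non-inductive inequality $\#S\le F(j,\rho)$ with $\rho=\rank\{\nabla l_\gamma\}_{\gamma\in S}$, by extracting a minimal subsystem $S^*$ with $\overline{\ssh(S^*)}=\overline{\ssh(S)}$, showing its gradients are linearly independent so that $\#S^*\le\rho$, and then bounding $\#S\le\cp^j(\overline{\ssh(S)})$ via the complexity control of Lemmas \ref{maxhull} and \ref{essential closure upper bound}. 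Both proofs run on the same engine (Lemma \ref{extra rank} plus the capacity bound of Theorem \ref{maxcap}), but yours buys two real advantages: the exceptional pairs become vacuous (small surfaces simply carry no $j$-system of size $r_k$), and $r_{k+1}$ is a capacity at pants-complexity $O(jk^2)$ rather than at complexity $O(jr_k^2)$, so your $r_k$ grows roughly singly exponentially in $k$ instead of doubly exponentially — an improvement that would propagate to the quantitative estimate of Section \ref{logloglowerbound}. Two small repairs: Theorem \ref{maxcap} is stated for connected surfaces and $\overline{\ssh(S)}$ need not be connected, so sum capacities over its (at most $\#S^*\le\rho$) components; and make $F$ nondecreasing in $\rho$ by taking a running maximum before setting $r_k=F(j,k-1)+1$.

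The stability property you flagged as the main obstacle — $A\subseteq B\subseteq\overline A$ forces $\overline B=\overline A$ — is true, and the failure mode you worried about (the intermediate hull cutting through the complementary pants) cannot occur. Write $\overline A=A\cup\bigcup_i P_i$, where the $P_i$ are the pair-of-pants components of $X\setminus A$. Since $A\subseteq B$, the geodesic boundary $\partial B$ avoids $\intr(A)$, hence lies in $\partial A\cup\bigcup_i P_i$; but the only closed geodesics contained in a hyperbolic pair of pants are its three boundary geodesics, so $\partial B\subseteq\partial A$, and therefore $B=A\cup\bigcup_{i\in I}P_i$ for some subcollection $I$ of the pants. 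The components of $X\setminus B$ are then the $P_i$ with $i\notin I$ together with the components of $X\setminus\overline A$, and none of the latter is a pair of pants, since they are exactly the non-pants components of $X\setminus A$. Hence $\overline B=B\cup\bigcup_{i\notin I}P_i=\overline A$. Note that this argument uses only that $B$ is a subsurface with geodesic boundary, not that it is a hull, which is exactly the generality your extraction step needs; with this lemma in place, your proof is complete.
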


\begin{proof}
    We prove the theorem by induction.
    
    The case of $k=0$ is trivial. The case of $k=1$ is also trivial with $r_1=1$.
    
    For any $j$-system $S_k$ of $r_k$ curves on $X$, when $-\chi(X)$ is large, we have the inductive assumption $$\text{rank}\{\nabla l_\gamma\}_{\gamma\in S_k}\ge k.$$
    
    Note that $\#^p\ssh(S_k)$ is bounded from above in $r_k$ by Lemma \ref{maxhull}. By Lemma \ref{essential closure upper bound}, there exists $\chi_k=\chi_k(r_k)$ such that $\ssh(S_k)$ is not essential in any surface $X$ when $-\chi(X)>-\chi_k$. By Theorem \ref{maxcap}, $\cp^j(\overline{\ssh}(S_k))$ is bounded in $r_k$ (and $j$) and uniformly in $S_k$.
    
    We pick $$r_{k+1}>\max_{S_k}\cp^j(\overline{\ssh}(S_k)).$$
    
    For any $j$-system $S_{k+1}$ of at least $r_{k+1}$ curves and any subset $S_k\subset S_{k+1}$ of $r_k$ curves, by definition of $j$-capacity, we have
    $$\ssh(S_{k+1})\supsetneqq\overline{\ssh}(S_k),$$
    and by Lemma \ref{essential} and Lemma \ref{extra rank},
    $$\text{rank}\{\nabla l_\gamma\}_{\gamma\in S_{k+1}}>\text{rank}\{\nabla l_\gamma\}_{\gamma\in S_k}\ge k.$$
    Therefore, $$\text{rank}\{\nabla l_\gamma\}_{\gamma\in S_{k+1}}\ge k+1.$$
    Induction completes.
\end{proof}

\section{Critical points of the Morse functions}
\label{systole section}
Recall that given a hyperbolic surface $X$, $S(X)$ denotes the set of shortest closed geodesics on it. Later we may extend the definition of $S(Y)$ to a surface $Y$ with boundary, or a subsurface $Y$, by including boundary curves when looking for the shortest.

As a set of curves, $S(X)$ satisfies certain properties for combinatorial and geometric reasons. We say that a curve on a surface bounds two cusps, if they together form a pair of pants.
\begin{lemma}
\label{intersection=2}
    Suppose $\gamma_1,\gamma_2\in S(X)$, then $i(\gamma_1,\gamma_2)\le 2$, namely, $S(X)$ is a 2-system. If $i(\gamma_1,\gamma_2)=2$, then at least one of them bounds two cusps.
\end{lemma}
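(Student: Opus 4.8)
The plan is to prove both claims by a length-comparison argument, exploiting the fact that both $\gamma_1$ and $\gamma_2$ realize the systole of $X$. First I would set up the key inequality: if $\gamma_1,\gamma_2\in S(X)$ then $l_{\gamma_1}(X)=l_{\gamma_2}(X)=\sys(X)$, and any essential closed curve obtained by surgery from $\gamma_1\cup\gamma_2$ must have length at least $\sys(X)$ as well. The strategy is that when two shortest geodesics intersect many times, one can perform a cut-and-paste (surgery) at the intersection points to produce shorter essential curves, contradicting minimality — unless the local configuration is very special.

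\emph{Bounding the intersection number.} To show $i(\gamma_1,\gamma_2)\le 2$, I would argue by contradiction assuming $i(\gamma_1,\gamma_2)\ge 3$. Consider the intersection points along $\gamma_1$; among the arcs of $\gamma_1$ cut by $\gamma_2$ (and vice versa), the total length of $\gamma_1\cup\gamma_2$ is $2\sys(X)$, so some arcs are short. The plan is to use a pigeonhole/averaging argument: with at least three intersection points one can choose intersection points $p,q$ and arcs $a\subset\gamma_1$, $b\subset\gamma_2$ joining them such that the concatenation, after smoothing, is freely homotopic to an \emph{essential} simple closed curve whose geodesic representative is strictly shorter than $\sys(X)$. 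The standard tool here is that at a transverse intersection one can always resolve (switch the two strands) so as to shorten total length, and a careful bookkeeping of the two possible resolutions at each crossing shows that with three or more crossings at least one resolution yields essential curves of total length $<2\sys(X)$, hence one of them is shorter than the systole. This gives the contradiction, so $i(\gamma_1,\gamma_2)\le 2$, i.e. $S(X)$ is a $2$-system.

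\emph{The equality case.} For the second claim, suppose $i(\gamma_1,\gamma_2)=2$. Now the two resolutions at the two crossings must each produce curves of length $\ge\sys(X)$, and since total length is exactly $2\sys(X)$, the resolved curves must have length \emph{exactly} $\sys(X)$ or be inessential. I would analyze the two topological types of resolution: one resolution produces two simple closed curves $\delta_1,\delta_2$, the other produces a single curve, and track which of these can be peripheral (homotopic to a cusp) versus essential. The rigidity forces that the only way to avoid producing a strictly shorter essential geodesic is that one of the resolved pieces bounds a once-punctured piece on each side — concretely that $\gamma_1$ or $\gamma_2$ together with the arcs cuts out a pair of pants with two cusps, i.e. one of them bounds two cusps. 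Here I would use the classification of how two curves meeting in exactly two points sit on a surface (the regular neighborhood of $\gamma_1\cup\gamma_2$ is one of a short list of surfaces with $-\chi$ small), and rule out all but the two-cusp configuration by the length inequality.

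The main obstacle I anticipate is the equality-case bookkeeping: showing that strict shortening fails \emph{only} in the two-cusp configuration requires carefully checking that every resolved curve in every other local picture is either essential or can be pushed to a genuinely shorter geodesic, which depends on controlling the geodesic length of a freely-homotoped piecewise-geodesic curve (the smoothing at corners strictly decreases length, so one needs the strict inequality to survive). Getting the inessential-versus-essential dichotomy exactly right, and matching it to the ``bounds two cusps'' conclusion, is the delicate step; the intersection bound $i\le 2$ itself should follow more routinely from the shortening argument.
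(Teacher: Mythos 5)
Your overall strategy is the right one: the paper itself does not prove this lemma (it cites \cite{fanoni2016systoles}), and the proof there is exactly the surgery-plus-length-comparison scheme you outline. But there is a genuine gap at the decisive step, namely your claim that when $i(\gamma_1,\gamma_2)\ge 3$ a pigeonhole argument produces a resolution that is both short and \emph{essential}. The pigeonhole half is fine: with $k\ge 3$ crossings, the shortest arc $a$ of $\gamma_1$ cut off by $\gamma_2$ has length at most $\sys(X)/3$, and the shorter of the two arcs of $\gamma_2$ joining its endpoints has length at most $\sys(X)/2$, so the concatenation is a closed curve of length at most $\tfrac{5}{6}\sys(X)$. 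What no bookkeeping can deliver is essentiality: on a cusped surface the concatenation may be nullhomotopic or, crucially, homotopic into a cusp, and the cusp-parallel possibility is not a degenerate case to be counted away --- it is exactly what occurs for the systole pairs that genuinely intersect twice. Asserting that some short resolution is essential is, in effect, asserting the lemma; and contrary to your closing remark that the bound $i\le 2$ ``should follow more routinely,'' this difficulty afflicts the intersection bound itself just as much as the equality case.

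The missing ingredients are: (i) the bigon criterion --- two distinct simple closed geodesics are in minimal position, so no arc of $\gamma_1$ and arc of $\gamma_2$ bound an unpunctured bigon; this rules out nullhomotopic resolutions and is never invoked in your proposal; (ii) the strict shortening of a corner curve, which you do mention, showing that a concatenation of length $\le\sys(X)$ cannot be essential; hence (iii) the dichotomy that \emph{every} concatenation of total length $\le\sys(X)$ bounds a once-punctured bigon. The actual proof then runs entirely through the topology of these punctured bigons: for $k\ge 3$ one produces more than one of them (the two shortest arcs of $\gamma_1$ each have length $\le\sys(X)/2$, so each yields one), and an innermost/embeddedness analysis of how such bigons can coexist gives the contradiction; for $k=2$, the concatenation $a_1\cup b_1$ of the two short arcs has length $\le\sys(X)$, and of the two mixed concatenations $a_1\cup b_2$ and $a_2\cup b_1$ at least one has length $\le\sys(X)$, so one obtains two punctured bigons sharing a common arc --- say $a_1$ --- which glue along that arc into a twice-punctured disk bounded by $b_1\cup b_2=\gamma_2$. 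That is precisely the conclusion that $\gamma_2$ bounds two cusps, and it also explains the asymmetric form of the statement (``at least one of them''). Your equality-case sketch gestures at this picture, but without steps (i)--(iii) in place first, neither half of the lemma is actually established.
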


This is a well-known fact about systole. For proof, one may see \cite{fanoni2016systoles}.
\begin{remark}
    $S(X)$ is a 1-system when $n(X)=0,1$.
\end{remark}

\begin{corollary}
\label{separatingsystole}
    Suppose $S(X)$ is filling, then $\gamma\in S(X)$ is separating if and only if it bounds two cusps.
\end{corollary}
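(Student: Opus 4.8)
The plan is to prove the two implications separately, treating the reverse direction as essentially topological and reserving the real work for the forward one. For the ``if'' direction, suppose $\gamma$ bounds two cusps. Then $\gamma$ is the unique geodesic boundary of an embedded pair of pants $P\subset X$ whose other two boundary components are the cusps, so $X\setminus\gamma$ is the disjoint union of the interior of $P$ and of $X\setminus P$. Since $X$ carries a systole it is not itself a pair of pants, so $X\setminus P$ is nonempty and $\gamma$ is separating.

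For the ``only if'' direction, suppose $\gamma\in S(X)$ is separating and write $X=X_1\cup_\gamma X_2$. First I would locate a systole crossing $\gamma$: since $S(X)$ is filling, $\bigcup S(X)$ is connected by Lemma \ref{fillingconnected}; were no curve of $S(X)$ to cross $\gamma$, then $\bigcup S(X)$ would decompose as the disjoint union of its parts in $X_1$, in $X_2$, and of $\gamma$ itself, so connectedness would force $S(X)=\{\gamma\}$, which cannot fill $X$. Hence some $\delta\in S(X)$ crosses $\gamma$. Because $\gamma$ is separating it is null-homologous, so $i(\gamma,\delta)$ is even; combined with $i(\gamma,\delta)\le2$ from the $2$-system property of Lemma \ref{intersection=2}, this yields $i(\gamma,\delta)=2$. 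Applying the second assertion of Lemma \ref{intersection=2} then tells us that $\gamma$ or $\delta$ bounds two cusps.

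The main obstacle is to upgrade this dichotomy to the statement that $\gamma$ itself bounds two cusps, that is, to rule out or absorb the case where only the crossing curve $\delta$ does. Here I would argue as follows. If $\delta$ bounds two cusps it cuts off a pair of pants $P_\delta$ containing both cusps $c_1,c_2$, and the single essential arc $a=\gamma\cap P_\delta$ must separate $c_1$ from $c_2$, since a $\delta$-to-$\delta$ arc cutting off a cusp-free region would be inessential and contradict $i(\gamma,\delta)=2$; thus one cusp lies in $X_1$ and the other in $X_2$. I would then bring in the global filling hypothesis on each side: the curves and arcs of $S(X)$ lying in $X_1$ must fill $X_1$, and because the standing assumption is that $\gamma$ does \emph{not} bound two cusps, every systole crossing $\gamma$ is again forced to bound two cusps. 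Since a geodesic bounding a fixed pair of cusps is unique, distinct crossing systoles would consume distinct cusp-pairs, and tracking the complementary regions of $\bigcup S(X)$ adjacent to $\gamma$ should force the surface to exhaust more cusps than it has, giving the desired contradiction. I expect this bookkeeping, rather than any single estimate, to be the delicate point; a cleaner alternative worth attempting is to analyze the four-holed sphere $\ssh(\{\gamma,\delta\})$ directly and show that, once the filling of each side of $\gamma$ is imposed, the partition of the boundary components realized by $\gamma$ must place two cusps on the same side.
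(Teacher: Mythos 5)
Your ``if'' direction is fine, and the first half of your ``only if'' direction is correct and is exactly what the paper leaves implicit when it calls this a corollary: connectedness of $\bigcup S(X)$ (Lemma \ref{fillingconnected}) produces a systole $\delta$ crossing $\gamma$, the parity of intersection with a separating curve together with Lemma \ref{intersection=2} forces $i(\gamma,\delta)=2$, and the lemma then gives that $\gamma$ \emph{or} $\delta$ bounds two cusps. The genuine gap is exactly where you flag it: the case in which $\delta$, rather than $\gamma$, bounds the two cusps is never closed. What you offer for that case is a plan (``should force the surface to exhaust more cusps than it has'', ``worth attempting''), not an argument, and this case is the entire content of the corollary beyond the cited lemma.

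Moreover, the bookkeeping you propose cannot succeed as described, because nothing forces there to be more crossing systoles than available cusp pairs. Concretely, take $X$ a six-punctured sphere with cusps $p_1,p_2,p_3,q_1,q_2,q_3$, let $\gamma$ separate the $p_i$ from the $q_i$ (so $\gamma$ bounds no pair of cusps), and let $\delta_1,\delta_2,\delta_3$ be pairwise disjoint simple closed curves with $\delta_i$ bounding the cusps $p_i$ and $q_i$ and crossing $\gamma$ twice, with the three pairs of crossings unlinked along $\gamma$. This configuration satisfies every constraint your argument tracks: it is a $2$-system; each curve crossing $\gamma$ bounds two cusps, one on each side of $\gamma$; distinct crossing curves bound distinct cusp pairs (consistent with Lemma \ref{sametwocusps}); and the four curves fill $X$, the complementary regions being six once-punctured bigons and two hexagons. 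So no count of cusps, cusp pairs, or complementary regions adjacent to $\gamma$ can produce a contradiction. Ruling out such a configuration --- that is, showing these four curves cannot simultaneously be shortest geodesics --- requires metric input, e.g.\ surgery and length inequalities in the spirit of the proof of Lemma \ref{intersection=2}, which your sketch does not supply (and which the paper, presenting the statement without proof, does not supply either). Your alternative suggestion of analyzing the four-holed sphere filled by $\gamma$ and $\delta$ hits the same wall: the surgery estimates there only show that one curve from each ``diagonal'' pair of surgered curves bounds a cusp, and that is consistent with the two cusps lying on opposite sides of $\gamma$, which is precisely the configuration above.
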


\begin{lemma}
\label{sametwocusps}
    If two distinct geodesics $\gamma_1$ and $\gamma_2$ bound the same two cusps on a surface $X$, then $i(\gamma_1,\gamma_2)\ge4$.
\end{lemma}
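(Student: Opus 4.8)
The plan is to bound $i(\gamma_1,\gamma_2)$ from below by first fixing the two cusps $p,q$ together with the two pairs of pants $P_1,P_2$ that $\gamma_1,\gamma_2$ respectively cut off, so that $\partial P_i=\gamma_i$ and $P_i$ contains exactly the cusps $p$ and $q$. Throughout I would work with $\gamma_1,\gamma_2$ in minimal position, which for distinct simple closed geodesics is automatic (there are no bigons). Since $\gamma_1=\partial P_1$ is separating, every closed curve, in particular $\gamma_2$, crosses it an even number of times, so $i(\gamma_1,\gamma_2)$ is even. It then suffices to exclude the values $0$ and $2$.

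To rule out $i(\gamma_1,\gamma_2)=0$: if the two geodesics were disjoint, then $\gamma_2$ lies either inside $P_1$ or in its complement. In the first case $\gamma_2$ is a simple closed curve in the pair of pants $P_1$ enclosing both cusps, hence freely homotopic to the remaining boundary $\gamma_1$; being geodesics, this forces $\gamma_1=\gamma_2$. In the second case, since $p,q\in P_1$ and $\gamma_2$ misses $P_1$, the pants $P_2$ must contain $P_1$, so $\gamma_1$ sits inside $P_2$ and the same argument again gives $\gamma_1=\gamma_2$. Either conclusion contradicts $\gamma_1\neq\gamma_2$.

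The main step, and the part I expect to be the real obstacle, is excluding $i(\gamma_1,\gamma_2)=2$. Writing $\gamma_1\cap\gamma_2=\{x,y\}$, the curve $\gamma_2$ meets $P_1$ in a single simple arc $b$ with $\partial b=\{x,y\}\subset\gamma_1$, and $b$ is essential because minimal position forbids a bigon. An essential arc in a pair of pants with both endpoints on one boundary component must cut off exactly one of the other two boundary components, so $b$ together with a subarc of $\gamma_1$ bounds a once-punctured disk $D$ containing one cusp, say $p$, while the complementary region $R\subset P_1$ contains $q$. The key observation is that $b\subset\gamma_2=\partial P_2$ locally separates the $P_2$-side from its complement; since $D$ meets $\gamma_2$ only along $b$ and contains the cusp $p\in P_2$, the entire region $D$ lies on the $P_2$-side, i.e.\ $D\subset P_2$. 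But then $R$, lying on the opposite side of $b$ and also meeting $\gamma_2$ only along $b$, is forced into the complement of $P_2$. This puts the cusp $q\in R$ outside $P_2$, contradicting $q\in P_2$. The symmetric case, in which $b$ cuts off $q$ instead, is identical after swapping $p$ and $q$, so $i(\gamma_1,\gamma_2)=2$ is impossible.

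Combining the three steps, $i(\gamma_1,\gamma_2)$ is even and different from $0$ and $2$, hence at least $4$. The delicate points to get right are the claim that the arc $b$ is essential and cuts off a single cusp, which is precisely where minimal position of the geodesics is used, and the careful bookkeeping of which side of $b$ the regions $D$ and $R$ fall on; everything else is routine.
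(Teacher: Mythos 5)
Your proof is correct, but it follows a genuinely different route from the paper's. You argue by parity plus case exclusion: since $\gamma_1$ bounds a pair of pants it is separating, so $i(\gamma_1,\gamma_2)$ is even; disjointness is impossible because it would force $\gamma_1$ and $\gamma_2$ to be freely homotopic (hence equal as geodesics); and $i=2$ is impossible by analyzing the single arc $b=\gamma_2\cap P_1$, which must separate the two cusps inside $P_1$, and then tracking which side of $\partial P_2$ each complementary piece lies on. The paper instead gives a direct count with no case analysis: it takes the two embedded pairs of pants $X_1,X_2$ bounded by $\gamma_1,\gamma_2$, observes that the two cusps must lie in \emph{different} path components of $X_1\cap X_2$ (otherwise a path joining them would exhibit both pants as regular neighborhoods of the same arc, forcing $\gamma_1=\gamma_2$), and notes that the boundary of each of these two components must use arcs of both curves, hence carries at least two intersection points, giving $\ge 4$ at once. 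The trade-off: the paper's argument is shorter and yields the bound in a single stroke, while yours is more hands-on and rests only on very standard facts (mod-2 intersection with separating curves, absence of bigons between geodesics, and the classification of essential arcs in a pair of pants), so each step is easy to verify; the delicate side-tracking along $b$ in your $i=2$ case is precisely the work the paper's component-counting avoids. Both arguments are sound.
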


\begin{proof}
    For $i=1,2$, since $\gamma_i$ bounds two cusps, say $p$ and $q$, either separates the surface into two parts. Consider $p$ and $q$ as two marked points on the surface for topological constructions. Let $X_i$ denote the closed $[0,3]$-subsurface bounded by $\gamma_i$ that contains the cusps $p$ and $q$, then $p,q\in X_1\cap X_2$. Note that $X_1\cap X_2$ is not path connected, as $p$ and $q$ cannot be joined by a path in the interior; otherwise, both $X_1$ and $X_2$ contract to that path. Since the boundary of the path component containing $p$ or $q$ is contributed by both $\gamma_1$ and $\gamma_2$, it contains at least two intersections of $\gamma_1$ and $\gamma_2$. The lemma follows.
\end{proof}

\begin{corollary}
\label{same two cusps}
    For distinct $\gamma_1,\gamma_2\in S(X)$, they cannot bound the same two cusps.
\end{corollary}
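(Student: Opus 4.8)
The plan is to combine the two preceding lemmas directly, since the statement is an immediate numerical incompatibility between the intersection bound for shortest geodesics and the intersection lower bound for curves bounding a common pair of cusps.

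First I would argue by contradiction: suppose $\gamma_1,\gamma_2\in S(X)$ are distinct and that they bound the same two cusps. Since both curves lie in $S(X)$, the set of shortest closed geodesics, Lemma \ref{intersection=2} applies and gives
$$i(\gamma_1,\gamma_2)\le 2.$$
On the other hand, the assumption that $\gamma_1$ and $\gamma_2$ bound the same two cusps is exactly the hypothesis of Lemma \ref{sametwocusps}, which yields
$$i(\gamma_1,\gamma_2)\ge 4.$$
These two bounds are incompatible, so the supposition fails and the corollary follows.

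Since both ingredients are already in hand, there is no genuine obstacle here; the only point requiring care is making sure the hypotheses of each lemma are actually met, namely that $\gamma_1,\gamma_2$ being distinct elements of $S(X)$ legitimately invokes the $2$-system bound of Lemma \ref{intersection=2}, and that ``bounding the same two cusps'' is precisely the configuration covered by Lemma \ref{sametwocusps}. Both are transparent, so I expect the proof to be a single short paragraph.
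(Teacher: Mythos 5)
Your proof is correct and is exactly the argument the paper intends: the corollary follows immediately by combining the bound $i(\gamma_1,\gamma_2)\le 2$ from Lemma \ref{intersection=2} with the bound $i(\gamma_1,\gamma_2)\ge 4$ from Lemma \ref{sametwocusps}, which is why the paper states it as a corollary without further proof.
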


Per Remark \ref{remark rank statement}, we apply Theorem \ref{induction} onto $S(X)$ which is a 2-system, for a hyperbolic surface $X$ that is large enough. We have

\begin{theorem}
\label{main theorem down}
    For any $k\ge0$, with finitely many exceptions on $(g,n)$, all critical points have Morse index greater than $k$ in $\mgn$, for the systole function or the $\syst$ functions.
\end{theorem}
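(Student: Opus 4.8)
The plan is to reduce the statement to the rank estimate of Theorem \ref{induction} applied to the set $S(X)$ of shortest geodesics. By Theorem \ref{Akrout's theorem} for the systole function, and by Theorem \ref{syst}(II) for the $\syst$ functions, at any critical point $X$ the Morse index equals $\rank\{\nabla l_\gamma\}_{\gamma\in S(X)}$. Since the goal is index $>k$, it suffices to produce, for all but finitely many $(g,n)$, the bound $\rank\{\nabla l_\gamma\}_{\gamma\in S(X)}\ge k+1$ at every critical point.

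First I would invoke Lemma \ref{intersection=2}, which tells us that $S(X)$ is a $2$-system. Applying Theorem \ref{induction} with $j=2$ and target rank $k+1$ furnishes a number $r_{k+1}$ such that, outside a finite set of exceptional $(g,n)$, every $2$-system of at least $r_{k+1}$ curves has gradient rank $\ge k+1$. Thus the whole statement follows once we know that $\#S(X)\ge r_{k+1}$ whenever $-\chi$ is large enough; note that the linear dependences among the gradients that keep the index small are harmless here, since Theorem \ref{induction} converts mere cardinality into rank.

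The remaining input is that at a critical point the shortest geodesics fill $X$. Granting this, $S(X)$ is a filling $2$-system, so Lemma \ref{minimalcardinality} gives $\#S(X)\ge m^2(g,n)>\sqrt{(4g-4+2n)/2}=\sqrt{-\chi}$, which tends to infinity as $-\chi\to\infty$. Hence as soon as $-\chi>r_{k+1}^2$ (and $(g,n)$ avoids the finite exceptional set of Theorem \ref{induction}) we obtain $\#S(X)>r_{k+1}$, and therefore $\rank\{\nabla l_\gamma\}_{\gamma\in S(X)}\ge k+1>k$. The exceptional pairs are exactly those with $-\chi\le r_{k+1}^2$ together with the finitely many exceptions inherited from Theorem \ref{induction}, so only finitely many remain.

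The one step I expect to require real work is the filling property. If $S(X)$ failed to fill, the hull $Y=\ssh(S(X))$ would be a proper subsurface with a non-peripheral boundary geodesic $\partial_0$ lying in the interior of $X$. Lengthening $\partial_0$ is a deformation $w\in T_X\tgn$ which should not shorten any $\gamma\in S(X)$ and which strictly lengthens at least one $\gamma$ running alongside $\partial_0$; projecting $w$ onto $V=\spn\{\nabla l_\gamma\}_{\gamma\in S(X)}$ would then yield a nonzero $u\in V$ with $\langle\nabla l_\gamma,u\rangle\ge0$ for all $\gamma$ and strict for some, making the origin a boundary point of the convex hull and contradicting eutacticity. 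The delicate ingredient is the monotonicity of geodesic length under lengthening of a boundary curve, which I would either cite or extract from Wolpert-type convexity of length functions, in the same spirit as the earthquake monotonicity of Lemma \ref{threecurves}; alternatively, the filling of systoles at critical points can be quoted as a known fact from the work of Akrout and Schmutz Schaller.
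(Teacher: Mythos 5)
Your proposal is correct and is essentially the paper's own proof: identify the Morse index at a critical point with $\rank\{\nabla l_\gamma\}_{\gamma\in S(p)}$ via Theorem \ref{Akrout's theorem} and Theorem \ref{syst}(II), note that $S(X)$ is a $2$-system by Lemma \ref{intersection=2}, and apply Theorem \ref{induction} with $j=2$. The only real difference is completeness: the paper's three-line proof never mentions the cardinality hypothesis $\#S\ge r_{k+1}$ of Theorem \ref{induction}, whereas you correctly flag that it must be verified, and you supply it exactly the way the paper itself does later (in the proof of the $\log\log$ bound in Section \ref{logloglowerbound}): at a critical point the systoles fill, so Lemma \ref{minimalcardinality} gives $\#S(X)>\sqrt{-\chi}$, which exceeds $r_{k+1}$ outside finitely many $(g,n)$. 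On your last paragraph: the filling property is indeed a standard fact one may cite, but the ``delicate'' length-monotonicity you worry about is avoidable with tools already in the paper. If $S(X)$ did not fill, take a simple closed geodesic $\delta$ disjoint from, and distinct from, every systole; then Lemma \ref{lengthgradientdisjoint} (Riera's formula) gives $\langle\nabla l_\gamma,\nabla l_\delta\rangle_{WP}>0$ for \emph{every} $\gamma\in S(X)$, and pairing $\nabla l_\delta$ with a convex combination $0=\sum_\gamma c_\gamma\nabla l_\gamma$ (which exists by eutacticity) yields $0>0$, a contradiction --- no projection onto the span, no boundary-point analysis, and strict positivity for all curves at once.
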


\begin{remark}
    This implies that there are only finitely many critical points of index at most $k$ in $\bigsqcup_{(g,n)}\mgn$, for the systole function or the $\syst$ functions. The critical points of index at most $k$ for the $\syst$ functions live in the Deligne-Mumford boundary $\partial\mgn$.
\end{remark}

\begin{proof}
    Let $p$ be a critical point for the systole function, and $p_T$ a critical point for $\syst$, as in Theorem \ref{syst}. Note that $$\ind_{\syst}(p_T)=\ind_{\sys}(p)=\rank\{\nabla l_\gamma\}_{\gamma\in S(p)}.$$
    This theorem follows from Theorem \ref{induction}.
\end{proof}

\section{Bounding the index gap}
\label{logloglowerbound}

In this section, we compute a lower bound for the lowest index of the systole function on $\mgn$, and the $\syst$ functions on $\mgn\subset\mgnb$.

Let the \textit{kissing number} $\kiss(X)$ of a surface $X$, possibly with boundary, be the maximum cardinality of $S(X)$. We use Przytycki's result stated in Theorem \ref{prz} to bound the $\kiss(Y)$ of a subsurface $Y$. Note that in the case when $Y$ has no boundary, an upper bound is given in \cite{fanoni2016systoles}.

\begin{lemma}
\label{kissingnumbersubsurface}
    For a $[g,n]$-surface $Y$, we have $$\#S(Y)\le g(4g+2n-3)^2+2g+n-3+\binom{n}{2}.$$
\end{lemma}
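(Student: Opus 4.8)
The plan is to split $S(Y)$ according to the dichotomy in Lemma \ref{intersection=2}. Write $S(Y)=B\sqcup R$, where $B$ is the set of curves in $S(Y)$ that bound two cusps and $R=S(Y)\setminus B$ is the remainder. The two pieces should be controlled by completely different means: $R$ will be forced to be a $1$-system, so that Przytycki's estimate (Theorem \ref{prz}) applies, while $B$ is governed purely by the combinatorics of the cusps and controlled by Corollary \ref{same two cusps}. Summing the two bounds should reproduce the three Przytycki terms plus the extra $\binom{n}{2}$.

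First I would check that $R$ is a $1$-system. Take $\gamma_1,\gamma_2\in R$ and suppose $i(\gamma_1,\gamma_2)=2$. By Lemma \ref{intersection=2} at least one of them bounds two cusps, contradicting membership in $R$; hence any two curves of $R$ meet at most once. Since whether a collection of curves forms a $1$-system depends only on geometric intersection numbers, and thus only on the homeomorphism type of $Y$, a $[g,n]$-surface behaves like a $(g,n)$-surface with its boundary components regarded as punctures, so Theorem \ref{prz} gives $\#R\le\cp^1(g,n)\le g(4g+2n-3)^2+2g+n-3$.

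Next I would bound $\#B$. Each curve of $B$ singles out the pair of cusps it bounds, and by Corollary \ref{same two cusps} distinct elements of $S(Y)$ cannot bound the same two cusps. Hence the map sending a curve in $B$ to its unordered pair of cusps is injective, and since there are at most $n$ cusps there are at most $\binom{n}{2}$ such pairs, giving $\#B\le\binom{n}{2}$. Adding the two estimates yields $\#S(Y)=\#B+\#R\le\binom{n}{2}+g(4g+2n-3)^2+2g+n-3$, which is exactly the claimed bound.

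The step requiring the most care is the transfer of the underlying facts to a surface with geodesic boundary, since Lemma \ref{intersection=2} and Corollary \ref{same two cusps} are phrased for $S(X)$ and Theorem \ref{prz} is proved for punctured surfaces. I would verify that the geometric content of the dichotomy (a shortest pair meeting twice forces a thin cusped pair of pants, whose boundary curve is essentially unique) depends only on local length comparison and survives when boundary geodesics are allowed to compete for being shortest, as in the extended definition of $S(Y)$. I would also note that any boundary curve appearing in $S(Y)$ is disjoint from every interior curve and therefore joins the $1$-system part $R$ harmlessly; the only genuine bookkeeping is confirming that the peripheral curves do not inflate $\#R$ beyond $\cp^1(g,n)$, which follows since Przytycki's bound already counts curves up to homotopy in the closed-up surface.
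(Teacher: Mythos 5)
Your proposal is correct and follows essentially the same route as the paper: the paper also splits $S(Y)$ into the curves bounding two cusps (bounded by $\binom{n}{2}$ via Lemma \ref{intersection=2} and Corollary \ref{same two cusps}) and the complementary $1$-system (bounded by Przytycki's Theorem \ref{prz}). Your write-up merely makes explicit the details the paper leaves implicit, including the transfer to surfaces with boundary.
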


\begin{proof}
    Let $S^2(X)$ be the subset of $S(X)$ consisting of curves that bound two cusps, and $S^1(X)$ the complement. Note that $S^1(X)$ is a 1-system, and by Lemma \ref{intersection=2} and Lemma \ref{same two cusps}, we have $\#S^2(X)\le\binom{n}{2}$.
\end{proof}

\begin{theorem}
    There exists a universal constant $C$ such that the lowest index of the systole function and $\syst$ functions on $\mgn$ is at least $C\log\log(g+n)$.
\end{theorem}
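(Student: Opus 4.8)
The goal is to translate the qualitative statement of Theorem \ref{main theorem down} into a quantitative one. The plan is to reverse-engineer the recursion $r_{k+1} > \max_{S_k} \cp^j(\overline{\ssh}(S_k))$ from the proof of Theorem \ref{induction}, and chain it with the kissing-number bound of Lemma \ref{kissingnumbersubsurface}. Concretely, the lowest index on $\mgn$ equals $\rank\{\nabla l_\gamma\}_{\gamma\in S(X)}$ at the minimizing critical point, so I would first find the largest $k$ for which Theorem \ref{induction} forces this rank to be at least $k$, given that $S(X)$ is a $2$-system (so $j=2$) with $\#S(X) \le \kiss(g,n)$. Since $\kiss(g,n)$ is polynomial in $g$ and $n$, hence polynomial in $-\chi$, the question reduces to: how fast does $r_k$ grow in $k$, and therefore how large must $-\chi$ be to guarantee $\#S(X)\ge r_k$?

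The heart of the matter is estimating the growth rate of $r_k$. From the induction, $r_{k+1}$ is controlled by $\cp^j(\overline{\ssh}(S_k))$ for a $j$-system $S_k$ of $r_k$ curves. First I would bound the complexity of $\overline{\ssh}(S_k)$: by Lemma \ref{maxhull}, $\#^p\ssh(S_k) \le j\binom{r_k}{2}$, and by Lemma \ref{essential closure upper bound}, $\#^p\overline{\ssh}(S_k) < 2j\binom{r_k}{2}+2$, which is $O(j\, r_k^2)$. Writing $N_k := \#^p\overline{\ssh}(S_k) = -\chi(\overline{\ssh}(S_k))$, I feed this into Theorem \ref{maxcap}. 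The capacity bound there is dominated by the term $(2jM(M-1))^{jM}$ where $M = M(g',n') \le 3g'+n' = O(N_k)$ by Theorem \ref{maxm}. Thus $\cp^j(\overline{\ssh}(S_k))$ is roughly $N_k^{O(j N_k)} = \ex(O(j N_k \log N_k))$, and since $N_k = O(j r_k^2)$, we obtain a recursion of the shape
\begin{equation}
\label{eq:rk-recursion}
    r_{k+1} \le \ex\left(c\, j^2\, r_k^2 \log(j r_k)\right)
\end{equation}
for a universal constant $c$. This is a doubly-exponential-type recursion: each step roughly exponentiates the previous value, so $r_k$ grows like an iterated exponential, $r_k \approx \ex^{(k)}(\text{const})$ (a tower of height $k$).

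Inverting \eqref{eq:rk-recursion} gives the final bound. If $r_k$ grows like a tower of exponentials of height $k$, then the condition $\#S(X) \ge r_k$, guaranteed once $-\chi$ is large, is met for the largest $k$ with $r_k \lesssim \kiss(g,n)$. Since $\kiss(g,n)$ is polynomial in $-\chi$, I would solve $\ex^{(k)}(\text{const}) \le \text{poly}(-\chi)$ for $k$. Taking logarithms $k$ times, one $\log$ cancels the polynomial and the iterated structure collapses so that the admissible $k$ is on the order of $\log\log(-\chi)$. Carefully tracking constants and the additive $+1$ to handle small $-\chi$, this yields a universal $C>0$ with lowest index at least $C\log\log(-\chi+1)$, using that this index equals $\rank\{\nabla l_\gamma\}_{\gamma\in S(X)} \ge k$ via Theorem \ref{main theorem down}.

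The main obstacle I anticipate is bookkeeping the iterated exponential cleanly enough to extract a genuine $\log\log$ and not merely $\log\log\log$ or worse. The recursion \eqref{eq:rk-recursion} has both the $r_k^2$ and the $\log(jr_k)$ factors inside an exponential, so a naive iteration could inflate the tower height or the base, degrading the final bound. The key will be to verify that squaring and the logarithmic factors are absorbed into the exponent without changing the \emph{height} of the tower as a function of $k$ — i.e., that $\log r_{k+1} = \Theta(r_k^2 \log r_k)$ still behaves, after one more logarithm, like $\log\log r_{k+1} \approx \log r_k$ up to bounded multiplicative error. Establishing this stability of the double-log under the recursion, uniformly in $j$ (here $j=2$ is fixed, which helps), is the delicate quantitative step; everything else is assembling the already-proved lemmas.
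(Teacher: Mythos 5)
Your overall framework (the induction scheme of Theorem \ref{induction} with $j=2$ since $S(X)$ is a $2$-system, then inverting the growth of $r_k$) matches the paper's, but there is a genuine quantitative gap that breaks the result: you run the recursion with the $j$-capacity bound of Theorem \ref{maxcap}, which is exponential, so $\log r_{k+1}=\Theta(r_k^2\log r_k)$. This is genuinely tower-type growth: setting $a_k=\log\log r_k$, your own ``stability'' estimate $\log\log r_{k+1}\approx\log r_k$ reads $a_{k+1}\approx e^{a_k}$, so $a_k$ itself grows like a tower of height $k$; the stability you hope for does not tame the tower, it \emph{is} the tower. Inverting $r_k\le\mathrm{poly}(-\chi)$ then yields only $k\lesssim\log^*(-\chi)$ (the inverse-tower, iterated-logarithm function), not $C\log\log(-\chi+1)$. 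The claim that ``one $\log$ cancels the polynomial and the iterated structure collapses'' is false: each logarithm peels off exactly one level of the tower, so no amount of bookkeeping of the constants extracts a double logarithm from this recursion.

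The idea you are missing is that at critical points one never needs the capacity of an arbitrary $2$-system: the sets $S_k$ in the induction are subsets of $S(X)$, and any curve of $S(X)$ lying in a subsurface $Y=\overline{\ssh}(S_k)$ is automatically a shortest closed geodesic of $Y$ (boundary curves admitted), because $Y$ is isometrically embedded, so every closed geodesic of $Y$ is one of $X$ and has length at least $\sys(X)$. Hence the number of curves of $S(X)$ inside $Y$ is bounded by the kissing number $\kiss(Y)$, and Lemma \ref{kissingnumbersubsurface} (Przytycki's $1$-system bound from Theorem \ref{prz} plus the $\binom{n}{2}$ count of curves bounding two cusps, via Lemma \ref{intersection=2} and Corollary \ref{same two cusps}) bounds this \emph{polynomially} in the complexity of $Y$. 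This replaces your exponential recursion by $r_{k+1}=Cr_k^6$, whence $r_k=C(Cr_2)^{6^{k-2}}$ is only doubly exponential in $k$, i.e.\ $\log\log r_k=\Theta(k)$. Combined with the lower bound $\#S(X)\ge\sqrt{-2\chi(X)}$ from Lemma \ref{minimalcardinality} --- which holds because $S(X)$ fills at a eutactic point, and which is the bound on $\#S(X)$ you actually need (the polynomial \emph{upper} bound $\kiss(g,n)$ for the whole surface that you invoke points in the wrong direction and plays no role) --- this inverts to index at least $C\log\log(-\chi+1)$.
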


\begin{proof}
We use the same induction argument as in the proof of Theorem \ref{induction}. By Corollary \ref{rank=2} in the next section, set $r_2=3$. Suppose $r_k$ is given as in Theorem \ref{induction}. We will pick $$r_{k+1}>\max_{S_k\subset S(X), \#S_k=r_k}\kiss(\overline{\ssh}(S_k)).$$

Note that for any $S_k\subset S(X)$ with $\#S_k=r_k$, we have

$$\#^p\ssh(S_k)\le 2\binom{r_k}{2}$$

and

$$\#^p\overline{\ssh}(S_k)\le 2\#^p\ssh(S_k)+2.$$

For $r_k>3$, by Lemma \ref{kissingnumbersubsurface}, we have $$\max_{S_k}\kiss(\overline{\ssh}(S_k))<\frac{1}{2}(r_k^2+2)(4r_k^2+5)^2+2r_k^2+1+\frac{1}{2}(r_k^2+2)^2<Cr_k^6.$$

We set $$r_{k+1}=Cr_k^6,$$ for $k\ge 2$, then $$r_k=C(Cr_2)^{6^{k-2}}.$$

On the other hand, for a critical point $X$, by Lemma \ref{minimalcardinality}, we have $$\#S(X)\ge\sqrt{-2\chi(X)},$$ and then

$$\ind(X)\ge\max\{k|\#S(X)\ge r_k\}\ge\max\{k|\sqrt{-2\chi(X)}\ge r_k\}.$$

The theorem follows.
\end{proof}

\section{Classification of Low Index Critical Points}
\label{classification}

We give more explicit results on Theorem \ref{induction} for some small $k$ in the settings of shortest geodesics.

\begin{lemma}
\label{rank=2}
    Suppose $(g,n)(X)\neq (1,1),(0,4)$, then for distinct $\gamma_1,\gamma_2\in S(X)$, $$\rank\{\nabla l_1,\nabla l_2\}=2.$$  
\end{lemma}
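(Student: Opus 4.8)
The plan is to show that $\nabla l_1$ and $\nabla l_2$ are linearly independent; since these are only two vectors, $\rank\{\nabla l_1,\nabla l_2\}\le 2$ holds trivially, so the entire content is to rule out parallelism. Because a geodesic-length gradient never vanishes, it suffices to exhibit a single tangent direction $v\in T_X\tgn$ with $\langle\nabla l_1,v\rangle=0\neq\langle\nabla l_2,v\rangle$ (or the symmetric statement with the roles of $\gamma_1,\gamma_2$ swapped): pairing a hypothetical relation $a\nabla l_1+b\nabla l_2=0$ against such a $v$ forces $b=0$, and then $a\nabla l_1=0$ forces $a=0$. I will always take $v$ to be an earthquake/twist vector $\tau_\delta$ and compute the pairings through Kerckhoff's cosine formula $\langle\nabla l_\gamma,\tau_\delta\rangle=\sum_{p\in\gamma\cap\delta}\cos\theta_p$ from \cite{kerckhoff1983nielsen}. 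In particular, if $\delta$ is disjoint from $\gamma_1$ then $\langle\nabla l_1,\tau_\delta\rangle=0$ automatically, so the task reduces to producing a simple closed geodesic $\delta$ with $i(\delta,\gamma_1)=0$ and nonzero twist pairing against $\gamma_2$.

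A quick first reduction disposes of most configurations without any auxiliary curve. By Lemma \ref{intersection=2} we have $i(\gamma_1,\gamma_2)\le 2$. Taking $v=\tau_{\gamma_1}$ and $v=\tau_{\gamma_2}$, the self-twist pairings vanish, $\langle\nabla l_1,\tau_{\gamma_1}\rangle=\langle\nabla l_2,\tau_{\gamma_2}\rangle=0$, while the cross pairings both equal $c:=\sum_{p\in\gamma_1\cap\gamma_2}\cos\theta_p$ up to sign. Pairing the relation $a\nabla l_1+b\nabla l_2=0$ with $\tau_{\gamma_1}$ and with $\tau_{\gamma_2}$ then yields $bc=0$ and $ac=0$, so whenever $\gamma_1$ and $\gamma_2$ intersect with $c\neq 0$ we immediately get $a=b=0$ and the lemma follows. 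This leaves only two remaining cases to treat: $\gamma_1,\gamma_2$ disjoint, and $\gamma_1,\gamma_2$ intersecting with total cosine $c=0$ (in particular the orthogonal case).

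For these remaining cases I would import the earthquake technique from the proof of Lemma \ref{essential}. Suppose I can find a simple closed geodesic $\delta$ with $i(\delta,\gamma_1)=0$ and $i(\delta,\gamma_2)\neq0$, together with an auxiliary simple closed geodesic $\lambda$ with $i(\lambda,\gamma_1)=0$ and $i(\lambda,\delta)\neq 0$. Applying Lemma \ref{threecurves} with $\alpha=\gamma_2$ and this pair $(\delta,\lambda)$, the intersection angles of $\gamma_2$ with the earthquake image of $\delta$ vary monotonically, so $\sum_i\cos\theta_i(t)$ is monotone and has only finitely many zeros. Evaluating at an integer time $t=n$ turns the earthquake into a Dehn twist, producing a genuine curve $\delta_n=T_\lambda^n(\delta)$ on $X$ with $\langle\nabla l_2,\tau_{\delta_n}\rangle\neq 0$; since $\lambda$ and $\delta$ are disjoint from $\gamma_1$, the twisted curve $\delta_n$ is still disjoint from $\gamma_1$, so $\langle\nabla l_1,\tau_{\delta_n}\rangle=0$. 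Thus $\tau_{\delta_n}$ is the separating direction sought in the first paragraph. (When $c=0$ arises already from a single transverse intersection, this is precisely the step that rotates a perpendicular crossing off of orthogonality; one may alternatively apply Lemma \ref{extra rank} to $\ssh(\gamma_1)$ once $\gamma_2\not\subset\overline{\ssh}(\gamma_1)$ is known, but the twist argument is uniform.)

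The main obstacle, and the only place the hypothesis $(g,n)(X)\neq(1,1),(0,4)$ is used, is establishing the existence of $\delta$ (and $\lambda$) in the complement of $\gamma_1$. I would argue this by a short topological case analysis on $i(\gamma_1,\gamma_2)$. If $\gamma_1,\gamma_2$ are disjoint, then distinct disjoint essential simple closed curves do not exist on $(1,1)$ or $(0,4)$, so $X$ strictly exceeds these; the component of $X\setminus\gamma_1$ containing $\gamma_2$ is then not an annular neighborhood of $\gamma_2$ (otherwise $\gamma_2$ would be isotopic to $\gamma_1$), hence it carries a curve meeting $\gamma_2$ and automatically disjoint from $\gamma_1$. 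If $i(\gamma_1,\gamma_2)=1$, the hull $\ssh(\gamma_1,\gamma_2)$ is a $[1,1]$-subsurface $H$, and if $i(\gamma_1,\gamma_2)=2$ then by Lemma \ref{intersection=2} one curve bounds two cusps and $H$ is a $[0,4]$-subsurface; in either case $H\subsetneq X$ precisely because $X$ is not the corresponding rigid exception, so a boundary component of $H$ is non-peripheral in $X$. I then build $\delta$ from a simple arc in $H$ that separates the two sides of $\gamma_1$ (hence crosses $\gamma_2$ but avoids $\gamma_1$), closed up through the nontrivial complement $X\setminus H$; an auxiliary $\lambda$ disjoint from $\gamma_1$ and meeting $\delta$ is produced the same way. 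Verifying that these arcs can be chosen simple and essential, and that the two exceptional surfaces are exactly the cases where the construction fails (matching the known relation $\nabla l_1+\nabla l_2=0$ on the square torus), is the delicate part of the argument and the step I would write out in full detail.
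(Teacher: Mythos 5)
Your overall strategy is sound and, modulo the deferred details, it works; but note that the paper's actual proof is exactly the alternative you mention parenthetically, applied uniformly to \emph{all} cases rather than only the $c=0$ one. The paper views $\gamma_1$ itself as the degenerate subsurface $\ssh(\gamma_1)$, notes it is non-essential in $X$ precisely when $(g,n)(X)\neq(1,1),(0,4)$, and observes (implicitly) that the essential closure $\overline{\ssh}(\gamma_1)$ is an annulus, possibly with one pair of pants attached when $\gamma_1$ bounds two cusps, hence contains no simple closed geodesic other than $\gamma_1$; therefore $\gamma_2\not\subset\overline{\ssh}(\gamma_1)$ and Lemma \ref{extra rank} gives $\nabla l_2\notin T_X^{\ssh(\gamma_1)}\mathcal T$, a subspace containing $\nabla l_1$, so the rank is $2$. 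This disposes of your three cases (disjoint, $c\neq0$, $c=0$) at once, with all the earthquake machinery already packaged inside Lemma \ref{extra rank}. Your Kerckhoff-pairing reduction via $\tau_{\gamma_1},\tau_{\gamma_2}$ is correct and gives an explicit separating direction in the generic intersecting case, but it is ultimately unnecessary; what the paper's route buys is brevity and, more importantly, the avoidance of the one genuinely dangerous step in your construction.

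That dangerous step deserves a concrete warning: when you build $\delta$ from an arc $a$ in $H=\ssh(\gamma_1,\gamma_2)$ closed up through an arc $b$ in the complement, ``essential'' must be made to include ``not isotopic to $\gamma_1$.'' In the $[1,1]$ case, if $b$ is boundary-parallel in $X\setminus H$, then $\delta=a\cup b$ is isotopic to $a$ joined with an arc of $\partial H$, which (being simple, disjoint from $\gamma_1$, and meeting $\gamma_2$ once) is isotopic to $\gamma_1$; its geodesic representative is then $\gamma_1$ itself, $\lambda$ satisfies $i(\lambda,\delta)=i(\lambda,\gamma_1)=0$ so the hypothesis of Lemma \ref{threecurves} fails, $T_\lambda^n(\delta)\simeq\gamma_1$ for every $n$, and the pairing you produce is $\pm c=0$ — the argument collapses back onto the obstruction you started with. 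The fix is to choose both $a$ and $b$ essential and then run a no-bigon argument showing $i(\delta,\partial H)=2>0$, so the geodesic representative of $\delta$ leaves $\overline{\ssh}(\gamma_1)$; alternatively (and more cleanly), since $\gamma_2\not\subset\overline{\ssh}(\gamma_1)$, the curve $\gamma_2$ crosses a complementary region $Z$ of $\overline{\ssh}(\gamma_1)$ which by construction is not a pair of pants, so as in the proof of Lemma \ref{essential} one can take both $\delta$ and $\lambda$ inside $Z$, crossing $\gamma_2$ and $\delta$ respectively; they are then automatically disjoint from $\gamma_1$ and essentially non-isotopic to it, and no closing-up through two subsurfaces is needed.
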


\begin{proof}
    It is trivial that $$\rank\{\nabla l_1\}=\rank\{\nabla l_2\}=1.$$

    Consider $\gamma_1=\ssh(\gamma_1)$. Note that $\gamma_1$ is non-essential in any hyperbolic surface $X$ except when $(g,n)(X)=(1,1)$ or $(0,4)$. In any non-exceptional case, following Lemma \ref{extra rank}, we have $$\rank\{\nabla l_1,\nabla l_2\}>\rank\{\nabla l_1\}=1,$$ that is, $\rank\{\nabla l_1,\nabla l_2\}=2$.
\end{proof}

We recall a direct corollary of Riera's formula.
\begin{lemma}[\cite{riera2005formula}, \cite{wolpert1987geodesic}]
\label{lengthgradientdisjoint}
Let $\gamma_i,\gamma_j$ be disjoint or identical geodesics, then $$\left\langle\nabla l_i,\nabla l_j\right\rangle_{\textit{WP}}>0.$$
\end{lemma}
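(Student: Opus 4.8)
The plan is to read the inequality off Riera's formula, reducing everything to the positivity of a single elementary function. Writing $A,B$ for hyperbolic elements of the Fuchsian group representing $\gamma_i$ and $\gamma_j$, Riera's formula \cite{riera2005formula} expresses the pairing as
$$\langle \nabla l_i, \nabla l_j \rangle_{WP} = \frac{2}{\pi}\left( l_i\,\delta_{ij} + \sum \Bigl( u\log\frac{u+1}{u-1} - 2 \Bigr)\right),$$
where $\delta_{ij}$ is the Kronecker symbol, the sum runs over the relevant double cosets (i.e. over configurations of one lift of $\gamma_i$ against the distinct lifts of $\gamma_j$), and for each term $u=u(A,gBg^{-1})\ge 1$ is the geometric parameter attached to the pair of lifted axes, normalized so that $u=\cosh d$ with $d$ the distance between the axes. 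Convergence of the series is part of Riera's theorem, so I would take it as given and focus only on signs. First I would dispose of the identical case $\gamma_i=\gamma_j$ without the formula at all: there $\langle \nabla l_i,\nabla l_i\rangle_{WP}=\|\nabla l_i\|_{WP}^2>0$ because $l_i$ is a nonconstant (submersive) function on $\tgn$, so its gradient never vanishes. This sidesteps any self-intersection subtleties in the diagonal sum.

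The analytic core, for the disjoint case, is the claim that the summand
$$f(u):=u\log\frac{u+1}{u-1}-2$$
is strictly positive for every $u>1$. I would prove this by a short monotonicity argument: a direct computation gives
$$f'(u)=\log\frac{u+1}{u-1}-\frac{2u}{u^2-1}, \qquad f''(u)=\frac{4}{(u^2-1)^2}>0,$$
so $f'$ is strictly increasing on $(1,\infty)$; since $f'(u)\to 0$ as $u\to\infty$, this forces $f'(u)<0$ throughout $(1,\infty)$, i.e. $f$ is strictly decreasing. Because $u\log\frac{u+1}{u-1}\to 2$, we have $\lim_{u\to\infty}f(u)=0$, and strict decrease then yields $f(u)>0$ for all $u>1$.

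It remains to check that disjointness forces $u>1$ in \emph{every} term, and this is the one step I would treat most carefully, since it is the only genuinely geometric point. If $\gamma_i,\gamma_j$ are disjoint geodesics on $X$, then no lift of one crosses any lift of the other in $\mathbb H^2$ (a crossing of lifts would project to a crossing on $X$), so every pair of axes appearing in the sum is disjoint. Moreover distinct hyperbolic axes in a discrete torsion-free Fuchsian group cannot share an ideal endpoint, so the axes are ultraparallel at \emph{strictly positive} distance $d>0$; hence $u=\cosh d>1$ in each term, and the degenerate value $u=1$ (tangency) and the intersecting configurations (which would force $u\le 1$) are both excluded precisely by disjointness. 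Consequently the entire sum is a nonempty sum of strictly positive terms $f(u)$, and together with the nonnegative diagonal contribution $\tfrac{2}{\pi}l_i\delta_{ij}$ this gives $\langle \nabla l_i,\nabla l_j\rangle_{WP}>0$ in both the disjoint and identical cases. The main obstacle, then, is not the calculus but the correct geometric identification of $u$ and the verification via discreteness that no term degenerates; once that is secured, the positivity is immediate.
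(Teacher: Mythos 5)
Your proof is correct and is essentially the argument the paper has in mind: the paper states this lemma without proof, as a direct corollary of Riera's formula, deferring to \cite{riera2005formula} and \cite{wolpert1987geodesic}. Your write-up simply supplies the verification behind that citation --- the positivity of $u\log\frac{u+1}{u-1}-2$ for $u>1$, and the fact that disjointness on the surface forces every pair of lifted axes to be ultraparallel (at strictly positive distance, by discreteness) --- so it matches the intended route rather than offering a different one.
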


\begin{lemma}
\label{rank=3}
    Suppose $(g,n)(X)\neq (1,1),(0,4),(1,2),(0,5)$, then for distinct $\gamma_1,\gamma_2,\gamma_3\in S(X)$, $$\rank\{\nabla l_1,\nabla l_2,\nabla l_3\}=3.$$
\end{lemma}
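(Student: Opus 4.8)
The plan is to reproduce the expansion argument of Lemma \ref{rank=2} one step higher, using Lemma \ref{essential} in place of a single application of Lemma \ref{extra rank}. By Lemma \ref{rank=2}, since $(g,n)\neq(1,1),(0,4)$, we already have $\rank\{\nabla l_1,\nabla l_2\}=2$ for any two of the three curves, so the entire problem is to show that the third gradient escapes their span. Writing $Y_2=\ssh(\{\gamma_1,\gamma_2\})$ and $Y_3=\ssh(\{\gamma_1,\gamma_2,\gamma_3\})$, Lemma \ref{essential} yields $\rank\{\nabla l_1,\nabla l_2\}<\rank\{\nabla l_1,\nabla l_2,\nabla l_3\}$ as soon as (i) $Y_2\subsetneqq Y_3$, that is $\gamma_3\not\subset\ssh(\{\gamma_1,\gamma_2\})$, and (ii) $Y_2$ is not essential in $Y_3$. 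So the first step is to argue that, after possibly relabeling the three curves, conditions (i) and (ii) hold unless $X$ is one of the four listed types.

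First I would dispose of the non-rigid configurations: whenever some $\gamma_k$ genuinely enlarges the hull of the other two and leaves a non-pants complementary region (for instance when the three curves are pairwise disjoint), conditions (i) and (ii) hold and we are done. The remaining case is when \emph{every} relabeling fails one of (i), (ii), and I would show this forces all three curves into a single essential subsurface of small complexity. Indeed, recalling that $S(X)$ is a $2$-system by Lemma \ref{intersection=2}, Lemma \ref{maxhull} gives $\#^p\,\ssh(\{\gamma_1,\gamma_2,\gamma_3\})\le 2\binom{3}{2}=6$, and Lemma \ref{essential closure upper bound} bounds the complexity of the essential closure; hence the three geodesics essentially fill a subsurface $W$ with $-\chi(W)$ bounded by an absolute constant.

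The key reduction is that, since $W$ is essential in $X$, its complement is a union of pairs of pants and therefore carries no moduli: the natural identification matches $T_X^{W}\mathcal T$ with the tangent space of the Teichm\"uller space of $W$, and under it the three gradients $\nabla l_i$ correspond to the gradients of the same length functions computed on $W$. Thus $\rank\{\nabla l_1,\nabla l_2,\nabla l_3\}$ is intrinsic to $W$, and it suffices to verify independence for a triple of shortest geodesics filling $W$ over the finite list of admissible types of $W$. This list I would cut down using the systole-specific structure: Lemma \ref{intersection=2}, Corollary \ref{separatingsystole} and Corollary \ref{same two cusps} restrict which intersection patterns can occur among shortest geodesics, while Riera's positivity (Lemma \ref{lengthgradientdisjoint}) forbids cancellation among gradients of disjoint curves and so eliminates most potential dependencies at once.

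The main obstacle is precisely this finite check, together with the claim that the genuine rank drops occur only for $W\in\{(1,1),(0,4),(1,2),(0,5)\}$ and, crucially, \emph{force} $X$ to be of that type rather than merely to contain such a $W$. The delicate point is that a rank drop inside a small essential subsurface would, by the reduction above, propagate to $X$; so I must rule out that three shortest geodesics of a large surface essentially fill a $(1,2)$- or $(0,5)$-subsurface with coplanar gradients. I expect to settle this by showing that the degenerate triples on $(1,2)$ and $(0,5)$ require a rigid symmetric geometry—forcing equalities among the lengths of the curves and of $\partial W$—that is incompatible with those curves remaining globally shortest once $W$ is properly and essentially embedded in a larger $X$, since the bounding pants of $W$ would then have to carry a geodesic no longer than the systole, contradicting that the three given curves are shortest. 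This incompatibility argument is where the real work lies.
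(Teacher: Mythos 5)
Your proposal assembles the right toolkit (Lemma \ref{rank=2}, Lemma \ref{essential}, Riera's positivity, the systole combinatorics), but its central step --- the ``key reduction'' --- has a genuine gap. You claim that if the three curves essentially fill a subsurface $W\subset X$, then the complement carries no moduli, so $T_X^{W}\mathcal T$ is naturally the tangent space of the Teichm\"uller space of $W$ and the $\nabla l_i$ become the intrinsic gradients on $W$, making the rank computable on $W$ alone. This is not true: even when every complementary region of $W$ is a pair of pants, the lengths of the boundary geodesics of $W$ and the twists along them are still deformations of $X$, and the Weil--Petersson gradients of $l_{\gamma_i}$ in $T_X\mathcal T$ do not correspond to gradients computed in a Teichm\"uller space of $W$ with its boundary data frozen. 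The paper's own treatment of the case $\gamma_3\subset\ssh\{\gamma_1,\gamma_2\}$ shows exactly what goes wrong: three shortest geodesics pairwise intersecting once fill a $(1,0,1)$- or $(0,3,1)$-subsurface with $\mathbb Z/3$ symmetry, their projections to the $2$-dimensional tangent subspace given by that subsurface have rank only $2$, and the third dimension is recovered only because $\langle\nabla l_i,\nabla l_\delta\rangle>0$ for the boundary geodesic $\delta$ (Lemma \ref{lengthgradientdisjoint}). Under your identification this configuration would register as a rank drop inside $W$, which your strategy would then have to ``rule out''; but nothing makes it non-realizable, and the correct conclusion is that its rank in $T_X\mathcal T$ is $3$ --- something your reduction cannot see.

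Relatedly, the step you yourself flag as ``where the real work lies'' --- the rigidity/incompatibility argument excluding degenerate triples in small essential subsurfaces of a large $X$ --- is never carried out, and it is not how the paper closes the argument. The paper never excludes such configurations by rigidity; instead, for non-exceptional $X$ it shows the hull $Y_{12}$ of two of the curves (a $[1,1]$ if $i(\gamma_1,\gamma_2)=1$, a $[0,4]$ with at least two punctures if $i(\gamma_1,\gamma_2)=2$, the latter forced by Lemma \ref{intersection=2}) is non-essential \emph{in $X$}, then applies Lemma \ref{essential} and Lemma \ref{extra rank} with $\gamma_3$; the residual types $(1,3)$ and $(0,6)$ are eliminated by systole combinatorics (Corollary \ref{separatingsystole} and Corollary \ref{same two cusps} force two of the curves to have a non-essential hull with $\gamma_3$ outside its essential closure). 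A smaller error: pairwise disjoint triples do \emph{not} satisfy your conditions (i), (ii) for Lemma \ref{essential} --- the relevant complementary region is an annulus, containing no $[1,1]$ or $[0,4]$, and the earthquake argument in that lemma's proof needs two intersecting curves there; disjoint curves must be handled by Riera's positivity directly, as the paper does.
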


\begin{proof}
    If the union of $\gamma_1,\gamma_2,\gamma_3$ is not connected, using Lemma \ref{lengthgradientdisjoint}, it reduces to the case of two curves as above. Suppose $\gamma_1$ intersects $\gamma_2$ and $\gamma_3$, and let $Y_{12}=\ssh\{\gamma_1,\gamma_2\}\subset X$.
    
    For the case of $\gamma_3\in Y_{12}$, given the equal length of all $\gamma_i$'s, $Y_{12}$ as a $(1,0,1)$ or $(0,3,1)$-subsurface is determined and has a $\mathbb Z/3$ rotational symmetry, then $\{\nabla l_1,\nabla l_2,\nabla l_3\}$ has rank 2 when projected onto the 2-dimensional tangent subspace at $X$ of $\mathcal T(X)$ given by $Y_{12}$ (boundary not considered). Let $\delta$ be the geodesic boundary of $Y_{12}$, then $$\langle\nabla l_i,\nabla l_\delta\rangle>0$$ by Lemma \ref{lengthgradientdisjoint}. Therefore, $$\rank\{\nabla l_1,\nabla l_2,\nabla l_3\}=3.$$
    
    Suppose now $\gamma_3\not\in Y_{12}$. 
        
    If $i(\gamma_1,\gamma_2)=1$, then $[g,n](Y_{12})=[1,1]$, and $Y_{12}$ is non-essential in any $X$ when $(g,n)(X)\neq(1,2)$.
    
    If $i(\gamma_1,\gamma_2)=2$, then $[g,n](Y_{12})=[0,4]$, and at least one of $\gamma_1$ and $\gamma_2$ bounds two cusps, so $Y_{12}$ has at least two punctures. $Y_{12}$ is non-essential in any $X$ when $(g,n)(X)\neq(1,3),(0,5)$ or $(0,6)$.
    
    For any topological type of $X$ other than the ones mentioned above, the conclusion follows from the previous lemma on rank 2 and Lemma \ref{essential}. Furthermore, the following two topological types can also be excluded, where we do not assume that $\gamma_1$ intersects $\gamma_2$ and $\gamma_3$.
    
    Suppose $(g,n)(X)=(1,3)$. Consider the subset $S_c=\{\gamma_i,\gamma_i \text{ bounds 2 cusps}\}$.
    
    If $\#S_c=0$ or 1, suppose $\gamma_1,\gamma_2\not\in S_c$, then $\gamma_1$ and $\gamma_2$ are non-separating by Corollary \ref{separatingsystole}. If $\gamma_1$ and $\gamma_2$ intersect, then $Y_{12}=\ssh\{\gamma_1,\gamma_2\}$ is non-essential in $X$, as the complement is a $[0,4]$-subsurface. If $\gamma_1$ and $\gamma_2$ are disjoint, then $Y_{12}$ is non-essential, as a component of the complement is a $[0,4]$-subsurface.
    
    If $\#S_c=2$ or 3, suppose $\gamma_1,\gamma_2\in S_c$, then each of $\gamma_1$ and $\gamma_2$ bounds a pair of cusps, which by Lemma \ref{same two cusps}, are not the same. Therefore, $Y_{12}$ is a $(0,3,1)$-subsurface, and is non-essential in $X$.

    In either case, the rank result follows from Lemma \ref{lengthgradientdisjoint}.
    
    Suppose $(g,n)(X)=(0,6)$. Since any closed geodesic is separating, and in any pair of curves in $S(X)$, by Lemma \ref{intersection=2}, at least one bounds two cusps, then at least two of the $\gamma_i$'s bound a pair of cusps, say $\gamma_1$ and $\gamma_2$. Then $\ssh\{\gamma_1,\gamma_2\}$ is a $(0,3,1)$-subsurface and is therefore non-essential in $X$. As $\gamma_3\not\subset\overline{\ssh}\{\gamma_1,\gamma_2\}$, we have $$\rank\{\nabla l_1,\nabla l_2,\nabla l_3\}\ge\rank\{\nabla l_1,\nabla l_2\}+1=3.$$

\end{proof}

The two lemmas above imply that no critical points of any of the indices exist in any non-exceptional moduli spaces. For the exceptional cases, the critical points for the systole function are known thanks to Schmutz Schaller in \cite{schaller1999systoles}. We give the classification of critical points for the systole function, of low indices, namely, 0, 1 and 2. For each figure in the following theorems, there exists a unique surface where the colored curves are exactly the shortest geodesics, with the given information on intersection or symmetry.

\begin{theorem}
    All critical points of index 0 are shown in Figure \ref{fig:(0,3)_0}.
\end{theorem}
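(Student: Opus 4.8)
The plan is to reduce the statement to a dimension count via Akrout's characterization of the index. By Theorem \ref{Akrout's theorem}, at a critical point $X$ the index of the systole function equals $\rank\{\nabla l_\gamma\}_{\gamma\in S(X)}$, so an index-$0$ critical point is exactly one for which $\spn\{\nabla l_\gamma\}_{\gamma\in S(X)}=\{0\}$; equivalently, every length gradient of a shortest closed geodesic is the zero vector in $T_X\tgn$.

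First I would record two elementary facts. On any finite-area hyperbolic surface with $-\chi>0$ the length spectrum is discrete and bounded below by a positive number, so $S(X)\neq\varnothing$. Second, on a positive-dimensional Teichm\"uller space a geodesic-length function has no critical point---one can always strictly decrease $l_\gamma$ by a suitable Fenchel--Nielsen deformation---so $\nabla l_\gamma\neq 0$ for every closed geodesic $\gamma$. Combining these, whenever $\dim\tgn>0$ we obtain $\rank\{\nabla l_\gamma\}_{\gamma\in S(X)}\ge 1$, hence index at least $1$.

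It follows that an index-$0$ critical point can occur only when $\dim_{\mathbb R}\tgn=6g-6+2n=0$, i.e. $3g+n=3$. The only nonnegative solutions are $(g,n)=(0,3)$ and $(g,n)=(1,0)$, and since the torus has $\chi=0$ and admits no hyperbolic structure, $X$ must be the thrice-punctured sphere. Conversely, $\mathcal M_{0,3}$ is a single rigid point with trivial tangent space, so all length gradients vanish, the eutacticity condition holds vacuously, and the index equals $0$. This is precisely the surface of Figure \ref{fig:(0,3)_0}, so the two inclusions together give the classification.

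The only nonroutine ingredient is the nonvanishing of an individual length gradient on a positive-dimensional Teichm\"uller space (equivalently, the strict monotonicity of $l_\gamma$ along some deformation); once that is in hand, together with $S(X)\neq\varnothing$, the remainder of the argument is purely a matter of counting the real dimension of $\tgn$.
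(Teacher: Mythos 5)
Your proof is correct, but it takes a different route from the paper. The paper rules out low-index critical points in non-exceptional moduli spaces via Lemma \ref{rank=2} (any two distinct shortest geodesics have rank-$2$ gradient sets unless $(g,n)=(1,1)$ or $(0,4)$), and then disposes of the exceptional cases $(1,1)$, $(0,4)$ by appealing to Schmutz Schaller's known classification of critical points there. Your argument bypasses both ingredients: since index $0$ means $\rank\{\nabla l_\gamma\}_{\gamma\in S(X)}=0$, i.e.\ \emph{all} systole gradients vanish, and since geodesic-length functions of the (necessarily simple) shortest geodesics have nowhere-vanishing gradients on any positive-dimensional Teichm\"uller space, an index-$0$ critical point can only live where $6g-6+2n=0$; the torus is excluded for lack of a hyperbolic structure, leaving the rigid point $\mathcal M_{0,3}$, which is conversely critical of index $0$ (vacuously eutactic, trivial tangent space). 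This dimension count is more elementary and self-contained, and it treats the exceptional moduli spaces uniformly rather than quoting an external classification; the paper's heavier rank machinery, on the other hand, is what carries the genuinely harder index-$1$ and index-$2$ classifications, where a pure dimension count says nothing. One small point worth making explicit in your write-up: the nonvanishing of $\nabla l_\gamma$ via a Fenchel--Nielsen deformation uses that $\gamma$ is simple, which holds for shortest closed geodesics by the standard surgery argument.
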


\begin{figure}[h]
    \centering
    \includegraphics[width=5cm]{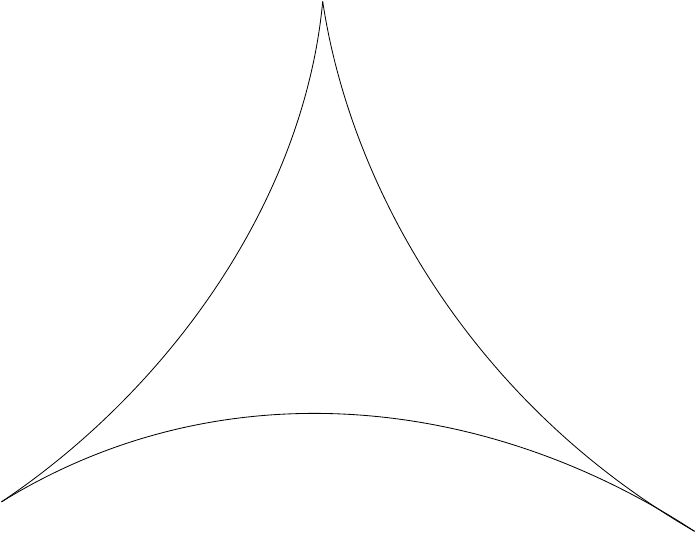}
    \caption{$(g,n)=(0,3)$}
    \label{fig:(0,3)_0}
\end{figure}

\begin{theorem}
    All critical points of index 1 are shown in Figure \ref{fig:(1,1)_1}, \ref{fig:(0,4)_1}.
\end{theorem}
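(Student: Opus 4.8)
The plan is to recast the statement through eutacticity and then localize every index-$1$ critical point to the two exceptional moduli spaces by means of Lemma \ref{rank=2}. By Theorem \ref{Akrout's theorem}, a surface $X$ is an index-$1$ critical point of the systole function exactly when it is eutactic and $\rank\{\nabla l_\gamma\}_{\gamma\in S(X)}=1$. In that case the length gradients all lie on a single line $\ell$ through the origin, and eutacticity says the origin lies in the relative interior of their convex hull inside $\ell$; since every length gradient is nonzero, this forces two shortest geodesics $\gamma_1,\gamma_2\in S(X)$ whose gradients point in opposite directions along $\ell$, so that $\rank\{\nabla l_1,\nabla l_2\}=1$.

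First I would invoke the contrapositive of Lemma \ref{rank=2}: two distinct shortest geodesics can have rank-$1$ gradients only when $(g,n)(X)\in\{(1,1),(0,4)\}$. Hence every index-$1$ critical point lies in $\mathcal M_{1,1}$ or $\mathcal M_{0,4}$. This single step eliminates all other moduli spaces and is precisely the payoff of the essentiality argument of Section \ref{essentialsection} already packaged into Lemma \ref{rank=2}.

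It then remains to identify the rank-$1$ eutactic points inside these two real $2$-dimensional moduli spaces. In $\mathcal M_{1,1}$ the eutactic points are classical (see \cite{schaller1999systoles}): the hexagonal once-punctured torus carries three shortest geodesics and is the index-$2$ maximum, whereas the square once-punctured torus carries exactly the two shortest geodesics $\alpha,\beta$ with $\nabla l_\alpha+\nabla l_\beta=0$, the unique rank-$1$ eutactic point; this is the surface of Figure \ref{fig:(1,1)_1}. For $\mathcal M_{0,4}$ I would argue in parallel: simple closed geodesics partition the four punctures into pairs, and at a rank-$1$ eutactic point two of them must have equal length and oppositely directed gradients, which by the symmetry of such a configuration forces the distinguished surface of Figure \ref{fig:(0,4)_1}. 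Using that $S(X)$ is a $2$-system (Lemma \ref{intersection=2}) together with the restriction on curves bounding two cusps (Corollary \ref{same two cusps}), one then checks that no further rank-$1$ eutactic point occurs.

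The main obstacle will be the explicit identification and uniqueness inside $\mathcal M_{0,4}$. In $\mathcal M_{1,1}$ one may simply quote the complete classification, but the $(0,4)$ case requires a genuine verification that a rank-$1$ eutactic configuration is realized only by the symmetric surface of Figure \ref{fig:(0,4)_1}, with no other rank-$1$ eutactic points. I expect this to follow by combining the length equalities imposed by eutacticity with the modular description of $\mathcal M_{0,4}$, but it is the one place where case-checking, rather than an appeal to the reduction lemma, is unavoidable.
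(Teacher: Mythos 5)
Your reduction step is exactly the paper's. Theorem \ref{Akrout's theorem} converts index~1 into eutacticity with rank~1; eutacticity forces $\#S(X)\ge 2$ (the convex hull of a single nonzero gradient has empty interior inside its span, so it cannot contain the origin); any two distinct shortest geodesics then span rank~1; and the contrapositive of Lemma \ref{rank=2} localizes $X$ to $\mathcal M_{1,1}$ or $\mathcal M_{0,4}$. This is precisely how the paper disposes of all non-exceptional moduli spaces, and your packaging of that step is correct.

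The gap is the one you flagged yourself: uniqueness inside $\mathcal M_{0,4}$. As written, your argument there is not a proof. Asserting that equal lengths and oppositely directed gradients force, \emph{by the symmetry of such a configuration}, the surface of Figure \ref{fig:(0,4)_1} assumes what must be shown: the equal-length locus of two curve classes is a $1$-dimensional subset of the $2$-dimensional space $\mathcal M_{0,4}$, the symmetric surface is a single point on it, and nothing in your sketch excludes other points of that locus where the two gradients happen to be antiparallel, nor configurations with $\#S(X)=3$ and collinear gradients. Lemma \ref{intersection=2} and Corollary \ref{same two cusps} constrain only the topology of the curve system, not the geometry, so they cannot deliver uniqueness on their own. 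The paper closes both exceptional cases the same way you closed $(1,1)$: Schmutz Schaller's classification in \cite{schaller1999systoles} covers $\mathcal M_{0,4}$ (and also the $(1,2)$ and $(0,5)$ cases needed for index~2), not just $\mathcal M_{1,1}$. So the repair is simply to extend the citation you already invoke; otherwise you must genuinely carry out the computation you postpone, e.g.\ parametrize the equal-length locus in $\mathcal M_{0,4}$ and show that the antiparallelism condition $\nabla l_1+c\,\nabla l_2=0$, $c>0$, has a unique solution on it.
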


\begin{figure}[h]
    \centering
    \includegraphics[width=5cm]{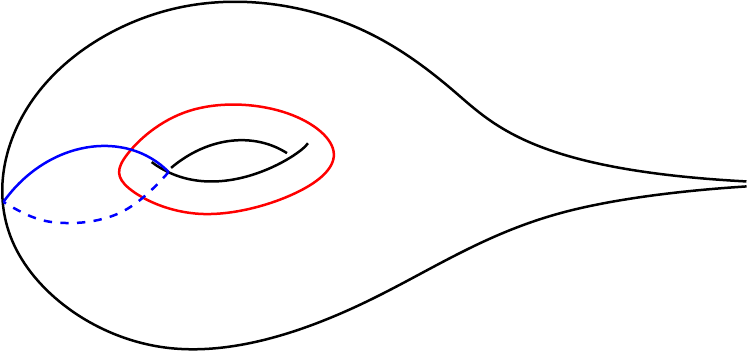}
    \caption{$(g,n)=(1,1),\#S(X)=2$, $\frac{\pi}{2}$-intersection}
    \label{fig:(1,1)_1}
\end{figure}

\begin{figure}[h]
    \centering
    \includegraphics[width=3.5cm]{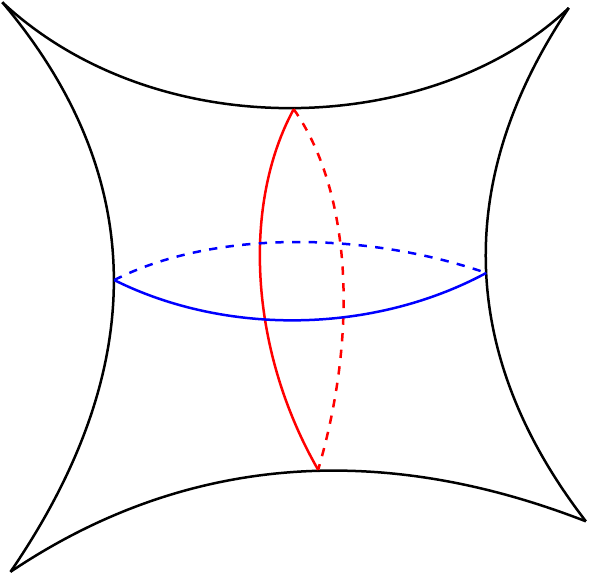}
    \caption{$(g,n)=(0,4),\#S(X)=2$, $\frac{\pi}{2}$-intersection}
    \label{fig:(0,4)_1}
\end{figure}

\begin{theorem}
    All critical points of index 2 are shown in Figure \ref{fig:(1,1)_2}, \ref{fig:(0,4)_2}, \ref{fig:(1,2)_2a}, \ref{fig:(1,2)_2b}, \ref{fig:(0,5)_2}.
\end{theorem}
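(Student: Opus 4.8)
The plan is to first reduce the problem to four moduli spaces and then classify within each. By Theorem \ref{Akrout's theorem} and Theorem \ref{syst}, a critical point of index $2$ is a eutactic point $X$ with $\rank\{\nabla l_\gamma\}_{\gamma\in S(X)}=2$. Since eutacticity of rank $2$ requires the origin to lie in the interior of the convex hull of the gradients inside their $2$-dimensional span, at least three shortest geodesics are needed, and they cannot all lie in a closed half-plane. By Lemma \ref{rank=3}, whenever $(g,n)\notin\{(1,1),(0,4),(1,2),(0,5)\}$ any three distinct curves of $S(X)$ already have rank $3$, forcing index at least $3$; and if $\#S(X)\le 2$ no rank-$2$ eutactic configuration exists at all. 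Hence every index-$2$ critical point lies in one of these four moduli spaces, matching the topological types in the listed figures.

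Next I would dispose of the two real two-dimensional spaces $\mathcal M_{1,1}$ and $\mathcal M_{0,4}$, where rank $2$ is maximal and index-$2$ points are the local maxima of the systole. Here I would enumerate the possible sets $S(X)$ (a $2$-system by Lemma \ref{intersection=2}, in fact a $1$-system for $(1,1)$ by the remark following it) that can be eutactic of full rank; imposing equal lengths on all curves together with eutacticity forces a unique, maximally symmetric surface in each case, recovering Figures \ref{fig:(1,1)_2} and \ref{fig:(0,4)_2} and agreeing with Schmutz Schaller's classification.

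The substantive work is in $\mathcal M_{1,2}$ and $\mathcal M_{0,5}$, of real dimension $4$, where index $2$ is strictly below the maximum. For these I would enumerate the combinatorial types of $S(X)$, using that $S(X)$ is a $2$-system and that curves bounding two cusps are tightly constrained by Lemma \ref{sametwocusps} and Corollary \ref{same two cusps}. For each candidate configuration I would compute $\rank\{\nabla l_\gamma\}_{\gamma\in S(X)}$: the essentiality machinery of Lemma \ref{essential} and Lemma \ref{extra rank} shows that any configuration whose curves fail to be confined to a single essential subsurface forces rank $\ge 3$, so only configurations lying in a small essential subsurface survive, and for the disconnected cases Lemma \ref{lengthgradientdisjoint} pins the rank down directly. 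The configurations with rank exactly $2$ that are moreover eutactic are then identified, and in each the equal-length and symmetry conditions determine a unique surface, yielding Figures \ref{fig:(1,2)_2a}, \ref{fig:(1,2)_2b}, and \ref{fig:(0,5)_2}.

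The main obstacle will be the rank-$2$ bookkeeping in $(1,2)$ and $(0,5)$: one must simultaneously rule out the many configurations whose gradients span three or four dimensions (these would contribute to index $3$ or $4$ rather than $2$) and verify that each surviving rank-$2$ configuration is genuinely eutactic, i.e.\ that the origin lies in the \emph{interior} of the convex hull and not merely in its span. Establishing that each admissible configuration is realized by a \emph{unique} hyperbolic surface, so that the figures are unambiguous, is the other delicate point; I would argue this by showing the imposed symmetry exhibits the surface as a specific orbifold quotient, or equivalently by solving the equal-length constraints explicitly in Fenchel--Nielsen coordinates.
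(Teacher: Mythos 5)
Your opening reduction is exactly the paper's argument: index $2$ means $\rank\{\nabla l_\gamma\}_{\gamma\in S(X)}=2$, eutacticity in a $2$-dimensional span forces $\#S(X)\ge 3$, and Lemma \ref{rank=3} then excludes every $(g,n)\notin\{(1,1),(0,4),(1,2),(0,5)\}$ (you make explicit the ``at least three curves'' point that the paper leaves implicit, which is a small improvement). Where you diverge is in what happens inside the four exceptional moduli spaces: the paper does not carry out that classification at all, but instead appeals to Schmutz Schaller \cite{schaller1999systoles}, where the critical points of the systole function in these small moduli spaces are already known, and the figures simply record that known answer. Your proposal replaces this citation with a direct enumeration --- listing the combinatorial types of $2$-systems that can arise as $S(X)$ (constrained by Lemma \ref{intersection=2}, Lemma \ref{sametwocusps}, and Corollary \ref{same two cusps}), computing ranks via Lemma \ref{essential}, Lemma \ref{extra rank}, and Lemma \ref{lengthgradientdisjoint}, verifying eutacticity, and pinning down uniqueness in Fenchel--Nielsen coordinates. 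That route is more self-contained and, if completed, would make the theorem independent of the literature; but be aware that the part you defer as ``bookkeeping'' in $\mathcal M_{1,2}$ and $\mathcal M_{0,5}$ (ruling out all higher-rank configurations, certifying that the origin lies in the \emph{interior} of the convex hull for each survivor, and proving uniqueness of the realizing surface) is precisely the content that the paper outsources to \cite{schaller1999systoles}, and it is substantial: your proposal as written is a sound plan for it rather than a proof of it.
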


\begin{figure}[h]
    \centering
    \includegraphics[width=5cm]{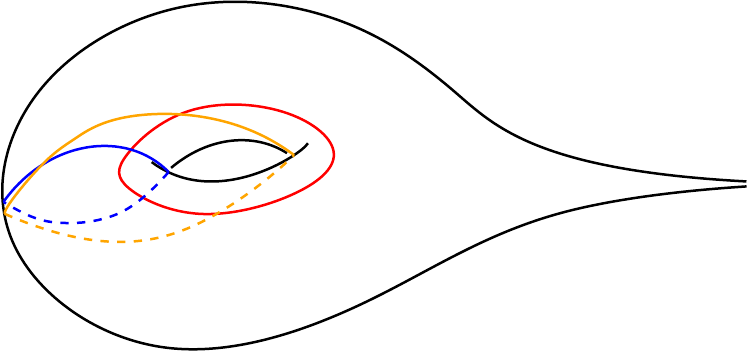}
    \caption{$(g,n)=(1,1),\#S(X)=3$}
    \label{fig:(1,1)_2}
\end{figure}

\begin{figure}[h]
    \centering
    \includegraphics[width=3.5cm]{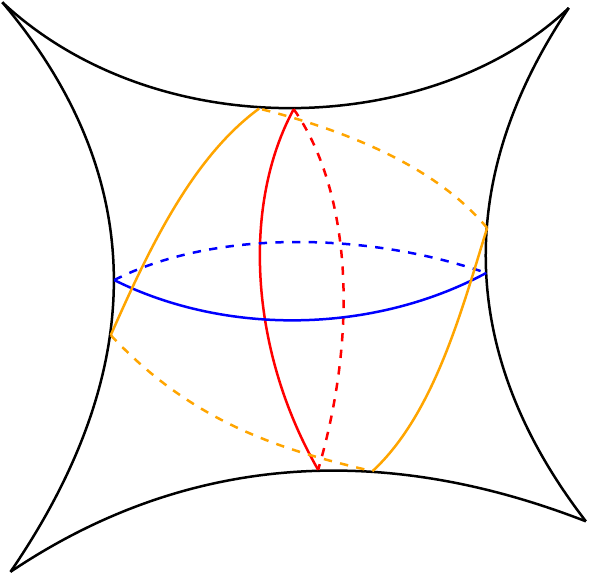}
    \caption{$(g,n)=(0,4),\#S(X)=3$}
    \label{fig:(0,4)_2}
\end{figure}

\begin{figure}[h]
    \centering
    \includegraphics[width=6cm]{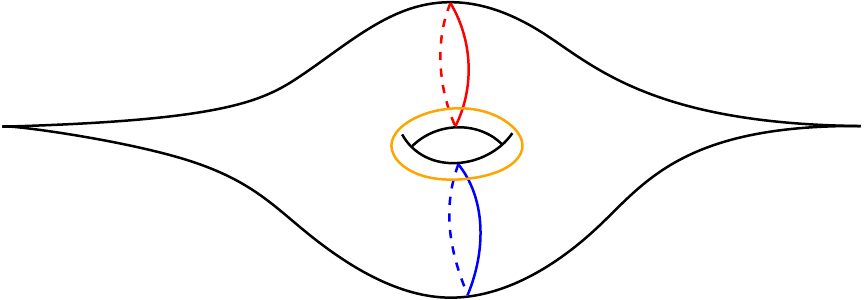}
    \caption{$(g,n)=(1,2),\#S(X)=3$, $\frac{\pi}{2}$-intersections}
    \label{fig:(1,2)_2a}
\end{figure}

\begin{figure}[h]
    \centering
    \includegraphics[width=4cm]{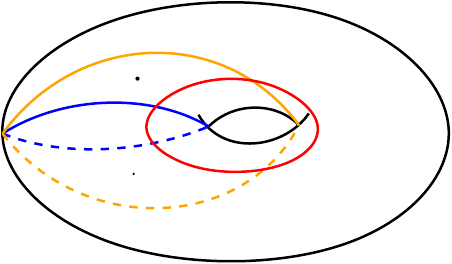}
    \caption{$(g,n)=(1,2),\#S(X)=3$, $\mathbb Z/2$ rotational and $\mathbb Z/3$ permutational symmetry}
    \label{fig:(1,2)_2b}
\end{figure}

\begin{figure}[h]
    \centering
    \includegraphics[width=4cm]{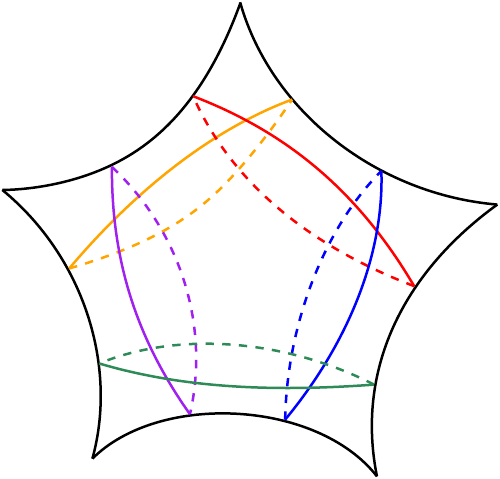}
    \caption{$(g,n)=(0,5),\#S(X)=5$, $\mathbb Z/5$ rotational symmetry}
    \label{fig:(0,5)_2}
\end{figure}

\clearpage
\bibliographystyle{alpha}
\bibliography{main}

\end{document}